\documentclass[12pt,a4paper]{amsart}
\usepackage{fullpage,setspace}
\usepackage[T1]{fontenc}
\usepackage[utf8]{inputenc}
\usepackage{lmodern}
\usepackage{microtype}
\usepackage{mathtools,amssymb,amsthm}
\usepackage{enumitem}
\usepackage{needspace}
\usepackage{tikz}
\usepackage{tikz-cd}
\usepackage[numbers,sort&compress]{natbib}
\definecolor{dark-red}{rgb}{0.5,0.15,0.15}
\usepackage[colorlinks=true,linkcolor=black,citecolor=dark-red,urlcolor=dark-red]{hyperref}
\swapnumbers

\makeatletter
\let\leq\@undefined
\let\geq\@undefined
\let\vec\@undefined
\let\phi\@undefined
\let\epsilon\@undefined
\let\injlim\@undefined
\let\projlim\@undefined
\makeatother
\newcommand{\leq}{\leqslant}
\newcommand{\geq}{\geqslant}
\newcommand{\vec}{\overrightarrow}
\newcommand{\phi}{\varphi}
\newcommand{\epsilon}{\varepsilon}
\newcommand{\injlim}{\varinjlim}
\newcommand{\projlim}{\varprojlim}
\newcommand{\I}{\mathbb I}
\newcommand{\TopDH}{\mathsf{Top}_{\Delta H}}
\newcommand{\TopD}{\mathsf{Top}_{\Delta}}
\newcommand{\Cont}{\mathsf{TOP}}
\newcommand{\Rep}{\mathsf{Rep}_+}

\newcommand{\dP}{\overrightarrow{\mathcal P}}
\newcommand{\dT}{\overrightarrow{\mathcal T}}

\newcommand{\Sp}{\overrightarrow{\mathrm{Sp}}}
\newcommand{\Om}{\overrightarrow{\Omega}}
\newcommand{\Nat}{\mathcal N}
\newcommand{\im}{\operatorname{Im}}

\newcommand{\id}{\operatorname{id}}
\newcommand{\co}{\mathrm{co}}

\theoremstyle{plain}
\newtheorem{theorem}{Theorem}[section]
\newtheorem{proposition}[theorem]{Proposition}
\newtheorem{lemma}[theorem]{Lemma}
\newtheorem{corollary}[theorem]{Corollary}
\newtheorem*{theorem*}{Theorem}
\newtheorem*{corollary*}{Corollary}

\theoremstyle{definition}
\newtheorem{definition}[theorem]{Definition}

\newtheorem{notation}[theorem]{Notation}
\newtheorem{remark}[theorem]{Remark}

\newcommand{\Top}{\mathbf{Top}}
\newcommand{\dTop}{\mathbf{dTop}}
\newcommand{\sdTop}{\mathbf{sdTop}}
\newcommand{\Timed}{\mathbf{Timed}}
\newcommand{\Glob}{\operatorname{Glob}^{\mathrm{top}}}

\newcommand{\dS}{\vec{\mathsf{S}}^{1}}
\newcommand{\Cg}{\mathsf C_{\mathrm{gl}}}

\title{Saturated directed spaces and generalized clocks}
\hypersetup{pdftitle={Saturated directed spaces and generalized clocks},pdfauthor={Philippe Gaucher}}
\author[P. Gaucher]{Philippe Gaucher}
\address{Universit\'e Paris Cit\'e, CNRS, IRIF, F-75013, Paris, France}
\urladdr{\url{https://www.irif.fr/~gaucher}}
\subjclass[2020]{Primary 55P10; Secondary 18C35, 54C35, 68Q85}
\keywords{directed topology, timed space, clock, trace space, saturation, local presentability, globular complex}

\begin{document}
\begin{abstract}
For a general directed space, forgetting the parametrization of paths can change their homotopy type. A clock is a directed space with submetrizable underlying space; a timed space is a saturated directed space equipped with a regular directed map to a clock. Over every fixed clock, timed spaces form a locally presentable category, and their quotients from directed paths with fixed endpoints to traces are trivial Hurewicz fibrations with continuous sections. For Hausdorff saturated directed spaces with metrizable quasicompact subspaces, these quotients are trivial q-fibrations; this includes the second countable case. The directed circle is the cubical clock, but a finite cellular globular example admits no regular map to it. We construct a globular clock from a cellular q-cofibrant replacement of the terminal multipointed $d$-space. It contains the cubical clock by an injective directed map and admits regular maps from all q-cofibrant globular realizations, which are themselves saturated clocks. Thus timed spaces over this clock include both cubical and globular examples. Normalization gives a new proof of the quotient theorem for execution paths of q-cofibrant multipointed $d$-spaces and proves that the quotient by nondecreasing surjections is a trivial Hurewicz fibration with a continuous section.
\end{abstract}
\maketitle
\setcounter{tocdepth}{1}
\tableofcontents
\hypersetup{linkcolor=dark-red}

\section{Introduction}

\subsection*{Presentation} A concurrent program can be represented geometrically by a space of states. A point is a state of the program, and a path is an execution. For example, two coordinates may record the progress of two actions: a square represents their independent execution, whereas restrictions on simultaneous access to a resource may exclude part of that square (Figure~\ref{fig:concurrent-executions}). The order of the actions is essential: an execution cannot in general be reversed. Directed algebraic topology studies such spaces together with distinguished paths that express the allowed direction of execution; see \cite{DAT_book,Grandis}.

\begin{figure}[htbp]
\centering
\begin{tikzpicture}[x=1cm,y=1cm,>=stealth,font=\small]
 \begin{scope}
  \node[align=center] at (1.7,3.9) {Independent actions};
  \fill[gray!5] (0,0) rectangle (3.4,3.4);
  \draw[gray] (0,0) rectangle (3.4,3.4);
  \draw[->] (-.12,0)--(3.8,0) node[right] {$t_1$};
  \draw[->] (0,-.12)--(0,3.7) node[left] {$t_2$};
  \draw[gray!70,dashed] (0,0)--(3.4,3.4);
  \draw[gray!70,dashed] (0,0)--(.8,1.7)--(2.6,2.6)--(3.4,3.4);
  \draw[thick,->] (0,0)--(1.7,.6)--(2.7,1.9)--(3.4,3.4);
  \draw[thick,dark-red,->] (0,0)--(.6,1.7)--(1.9,2.7)--(3.4,3.4);
  \fill (0,0) circle (1.6pt) node[below left] {$u$};
  \fill (3.4,3.4) circle (1.6pt) node[above right] {$v$};
 \end{scope}
 \begin{scope}[xshift=6.3cm]
  \node[align=center] at (1.7,3.9) {A shared resource};
  \fill[gray!5] (0,0) rectangle (3.4,3.4);
  \draw[gray] (0,0) rectangle (3.4,3.4);
  \draw[->] (-.12,0)--(3.8,0) node[right] {$t_1$};
  \draw[->] (0,-.12)--(0,3.7) node[left] {$t_2$};
  \fill[gray!25] (1.1,1.1) rectangle (2.3,2.3);
  \draw[gray,dashed] (1.1,1.1) rectangle (2.3,2.3);
  \node[align=center,font=\scriptsize] at (1.7,1.7) {forbidden\\interior};
  \draw[thick,->] (0,0)--(2.7,.65)--(2.95,2.7)--(3.4,3.4);
  \draw[thick,dark-red,->] (0,0)--(.65,2.7)--(2.7,2.95)--(3.4,3.4);
  \fill (0,0) circle (1.6pt) node[below left] {$u$};
  \fill (3.4,3.4) circle (1.6pt) node[above right] {$v$};
 \end{scope}
\end{tikzpicture}
\caption{Coordinates describe the progress of two actions. Along a directed path, both coordinates are nondecreasing. In the square on the left, the displayed executions can be deformed into one another through directed paths. After removing the open central rectangle on the right, the two displayed paths belong to distinct path components of the directed path space from $u$ to $v$.}
\label{fig:concurrent-executions}
\end{figure}

Grandis' directed spaces specify continuous paths containing the constant paths and closed under nondecreasing changes of parameter and concatenation. The \emph{trace} of a path is its equivalence class under nondecreasing surjective reparametrizations. It records the order of execution while forgetting changes of speed and stop intervals. When speed is not part of the computational information, one would like the path and trace spaces to have the same homotopy type.

The axioms of directed spaces do not ensure this. The paper \cite{DirectedPathTraceQuotient} constructs a saturated directed space with Hausdorff $\Delta$-generated underlying space whose trace space between two distinct points is a square, whereas its directed path space has nontrivial fundamental group. Saturation means that removing stop intervals preserves directed paths. Even under this assumption, the quotient need not be a weak homotopy equivalence.

The paper \cite{clockmap} equips a saturated directed space with a regular map to the directed circle, called the \emph{cubical clock} here. Regularity means that a directed path whose image under the map is constant is itself constant. These maps form a locally presentable category, and their quotients from paths to traces are homotopy equivalences. The construction extends Raussen's cubical naturalization: the sum of the coordinates measures progress in a cube, and reduction modulo integers makes these functions agree on faces \cite{MR2521708}.

The cubical clock does not suffice for globular realizations. Proposition~\ref{prop:no-cubical-clock} constructs a finite cellular multipointed $d$-space with an execution loop $a$ homotopic to $a*_N a$. A regular map to the circle would give this loop a positive winding number $n$ satisfying $n=2n$. We therefore need a more general notion of clock.

A \emph{clock} is a directed space whose underlying space is submetrizable, that is, admits a continuous injective map into a metric space. A \emph{timed space} over a clock $C$ is a saturated directed space $X$ equipped with a regular directed map $p:X\to C$. Morphisms commute with the clock maps. We seek convenient categories for directed topology in two respects: local presentability, for subsequent constructions of model categories, and preservation of homotopy type when passing from paths to traces.

The main theorem also gives the homotopy lifting property for these quotients.

\begin{theorem*}
For every clock $C$, the category $\Timed(C)$ of timed spaces over $C$ is locally presentable. For every object $p:X\to C$ and every $u,v\in|X|$, the quotient map
\[
q_{u,v}:\dP(X)(u,v)\longrightarrow\dT(X)(u,v)
\]
is a trivial Hurewicz fibration, that is, a Hurewicz fibration and a homotopy equivalence. In particular, it is a trivial q-fibration. The trace space is homeomorphic to a strong deformation retract of the directed path space.
\end{theorem*}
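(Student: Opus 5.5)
The plan has two halves: local presentability, then the fibration property via a continuous normalization of paths by clock time.

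\emph{Local presentability.} I would exhibit $\Timed(C)$ as a full reflective (or accessibly embedded, limit-closed) subcategory of the slice $\mathcal D/C$. Here $\mathcal D$ is a locally presentable category of directed spaces over $\Delta$-generated spaces, which I take to be available from the setting of \cite{clockmap}. Saturation and regularity of $p$ are both orthogonality or injectivity conditions with respect to a small set of maps built from segments. For saturation: a directed path that is constant on a subinterval factors through the collapse of that subinterval. For regularity: a directed path whose image in $C$ is constant must be constant. I would check that these classes are closed under limits and $\lambda$-filtered colimits in $\mathcal D/C$. The key point is that regularity is tested on the compact segment $[0,1]$, which is presentable. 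The reflection theorem for small-orthogonality classes then gives local presentability.

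\emph{Fibration property.} The strategy is to normalize every directed path by clock time. Since $|C|$ is submetrizable, fix a continuous injection $|C|\to M$ into a metric space $(M,d)$. For a directed path $\gamma$ in $X$, take the length of $p\circ\gamma$ in $M$, or, more robustly, a continuous ``progress'' functional such as
\[
\ell_\gamma(t)=\sup_{0\le s\le s'\le t} d\bigl(p\gamma(s),p\gamma(s')\bigr)
\]
or a variation-type sum. These quantities are nondecreasing in $t$. By regularity, $\ell_\gamma$ is constant exactly on intervals where $\gamma$ is constant; this uses injectivity of the map into $M$. Normalizing gives a path $\bar\gamma(\ell_\gamma(t)/\ell_\gamma(1))=\gamma(t)$, which is well defined because $\gamma$ is constant where $\ell_\gamma$ is. Saturation guarantees that $\bar\gamma$ is again a directed path. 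When $u=v$ and the path is constant, one treats the constant path separately; by regularity the only trace with zero clock-length is constant.

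The normalization depends only on the trace, so it defines a section $s:\dT(X)(u,v)\to\dP(X)(u,v)$ with $q_{u,v}s=\id$. Next I would build a fibrewise homotopy from $\id$ to $s\,q_{u,v}$ by linearly interpolating the reparametrization: $\gamma_\lambda=\gamma\circ\bigl((1-\lambda)\id+\lambda\,\phi_\gamma\bigr)^{-1}$, suitably interpreted via saturation. This shows that $s\circ q_{u,v}$ is homotopic to the identity over $\dT(X)(u,v)$ and stationary on normalized paths. Hence $s(\dT)$ is a strong deformation retract and $q_{u,v}$ is a homotopy equivalence. A map with a section and a fibrewise deformation of this kind is a trivial Hurewicz fibration by the standard criterion (Dold's fibre homotopy equivalence, or directly: lift homotopies $H$ in $\dT$ as $s\circ H$ and correct the starting point using the fibrewise homotopy). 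Trivial q-fibration then follows immediately.

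\emph{Main obstacle.} The hardest step is continuity of the normalization $\gamma\mapsto\bar\gamma$ and of $s$ in the compact-open (or $\Delta$-ified) topology. The arc length in $M$ is only lower semicontinuous, and even the sup-oscillation functional need not vary continuously with $\gamma$. My first attempt would replace length by a bounded continuous progress function, for example
\[
\ell_\gamma(t)=\int_0^t\!\!\int_0^1 \min\bigl(1,d(p\gamma(r),p\gamma(\sigma))\bigr)\,d\sigma\,dr ,
\]
modified to be strictly increasing off stop intervals. I would prove it is jointly continuous in $(\gamma,t)$, and that its level sets coincide with stop intervals, using regularity plus injectivity into $M$. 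Continuity of the section on the quotient space $\dT$ would then follow from the universal property of the quotient, applied to the continuous invariant map $\gamma\mapsto\bar\gamma$.
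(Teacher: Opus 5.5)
Your architecture matches the paper's: an orthogonality class in a slice, a normalization section, and linear interpolation of reparametrizations. But the step that carries the real content is missing, because none of your progress functions works. You need $\ell_x(t)$ with three properties:
\begin{enumerate}[label=\textup{(\alph*)}]
\item joint continuity on the compact-open path space;
\item equivariance, $\ell_{x\circ\phi}=\ell_x\circ\phi$ for $\phi\in\Rep$, so that the normalized path depends only on the trace;
\item $\ell_x(a)=\ell_x(b)$ if and only if $x|_{[a,b]}$ is constant, so that the normalized factor is well defined and regular.
\end{enumerate}
Your sup-distance functional is the diameter of $p\gamma([0,t])$. It has (a) and (b) but not (c), since it stays constant while the path moves inside the region already covered. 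For instance, for the identity timed space on $\dS\subset\mathbb R^2$, it is constant along a loop once half a turn is completed. Your double integral has only (a). For non-constant $\gamma$ the inner integral is positive at every $r$, so $\ell_\gamma$ increases strictly even on stop intervals. Hence (c) fails, and so does (b), since $\ell_\gamma\circ\phi$ is constant on the fibres of $\phi$. Arc length has (b) and (c) but not (a), and may be infinite. The missing idea is the truncated variation $\operatorname{Var}_\epsilon$. For fixed $\epsilon$, uniform continuity bounds the number of partition intervals with a positive summand, uniformly near a given path. This makes $\operatorname{Var}_\epsilon$ locally Lipschitz for $d_\infty$ (Lemma~\ref{lem:truncated-variation-estimate}). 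It is also equivariant, and it increases strictly across two points at distance $>\epsilon$. The series $\sum_n w_n\theta(\operatorname{Var}_{\epsilon_n}(x,t))$ with $\epsilon_n\to0$ then has (a)--(c). You would also need Lemma~\ref{lem:factor-continuity}, because $\iota\circ p$ need not be an embedding. In addition, continuity at constant paths, where $x\mapsto\lambda_x$ is discontinuous, requires an argument using images.

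Your fibration criterion is also false: a section together with a fibrewise strong deformation retraction onto it does not give a Hurewicz fibration. Take $P=(\I\times\{0\})\cup(\{0\}\times\I)\subset\mathbb R^2$, $T=\I$, $q$ the first projection, $j(t)=(t,0)$, $H((t,0),s)=(t,0)$ and $H((0,y),s)=(0,(1-s)y)$. All your data exist, but the path $s\mapsto s$ has no lift starting at $(0,1)$, so $q$ is not even a q-fibration. Such data give only Dold's weak covering homotopy property: correcting the starting point lifts the homotopy preceded by a stationary segment, not the homotopy itself. The paper instead transports the parametrization of the initial path to the new traces, $K(x,\tau,s)=j(\tau)\circ\eta_{x,s}$ with $\eta_{x,s}$ as in \eqref{eq:homotopy-lift-parameter}. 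It then proves continuity of $K$ on $D_q$ (Proposition~\ref{prop:abstract-hurewicz} and Theorem~\ref{thm:timed-traces}).

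Finally, your local presentability argument fails for non-saturated clocks, which the definition allows. Every orthogonality or injectivity class in $\dTop/C$ contains the terminal object $(C,\id_C)$, and this is not a timed space when $C$ is not saturated. You must first pass to the saturated part $C^{\circ}$, using regularity and saturation of $X$ to get $\Timed(C)\cong\Timed(C^{\circ})$ (Lemma~\ref{lem:saturated-clock-part}). Over a saturated clock your orthogonality description does work.
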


Local presentability gives small limits and colimits and a set of presentable generators. Every homotopy of traces admits a continuous family of representatives extending a prescribed initial family. After fixing metric data on the clock, normalization, its deformation, and the homotopy lifts are natural for morphisms over that clock.

For a Hausdorff saturated directed space whose quasicompact subspaces are metrizable, the quotients from paths to traces are trivial q-fibrations and the trace spaces are $\Delta$-Hausdorff (Theorem~\ref{thm:metrizable-quasicompacts-traces}). The proof applies the clock construction to the compact subspaces determined by compact families of traces. In particular, the result holds for Hausdorff second countable spaces.

Fix a cellular q-cofibrant replacement $\mathbf1^{\mathrm{cell}}\to\mathbf1$ of the final multipointed $d$-space. Its underlying directed space
\[
\Cg=\Sp(\mathbf1^{\mathrm{cell}})
\]
is the \emph{globular clock}. Every q-cofibrant globular realization is a saturated clock and admits a regular map to this fixed $\Cg$. An injective directed map $\dS\to\Cg$ gives a full and faithful functor
\[
\Timed(\dS)\longrightarrow\Timed(\Cg).
\]
Thus $\Timed(\Cg)$ contains both cubical and globular realizations. Restricting normalization to execution paths gives a new proof of \cite[Theorem~16]{Moore3}, including its cofibrancy assertions for both reparametrization categories, and proves the homotopy lifting property for the quotient by nondecreasing surjections.

Local presentability follows from a description as a small-orthogonality class \cite{clockmap}. For normalization, we use truncated variations to construct a continuous progress function with the same stop intervals as the path. This gives a section of the quotient from paths to traces and an explicit homotopy lift; preservation of images proves continuity at constant paths. Regular maps to $\Cg$ are constructed by extending execution-path families along globular attachments and making the extensions regular on interior meridians. Retracts give the q-cofibrant case.

\subsection*{Outline of the paper} Section~\ref{sec:convenient} fixes the categories of spaces, path and trace topologies, saturation, and regularity. Section~\ref{sec:timed-category} proves local presentability of $\Timed(C)$; this argument does not require submetrizability. Section~\ref{sec:metric} constructs the metric progress function from truncated variations. Section~\ref{sec:normalization} proves continuity of normalization in the source topology, identifies traces with normalized paths, and establishes the homotopy lifting property. It also treats invariant families of paths. Section~\ref{sec:cubical} applies the construction to precubical sets. Section~\ref{sec:globular} constructs the common globular clock, compares it with the cubical clock, and proves the quotient theorem for execution paths. Section~\ref{sec:without-global-clocks} treats Hausdorff saturated directed spaces with metrizable quasicompact subspaces by restricting to compact families of traces. Section~\ref{sec:functorial-normalization} proves naturality over a fixed clock and describes the natural splitting, the category of normalized paths, and the action of the interval monoid.

\subsection*{Acknowledgments}

The use of truncated variations in Section~\ref{sec:metric} emerged from discussions with GPT-6 Astra. Their key role is to provide a substitute for a length function, whose absence precludes a direct generalization of Raussen's technique from \cite{MR2521708}: in particular, the function $x\mapsto L_x$ in \eqref{eq:multiscale-metric-clock} is \textit{not} a length function. GPT-6 Astra also suggested proving that the path-to-trace quotient map is a trivial Hurewicz fibration in the case of the existence of a \textit{global} clock. Section~\ref{sec:without-global-clocks} precisely treats the case of the \textit{local} existence of clocks, which was the initial goal of this work. The remainder of the paper adapts and generalizes techniques and results of \cite{Moore3,clockmap,GlobularNaturalSystem,MR2521708}.

\section{Spaces, directed paths, and traces}
\label{sec:convenient}

We write $\I=[0,1]$, $\mathsf D^n$ for the closed unit disk in $\mathbb R^n$, and $\mathsf S^{n-1}=\partial\mathsf D^n$, with $\mathsf D^0=\{0\}$ and $\mathsf S^{-1}=\varnothing$. A compact space is quasicompact and Hausdorff. Throughout, $\Top$ denotes either the category $\TopD$ of $\Delta$-generated spaces or its full subcategory $\TopDH$ of $\Delta$-Hausdorff $\Delta$-generated spaces. A space is $\Delta$-Hausdorff if the image of every continuous map $\I\to Y$ is closed. Both categories are locally presentable and cartesian closed; see \cite[Corollary~3.7]{FR} and \cite[Section~2 and Appendix~B]{leftproperflow}. Statements apply to both categories unless otherwise specified.

For an ordinary topological space $Y$, its $\Delta$-kelleyfication $k_\Delta Y$ has the final topology with respect to continuous maps from standard topological simplices to $Y$. We write
\[
\Cont_\Delta(A,B)=k_\Delta\Cont_\co(A,B),
\qquad A\times_\Delta B=k_\Delta(A\times B),
\]
where $\Cont_\co$ denotes the compact-open topology. Relative topologies are taken before $\Delta$-kelleyfication. In particular, an arbitrary subspace of a $\Delta$-generated space is not assumed to be $\Delta$-generated.

For maps of spaces, $i:A\hookrightarrow B$ denotes a \emph{topological embedding}: $i$ induces a homeomorphism from $A$ onto $i(A)$ equipped with the relative topology from $B$. A \emph{closed embedding} is an embedding with closed image; closedness is stated separately.

We use the q-model structure on $\Top$, whose weak equivalences are weak homotopy equivalences. Its fibrations are the maps with the right lifting property with respect to $\mathsf D^n\times\{0\}\subset\mathsf D^n\times\I$ for every $n\geq0$. The generating cofibrations are the inclusions $\mathsf S^{n-1}\subset\mathsf D^n$ for $n\geq0$; see \cite[Section~2 and Appendix~B]{leftproperflow}. A map is a trivial q-fibration if and only if it has the right lifting property with respect to the inclusions $\mathsf S^{n-1}\subset\mathsf D^n$ for all $n\geq0$.

\begin{lemma}
\label{lem:interval-product}
If $Z$ is $\Delta$-generated, the ordinary product $Z\times\I$ is $\Delta$-generated. Consequently the homotopies used below are ordinary continuous homotopies as well as homotopies in $\Top$.
\end{lemma}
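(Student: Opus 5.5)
We must show that the ordinary product topology on $Z\times\I$ already agrees with its $\Delta$-kelleyfication; the key fact is that $\I$ is locally compact Hausdorff. The plan is to use the exponential law for locally compact spaces together with the fact that $\Delta$-generated spaces are exactly the quotients of disjoint sums of simplices.

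First, I write $Z$ as a quotient $\pi:\coprod_{i}\Delta^{n_i}\to Z$ of a disjoint sum of standard simplices, using the continuous maps $\Delta^n\to Z$. This is possible precisely because $Z$ carries the final topology for such maps.

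Next, I invoke the classical Whitehead lemma: for $K$ locally compact Hausdorff, the functor $-\times K$ on ordinary spaces preserves quotient maps, because $-\times K$ is left adjoint to $\Cont_\co(K,-)$. Hence
\[
\pi\times\id_\I:\coprod_i\Delta^{n_i}\times\I\to Z\times\I
\]
is a quotient map in ordinary topology, and the product with $\I$ also commutes with the disjoint sum.

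Each $\Delta^{n_i}\times\I$ is a compact polyhedron. It is therefore $\Delta$-generated: triangulate the prism into simplices, and note that a finite closed cover by simplices gives a final topology. So $Z\times\I$ is a quotient of a sum of $\Delta$-generated spaces, which is $\Delta$-generated because the class of $\Delta$-generated spaces is closed under colimits computed in ordinary topological spaces.

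Consequently $k_\Delta(Z\times\I)=Z\times\I$, and so $Z\times_\Delta\I=Z\times\I$. A homotopy in $\Top$, namely a continuous map $Z\times_\Delta\I\to Y$, is then exactly an ordinary continuous map $Z\times\I\to Y$. For the $\TopDH$ case nothing changes, since the product is formed in the same way and the same underlying space results.

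The only genuinely nontrivial input is the preservation of quotients by $-\times\I$. This is the step to cite carefully, via local compactness of $\I$, rather than reprove; the rest is bookkeeping.
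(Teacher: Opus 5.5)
Your proposal is correct and follows essentially the same route as the paper: present $Z$ as a quotient of a coproduct of simplices, use that product with the locally compact Hausdorff space $\I$ preserves quotient maps, triangulate each prism $\Delta^n\times\I$, and conclude because ordinary quotients of $\Delta$-generated spaces are $\Delta$-generated. Your extra remarks about $Z\times_\Delta\I=Z\times\I$ and the $\TopDH$ case just spell out the ``consequently'' clause that the paper leaves implicit.
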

\begin{proof}
Present $Z$ as the final quotient of the coproduct of the simplices indexed by all continuous maps from simplices to $Z$. Product with the locally compact Hausdorff space $\I$ preserves quotient maps. Each product of a simplex with $\I$ is $\Delta$-generated, for instance by a finite triangulation. Ordinary final quotients of $\Delta$-generated spaces are $\Delta$-generated. This proves the assertion.
\end{proof}

\begin{definition}
A \emph{directed space} $X=(|X|,dX)$ consists of a space $|X|\in\Top$ and a set $dX$ of continuous maps $\I\to|X|$ containing all constant paths and closed under nondecreasing continuous reparametrizations and normalized composition
\[
(x*_Ny)(t)=
\begin{cases}x(2t),&0\leq t\leq\tfrac12,\\
y(2t-1),&\tfrac12\leq t\leq1,
\end{cases}
\quad x(1)=y(0).
\]
A morphism preserves directed paths. The category of directed spaces is denoted by $\dTop$.
\end{definition}

The directed interval $\vec\I$ has underlying space $\I$ and all continuous nondecreasing paths.

Let $\Rep$ be the monoid of continuous nondecreasing surjections $\I\to\I$, with its uniform, equivalently compact-open, topology. Its elements fix both endpoints. It is a convex, metrizable, locally path-connected space, hence is $\Delta$-generated.

\begin{definition}
A directed space is \emph{saturated} if, for every continuous $x:\I\to|X|$ and every $\phi\in\Rep$, the condition $x\circ\phi\in dX$ implies $x\in dX$. We denote the full subcategory of saturated directed spaces by $\sdTop$. A morphism $p:X\to C$ is \emph{regular} if $p\circ x$ constant implies $x$ constant for every $x\in dX$.
\end{definition}

These are the conventions of \cite[Section~2]{clockmap}. A \emph{regular path} is a non-constant path with no constant nondegenerate subinterval.

\begin{lemma}
\label{lem:regular-stops}
If $p:X\to C$ is regular and $x$ is directed, then
\[
(p\circ x)|_{[a,b]}\text{ is constant}
\quad\Longleftrightarrow\quad
x|_{[a,b]}\text{ is constant}
\]
for every $0\leq a\leq b\leq1$.
\end{lemma}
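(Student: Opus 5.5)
The reverse implication is immediate: if $x|_{[a,b]}$ is constant, then so is $p\circ x|_{[a,b]}$. The plan therefore concentrates on the forward direction. The idea is to reparametrize the restriction $x|_{[a,b]}$ into a directed path on all of $\I$ and then invoke regularity of $p$ on that path.

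The degenerate case $a=b$ is trivial, since a map on a single point is constant. So assume $a<b$. Define the affine nondecreasing map
\[
\psi:\I\to\I,\qquad \psi(t)=a+t(b-a),
\]
and set $y=x\circ\psi$. This $y$ is not of the form $x\circ\phi$ with $\phi\in\Rep$, because $\psi$ is not surjective. However, the definition of a directed space only requires closure under nondecreasing continuous reparametrizations, not surjective ones. If the definition is read to include non-surjective nondecreasing maps $\I\to\I$, as in Grandis' convention, then $y\in dX$ directly.

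The step I expect to need care is checking that this reading is the intended one. If only surjective reparametrizations were allowed, I would recover $y\in dX$ differently. Write $x$ as the normalized composition of three pieces, using further reparametrizations $\phi\in\Rep$ that collapse $[0,a]$ and $[b,1]$. Alternatively, I would note that restricting to subintervals is standard for directed spaces (it is one of the usual axioms, equivalent to closure under all nondecreasing maps $\I\to\I$). I will simply use the stated closure under nondecreasing continuous reparametrizations, which covers $\psi$.

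With $y\in dX$ in hand, the argument closes quickly. The path $p\circ y=(p\circ x)\circ\psi$ is the constant map, because $(p\circ x)|_{[a,b]}$ is constant and $\psi$ takes values in $[a,b]$. Regularity of $p$ then gives that $y$ is constant. Since $\psi$ maps $\I$ onto $[a,b]$, it follows that $x|_{[a,b]}$ is constant, which proves the lemma.
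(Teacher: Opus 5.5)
Your proof is correct and matches the paper's proof, which applies regularity to $t\mapsto x(a+(b-a)t)$; this path is directed because the paper's definition of a directed space allows arbitrary, not necessarily surjective, nondecreasing reparametrizations, and the converse and the case $a=b$ are immediate. Your fallback via $\Rep$ and concatenation is unnecessary, and it would not work as sketched: surjective reparametrizations and concatenation alone never extract a proper subpath.
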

\begin{proof}
For $a<b$, apply regularity to the path $t\mapsto x(a+(b-a)t)$. The converse and the case $a=b$ are immediate.
\end{proof}

\begin{lemma}
\label{lem:regular-generators}
Suppose that a family $\mathcal A\subseteq dX$ generates the directed structure of $X$ under nondecreasing reparametrizations, concatenation, and addition of constant paths. A directed map $p:X\to C$ is regular if and only if, for every $x\in\mathcal A$ and $0\leq a\leq b\leq1$,
\[
(p\circ x)|_{[a,b]}\text{ constant}
\quad\Longrightarrow\quad x|_{[a,b]}\text{ constant}.
\]
\end{lemma}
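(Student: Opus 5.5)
The forward implication is immediate from Lemma~\ref{lem:regular-stops}, since $\mathcal A\subseteq dX$. For the converse, I would introduce the class $\mathcal B\subseteq dX$ of directed paths $x$ such that, for all $0\leq a\leq b\leq1$, constancy of $(p\circ x)|_{[a,b]}$ forces constancy of $x|_{[a,b]}$. By hypothesis $\mathcal A\subseteq\mathcal B$. If I show that $\mathcal B$ contains the constant paths and is closed under nondecreasing reparametrization and normalized composition, then $\mathcal B=dX$, because $\mathcal B$ contains $\mathcal A$ and $dX$ is generated by $\mathcal A$ under exactly these operations. Regularity then follows by taking $[a,b]=\I$.

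Constant paths lie in $\mathcal B$ trivially. For reparametrization, let $x\in\mathcal B$ and let $\phi:\I\to\I$ be continuous and nondecreasing. Suppose $p\circ x\circ\phi$ is constant on $[a,b]$. Then $\phi([a,b])=[\phi(a),\phi(b)]$ is an interval by monotonicity and continuity. The map $p\circ x$ is constant on this interval, so $x$ is constant on it, and hence $x\circ\phi$ is constant on $[a,b]$.

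For concatenation, let $x,y\in\mathcal B$ and suppose $p\circ(x*_N y)$ is constant on $[a,b]$. I split $[a,b]$ into $[a,b]\cap[0,\tfrac12]$ and $[a,b]\cap[\tfrac12,1]$. On each nonempty piece, rescaling gives an interval on which $p\circ x$, respectively $p\circ y$, is constant. Then $x$, respectively $y$, is constant there. The two pieces share the point $\tfrac12$ whenever both are nonempty, so the two constant values agree and $x*_N y$ is constant on $[a,b]$.

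The main point needing care is what ``generates'' means. If it means iterated closure under these operations starting from $\mathcal A$ together with constant paths, the closure argument suffices. If the generation also uses nondecreasing maps that are not surjective, or finite concatenations written with a different parametrization, these reduce to the cases above, since such a concatenation is a reparametrization of iterated normalized compositions. I would therefore phrase the argument with $\mathcal B$ closed under the full list of operations, which covers every reading of the hypothesis.
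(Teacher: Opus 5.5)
Your proposal is correct and follows essentially the same route as the paper's proof. The paper likewise takes the class of directed paths satisfying the implication on every subinterval. It checks that this class contains the constant paths, is closed under nondecreasing reparametrization (images of closed intervals are closed intervals) and under concatenation (splitting at the concatenation point), and concludes by taking $[a,b]=\I$.
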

\begin{proof}
Necessity follows from Lemma~\ref{lem:regular-stops}. For sufficiency, consider all directed paths satisfying the displayed implication on every subinterval. They contain the constant paths and are closed under nondecreasing reparametrization, since the image of a closed interval is a closed interval. They are closed under concatenation by splitting an interval at the concatenation point. Thus they contain $dX$, and taking $[a,b]=\I$ proves regularity.
\end{proof}

For $u,v\in|X|$, let
\begin{align*}
\dP_\co(X)(u,v)=\{x\in dX\mid x(0)=u,\ x(1)=v\}
&\subseteq\Cont_\co(\I,|X|),\\
\dP(X)(u,v)&=k_\Delta\dP_\co(X)(u,v).
\end{align*}
Trace equivalence is the equivalence relation generated by $x\sim x\circ\phi$ for $\phi\in\Rep$. We use the same relation on all continuous paths when no directed structure is specified. Equivalent paths have the same image and endpoints. The \emph{trace space} $\dT(X)(u,v)$ is the set of equivalence classes with the ordinary quotient topology of $\dP(X)(u,v)$, and $q_{u,v}$ is the quotient map. This quotient is automatically $\Delta$-generated. Under our hypotheses it will also be $\Delta$-Hausdorff when $|X|$ is $\Delta$-Hausdorff; no reflection of the quotient is needed.

\section{The category over a fixed clock}
\label{sec:timed-category}

\begin{definition}
A \emph{clock} is a directed space $C$ for which $|C|$ is \emph{submetrizable}: there is a continuous injective map into a metric space, or equivalently a metrizable topology on the same set coarser than its given topology. The metric and the injection are not part of the data. A clock is not required to be saturated.

A \emph{timed space over $C$} is a regular directed map $p:X\to C$ with $X$ saturated. A morphism from $p:X\to C$ to $q:Y\to C$ is a directed map $h:X\to Y$ such that $qh=p$. These objects and morphisms form the category $\Timed(C)$.
\[
\begin{tikzcd}[column sep=large]
 X\arrow[rr,"h"]\arrow[dr,"p"']&&Y\arrow[dl,"q"]\\
 &C&
\end{tikzcd}
\]
\end{definition}

The proof of local presentability does not require the submetrizability of the clock. Lemma~\ref{lem:saturated-clock-part} reduces the proof to the case of a saturated clock.

\begin{lemma}
\label{lem:trace-splitting}
Trace equivalence on continuous paths is compatible with concatenation. Moreover, if $z\sim a*_N b$, then there is $r\in\I$ such that the two restrictions of $z$ to $[0,r]$ and $[r,1]$, after a linear reparametrization to $\I$, are trace equivalent to $a$ and $b$, respectively.  A restriction to a degenerate interval means the corresponding constant path.
\end{lemma}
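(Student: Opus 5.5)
Both assertions will be proved by induction along a chain of elementary moves. Trace equivalence is the equivalence relation generated by $x\sim x\circ\phi$ with $\phi\in\Rep$, so $z\sim a*_N b$ means there is a finite zigzag
\[
z=z_0,\ z_1,\ \dots,\ z_n=a*_N b
\]
in which each step has the form $z_{i+1}=z_i\circ\phi$ or $z_i=z_{i+1}\circ\phi$ for some $\phi\in\Rep$.

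For compatibility with concatenation, it suffices to treat one elementary move, since transitivity then handles the rest. Suppose $x'=x\circ\phi$ with $\phi\in\Rep$. Then $x'*_Ny=(x*_Ny)\circ\psi$, where $\psi(t)=\phi(2t)/2$ on $[0,\tfrac12]$ and $\psi(t)=t$ on $[\tfrac12,1]$. This $\psi$ is continuous, nondecreasing and surjective, because $\phi(1)=1$. The case of a move on the right factor is symmetric. Combining one move in each factor gives $x*_Ny\sim x'*_Ny'$.

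For the splitting statement, I would prove a stronger claim by induction along the zigzag read from $a*_N b$ back to $z$: each $z_i$ admits a split point $r_i$ whose two rescaled restrictions are trace equivalent to $a$ and $b$. The base case is $r_n=\tfrac12$. There are two kinds of step.

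\begin{itemize}
\item \emph{Precomposition step} $w'=w\circ\phi$, with $w$ split at $r$. Set $r'=\min\phi^{-1}(r)$, which exists because $\phi^{-1}(r)$ is a nonempty closed interval. Then $\phi$ maps $[0,r']$ onto $[0,r]$ and $[r',1]$ onto $[r,1]$, nondecreasingly and surjectively. After linear rescaling this gives elements of $\Rep$, so the restrictions of $w'$ are reparametrizations of those of $w$. This requires care with degenerate intervals. If $r'=0$, then $r=\phi(0)=0$, the left restriction is the constant path $w(0)$, and both sides are constant. If $r'<1$ but $r=1$, the right piece of $w'$ is constant equal to $w(1)$, matching the degenerate right piece of $w$.
\item \emph{Inverse step} $w=w'\circ\phi$, with $w$ split at $r$ and $r'$ to be found. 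Set $r'=\phi(r)$. Then $\phi$ restricted to $[0,r]$ and to $[r,1]$ maps onto $[0,r']$ and $[r',1]$. Hence the rescaled restrictions of $w$ are reparametrizations of those of $w'$, with degenerate cases handled as before.
\end{itemize}

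The main obstacle is the degenerate bookkeeping. When $r\in(0,1)$ and $\phi$ collapses $[0,r]$ to a point, a nondegenerate interval maps to a degenerate one. The restriction of $w$ to $[0,r]$ is then constant, and it must be compared with the constant path given by the convention. Reparametrization by a surjection onto a point is not an element of $\Rep$, so this case must be checked separately. The check succeeds because a restriction of $w=w'\circ\phi$ over a collapsed interval is literally constant.

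To keep the relation to $a$ and $b$ transitive, note that the collapse occurs only when $\phi(r)\in\{0,1\}$. In that case the restricted piece is constant, so the corresponding piece of $a*_Nb$ is trace equivalent to a constant path. This is consistent, since the relation is an equivalence relation on paths and constant paths of the same point are equivalent. Chaining the equivalences along the zigzag then yields the required $r$ for $z$.
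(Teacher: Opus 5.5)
Your proposal is correct and follows essentially the same route as the paper. Compatibility is checked on one elementary move in each factor. For the splitting, you follow the zigzag from $a*_N b$ starting at the division point $\tfrac12$, moving it to a point of $\phi^{-1}(r)$ (you take the minimum, whereas the paper allows any point) or to $\phi(r)$, and degenerate pieces are handled by the observation that both paths concerned are constant.
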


\begin{proof}
The concatenation of two endpoint-preserving nondecreasing reparametrizations, after a linear reparametrization onto the two half-intervals, belongs to $\Rep$.  This proves compatibility with concatenation for an elementary identification, and hence for the equivalence relation it generates.

For the second assertion, start with the division point $1/2$ of $a*_N b$ and follow a finite zigzag of elementary identifications from $a*_N b$ to $z$.  Under the replacement $c\mapsto c\circ\phi$, move a division point $r$ to any point of $\phi^{-1}(r)$.  Under the reverse replacement $c\circ\phi\mapsto c$, move a division point $r$ to $\phi(r)$.  On both sides of the division point the corresponding restrictions differ by endpoint-preserving nondecreasing reparametrizations after a linear reparametrization.  When an interval degenerates, both paths concerned are constant.  Thus the trace classes of the two pieces are preserved along the zigzag.
\end{proof}

\begin{lemma}
\label{lem:saturated-clock-part}
For any directed space $C$, let $C^{\circ}$ have the same underlying space as $C$ and directed paths
\[
 d(C^{\circ})=
 \{c\in\Top(\I,|C|)\mid
       \text{every continuous path trace equivalent to $c$ belongs to $dC$}\}.
\]
Then $C^{\circ}$ is a saturated directed space and the identity on underlying spaces defines a regular directed map $\iota_C:C^{\circ}\to C$.  Every regular directed map $p:X\to C$ with $X$ saturated factors uniquely through $\iota_C$ as a regular directed map $p^{\circ}:X\to C^{\circ}$.
\[
\begin{tikzcd}[column sep=large]
 X\arrow[rr,dashed,"p^{\circ}"]\arrow[dr,"p"']&&C^{\circ}\arrow[dl,"\iota_C"]\\
 &C&
\end{tikzcd}
\]
\end{lemma}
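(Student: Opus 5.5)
We check the directed-space axioms for $d(C^{\circ})$, then saturation, then regularity of $\iota_C$, and finally the factorization. The key observation is that $d(C^{\circ})$ is a union of whole trace classes contained in $dC$. Indeed, if $c\in d(C^\circ)$ and $c'\sim c$, then every path trace equivalent to $c'$ is trace equivalent to $c$, so it lies in $dC$; hence $c'\in d(C^\circ)$. Also $d(C^\circ)\subseteq dC$, because $c\sim c$.

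\emph{Directed-space axioms.} A path trace equivalent to a constant path is constant, since equivalent paths have the same image. Constant paths lie in $dC$, so they lie in $d(C^\circ)$. For $c\in d(C^\circ)$ and a nondecreasing continuous $\psi:\I\to\I$ that is not surjective, I factor $\psi$ as an affine nondecreasing map onto $[a,b]=\psi(\I)$ composed with an element of $\Rep$. Then $c\circ\psi$ is a reparametrization of the restriction $c|_{[a,b]}$. I need every path trace equivalent to $c\circ\psi$ to lie in $dC$. Such a path $z$ satisfies $z\sim c'$ with $c'=c|_{[a,b]}$ linearly reparametrized, so it suffices to show $c'\in d(C^\circ)$.

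Write $c\sim c_1*_N c' *_N c_2$ up to the obvious bracketing, where $c_1$ and $c_2$ are the linear reparametrizations of $c|_{[0,a]}$ and $c|_{[b,1]}$. By compatibility of trace equivalence with concatenation (Lemma~\ref{lem:trace-splitting}), any $z\sim c'$ gives $c_1*_N z*_N c_2\sim c$, which therefore lies in $dC$. Restricting to the middle piece and reparametrizing shows $z\in dC$, because $dC$ is closed under nondecreasing reparametrization; restriction to a subinterval followed by an affine map is such a reparametrization. Degenerate intervals are handled by the convention that restriction to a degenerate interval is the constant path.

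For concatenation, let $a,b\in d(C^\circ)$ and $z\sim a*_N b$. By the second part of Lemma~\ref{lem:trace-splitting}, there is $r$ such that the pieces $z_1,z_2$ of $z$ on $[0,r]$ and $[r,1]$ satisfy $z_1\sim a$ and $z_2\sim b$. Hence $z_1,z_2\in dC$. Then $z$ is a nondecreasing reparametrization of $z_1*_N z_2\in dC$, via the piecewise linear map sending $r$ to $1/2$ (a constant piece if $r\in\{0,1\}$). So $z\in dC$.

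\emph{Saturation and regularity.} Saturation is immediate: if $x\circ\phi\in d(C^\circ)$ with $\phi\in\Rep$, then $x\sim x\circ\phi$ and trace classes are preserved, so $x\in d(C^\circ)$. The identity map is directed because $d(C^\circ)\subseteq dC$. It is regular trivially, since $\iota_C\circ c$ constant means $c$ is constant.

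\emph{Factorization.} Uniqueness is clear because $\iota_C$ is the identity on underlying sets, so $p^\circ$ must be $p$ on underlying spaces. It remains to show that $p\circ x\in d(C^\circ)$ for $x\in dX$; regularity of $p^\circ$ then follows from regularity of $p$. Let $w\sim p\circ x$, linked by a zigzag of elementary identifications. I induct along the zigzag, maintaining $w_k=p\circ x_k$ with $x_k\in dX$.

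A step $w_k\mapsto w_k\circ\phi$ is handled by $x_k\circ\phi$. The reverse step $w_k=c\circ\phi\mapsto c$ is the main obstacle, because $c$ must be lifted through $p$. For this I use regularity. By Lemma~\ref{lem:regular-stops}, $x_k$ is constant exactly on the intervals where $p\circ x_k=c\circ\phi$ is constant. In particular $x_k$ is constant on every fiber $\phi^{-1}(s)$, which is a closed interval on which $c\circ\phi$ is constant. Hence $x_k=y\circ\phi$ for a unique function $y$. This $y$ is continuous because $\phi$ is a closed surjection from a compact space, hence a quotient map. Saturation of $X$ then gives $y\in dX$, and $p\circ y=c$. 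Thus $w=p\circ x_n$ with $x_n\in dX$, so $w\in dC$, as required.
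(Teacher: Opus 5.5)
Your proof is correct and follows essentially the same route as the paper. Both arguments use invariance of $d(C^{\circ})$ under trace equivalence, then closure under restrictions by substituting a trace-equivalent middle piece via Lemma~\ref{lem:trace-splitting}, and closure under concatenation by splitting $z$ at a division point. The factorization is then proved by induction along a zigzag, where the reverse step $c\circ\phi\mapsto c$ is lifted through the quotient map $\phi$ using Lemma~\ref{lem:regular-stops} and saturation of $X$.
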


\begin{proof}
The defining condition contains constant paths and depends only on the trace class. It is therefore preserved by $\Rep$-reparametrization and its converse.

For closure under restrictions, let $c$ satisfy the condition and let $c_{[a,b]}$ be its linearly reparametrized subpath, with $a<b$. If $v\sim c_{[a,b]}$, replacing this subpath by $v$ gives a path trace equivalent to $c$, by Lemma~\ref{lem:trace-splitting}. The resulting path, and hence its subpath $v$, is directed in $C$. Degenerate restrictions are constant. Since every nondecreasing reparametrization factors through its image interval, this also proves closure under arbitrary nondecreasing reparametrizations.

If $a,b$ satisfy the condition and $z\sim a*_N b$, Lemma~\ref{lem:trace-splitting} divides $z$ into paths trace equivalent to $a,b$, both directed in $C$. Their concatenation, followed by a linear change on each interval, is $z$; if an interval degenerates, only the other piece remains. Thus $C^{\circ}$ is a saturated directed space. The identity $\iota_C$ is directed and injective, hence regular.

For the factorization statement, let $x$ be directed in $X$. We show that every path trace equivalent to $p\circ x$ is the image of a directed path of $X$. An elementary forward reparametrization lifts by reparametrization of $x$. For the reverse step, suppose $p\circ x=y\circ\phi$ with $\phi\in\Rep$:
\[
\begin{tikzcd}[column sep=large,row sep=large]
 \I\arrow[r,"x"]\arrow[d,two heads,"\phi"']&{|X|}\arrow[d,"p"]\\
 \I\arrow[r,"y"']\arrow[ur,dashed,"x'"]&{|C|}.
\end{tikzcd}
\]
Lemma~\ref{lem:regular-stops} makes $x$ constant on each fibre of $\phi$. Since $\phi$ is a quotient map, $x=x'\phi$ for a unique continuous $x'$. Saturation makes $x'$ directed, and surjectivity gives $px'=y$. Induction along a finite zigzag of trace identifications proves that $p$ takes directed paths into $d(C^{\circ})$. The identity underlying $\iota_C$ gives uniqueness and regularity of the factorization.
\end{proof}

\begin{remark}
The directed structure of $C^{\circ}$ is the largest saturated directed structure contained in $dC$. In particular, $C^{\circ}=C$ when $C$ is saturated, and $C^{\circ}$ is a clock whenever $C$ is a clock. The factorization statement applies to regular maps from saturated spaces; it does not assert a coreflection on all directed spaces.
\end{remark}

\begin{theorem}
\label{thm:timed-locally-presentable}
For every clock $C$, the category $\Timed(C)$ is locally presentable. Every morphism in this category is regular.
\end{theorem}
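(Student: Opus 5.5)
First, the regularity assertion is immediate. Let $h:(p:X\to C)\to(q:Y\to C)$ be a morphism and let $x\in dX$ with $h\circ x$ constant. Then $p\circ x=q\circ h\circ x$ is constant, so $x$ is constant because $p$ is regular. For local presentability we first reduce to a saturated clock. By Lemma~\ref{lem:saturated-clock-part}, composition with $\iota_C$ gives a functor $\Timed(C^{\circ})\to\Timed(C)$. Unique factorization through $\iota_C$ gives an inverse on objects. On morphisms, a directed map $h$ with $qh=p$ automatically satisfies $q^{\circ}h=p^{\circ}$, since $\iota_C$ is the identity on underlying sets. Hence $\Timed(C)\simeq\Timed(C^{\circ})$, and we may assume $C$ is saturated.

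With $C$ saturated, we identify $\Timed(C)$ with a full subcategory of the slice $\dTop/C$. That slice is locally presentable, because $\dTop$ is locally presentable: it is a category of concrete structures over the locally presentable $\Top$, cut out by closure conditions, as used in \cite{clockmap}. Slices of locally presentable categories are locally presentable. It then suffices to show that the objects $p:X\to C$ with $X$ saturated and $p$ regular form a small-orthogonality class in $\dTop/C$. Such a class is reflective and locally presentable by the Adámek--Rosický theorem.

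Next we write down the orthogonality conditions. Saturation asks that, for each $\phi\in\Rep$, every continuous $x$ with $x\circ\phi$ directed is itself directed. This is orthogonality with respect to the maps $\I_\phi\to\I^{\phi}$. Here $\I_\phi$ is the directed space on $\I$ generated by the path $\phi$, and $\I^{\phi}$ is $\I$ with its identity path declared directed. Over $C$, each such map must be indexed by all ways of mapping the target to $C$. There are only set-many of these, since $\Rep$ and $\Top(\I,|C|)$ are sets. Regularity asks that a directed path with constant image in $C$ be constant. This is orthogonality against the quotient $\vec\I\to\{*\}$, taken over the constant maps to $C$. A directed map $\vec\I\to X$ over a constant point factors uniquely through the point exactly when the path is constant. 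Orthogonality is automatic for uniqueness, because $\vec\I\to\{*\}$ is epi. Again there are set-many constant maps.

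The main obstacle is to verify that these orthogonality conditions capture exactly saturated objects. One must check that the liftings in $\dTop/C$ are compatible with the clock map, namely that the unique extension $x$ lies over $C$. This should follow because $\phi$ is epi in $\Top$. One must also confirm that the colimit or quotient constructions involving $\I^{\phi}$ exist in $\dTop$ with the intended underlying space. I would follow the small-orthogonality description of \cite{clockmap}, substituting $C$ for the directed circle. That argument never uses any property of the circle beyond its being a directed space, and this explains why submetrizability is not needed.
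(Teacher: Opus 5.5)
Your argument is correct and follows the paper's skeleton: the same one-line proof that morphisms are regular, the same reduction $\Timed(C)\cong\Timed(C^{\circ})$ via Lemma~\ref{lem:saturated-clock-part}, the same test maps $\vec\I_c\to\mathbf1_c$ over the constant maps, and the same appeal to Ad\'amek--Rosick\'y. The difference is the ambient category. The paper works inside $\sdTop/C$ and quotes local presentability of $\sdTop$ from \cite[Theorem~2.4]{clockmap}, so only regularity has to be expressed by orthogonality. You work in $\dTop/C$ and also encode saturation, via the maps $\I_\phi\to\vec\I$ indexed by $\phi\in\Rep$ and $c\in dC$. Your $\I^{\phi}$ is simply $\vec\I$, so no colimit needs to be checked. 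This inlines the fact that saturation is a small-orthogonality condition and makes $\Timed(C)$ reflective in all of $\dTop/C$. The price is an extra family of test maps and reliance on local presentability of $\dTop$ rather than $\sdTop$.

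The check you single out as the main obstacle is not the real one: the extension has the same underlying map, so it automatically lies over $C$. What needs to be said is where saturation of $C$ enters your route. A morphism from $\I_\phi\to\vec\I\xrightarrow{c}C$ to $(X,p)$ requires $c=px$ to be directed in $C$, so your conditions only test paths $x$ with $px\in dC$. If $x\phi\in dX$, then $px\phi\in dC$, and $px\in dC$ follows precisely because $C$ is saturated. Without the preliminary reduction to $C^{\circ}$ the encoding is wrong: for a non-saturated clock $C$, the object $\id_C$ satisfies all your orthogonality conditions but is not saturated. In the paper's route, saturation of $C$ is needed only to place $C$ in $\sdTop$.
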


\begin{proof}
Lemma~\ref{lem:saturated-clock-part} gives an isomorphism of categories
\[
 \Timed(C)\cong\Timed(C^{\circ}).
\]
We may therefore suppose that $C$ is saturated.

The category $\sdTop$ is locally presentable for either choice of $\Top$ by \cite[Theorem~2.4]{clockmap}, and so is its slice $\mathcal A=\sdTop/C$. For a set $\mathcal S$ of morphisms in $\mathcal A$, write $\mathcal S^\perp$ for the full subcategory of objects $Z$ such that precomposition with each $A\to B$ in $\mathcal S$ induces a bijection $\mathcal A(B,Z)\to\mathcal A(A,Z)$. Such a small-orthogonality class is locally presentable by \cite[Theorem~1.39]{TheBook}.

For every $c\in|C|$, let $\vec\I_c\to C$ and $\mathbf1_c\to C$ be the constant maps at $c$, and let
\[
 \rho_c:\vec\I_c\longrightarrow\mathbf1_c
\]
be the unique map in $\mathcal A$.  For an object $p:X\to C$, precomposition gives
\[
 \mathcal A(\mathbf1_c,p)\longrightarrow
 \mathcal A(\vec\I_c,p).
\]
This map sends a point of the fibre $p^{-1}(c)$ to its constant path. It is injective and is surjective if and only if every directed path in that fibre is constant.  Therefore
\[
\begin{tikzcd}[column sep=large,row sep=large]
 \vec\I_c\arrow[r,"x"]\arrow[d,"\rho_c"']&X\arrow[d,"p"]\\
 \mathbf1_c\arrow[r,"c"']\arrow[ur,dashed,"\exists!"]&C,
\end{tikzcd}
\]
and
\[
 \Timed(C)=\{\rho_c\mid c\in|C|\}^{\perp}
 \quad\text{inside }\mathcal A.
\]
This proves local presentability. The same test for constant paths in each fibre is used for the directed circle in \cite[Propositions~3.3--3.4 and Corollary~3.5]{clockmap}; here the points $c\in|C|$ replace the points of the circle.

Finally, if $f:(X,p)\to(Y,q)$ is a morphism of timed spaces and $f\circ x$ is constant for a directed path $x$, then $p\circ x=q\circ f\circ x$ is constant.  Regularity of $p$ makes $x$ constant, so $f$ is regular.
\end{proof}

\section{A continuous metric progress function}
\label{sec:metric}

Let $(Y,d)$ be a metric space. We seek a continuous nondecreasing function with the same constant subintervals as a path $x:\I\to Y$, as in parametrization by arc length. The total variation
\[
\sup_{0=t_0<\cdots<t_r=t}
\sum_{i=1}^{r}d(x(t_{i-1}),x(t_i))
\]
can be infinite. Even paths of finite length can converge uniformly to a constant path without their lengths converging to zero; see \cite[Remark~2.8]{MR2521708}. Thus arc length does not give a continuous function on the space of all paths.

We use the \emph{truncated variation} studied by {\L}ochowski \cite[p.~122 and Section~2]{LochowskiTruncatedVariation}. For $\epsilon>0$ and $t\in\I$, put
\[
\operatorname{Var}_{\epsilon}(x,t)=
\sup_{0=t_0<\cdots<t_r=t}
\sum_{i=1}^{r}
\bigl(d(x(t_{i-1}),x(t_i))-\epsilon\bigr)_{+},
\qquad a_{+}=\max(a,0).
\]
By convention, $\operatorname{Var}_{\epsilon}(x,0)=0$. A distance $r\leq\epsilon$ contributes zero, and a distance $r>\epsilon$ contributes $r-\epsilon$ (Figure~\ref{fig:truncated-variation}). This is $TV^\epsilon(x,[0,t])$ in \cite[p.~122, definition following (1.1)]{LochowskiTruncatedVariation}: adding the endpoints to the partitions used there adds nonnegative summands and gives the same supremum.

\begin{figure}[!htbp]
\centering
\begin{tikzpicture}[x=1cm,y=1cm,>=stealth,font=\small]
 \begin{scope}
  \node at (2.1,3.35) {One partition interval};
  \draw[->] (0,0)--(4.5,0) node[right] {$t$};
  \draw[->] (0,0)--(0,2.85) node[left] {$x(t)$};
  \draw[thick] (.2,.55)--(.45,.8)--(.7,.55)--(.95,.8)--(1.2,.55)--(1.5,.8)--(2.65,2.4)--(3.6,2.4);
  \draw[densely dashed,gray] (1.5,0)--(1.5,.8)--(3.95,.8);
  \draw[densely dashed,gray] (2.65,0)--(2.65,2.4)--(3.95,2.4);
  \draw[densely dashed,gray] (3.65,1.4)--(3.95,1.4);
  \fill (1.5,.8) circle (1.5pt);
  \fill (2.65,2.4) circle (1.5pt);
  \node[below] at (1.5,0) {$t_{i-1}$};
  \node[below] at (2.65,0) {$t_i$};
  \draw[<->,gray] (3.8,.8)--(3.8,1.4) node[midway,right] {$\epsilon$};
  \draw[<->,thick,dark-red] (3.8,1.4)--(3.8,2.4) node[midway,right] {$r-\epsilon$};
  \node at (2.1,-.8) {$r=|x(t_i)-x(t_{i-1})|>\epsilon$};
 \end{scope}
 \begin{scope}[xshift=7cm]
  \node at (1.85,3.35) {The contribution to the sum};
  \draw[->] (0,0)--(4.15,0) node[right] {$r$};
  \draw[->] (0,0)--(0,2.85);
  \draw[dashed,gray] (0,0)--(2.65,2.65) node[above right] {$r$};
  \draw[very thick,dark-red] (0,0)--(1.05,0)--(3.6,2.55) node[right] {$(r-\epsilon)_+$};
  \draw (1.05,.07)--(1.05,-.07) node[below] {$\epsilon$};
  \node[below left] at (0,0) {$0$};
  \node at (1.85,-.8) {$(r-\epsilon)_+=\max(r-\epsilon,0)$};
 \end{scope}
\end{tikzpicture}
\caption{A summand of the truncated variation. On the left, $r=|x(t_i)-x(t_{i-1})|>\epsilon$, and the colored segment has length $r-\epsilon$. On the right, the dashed line is $r$ and the colored graph is $(r-\epsilon)_+$.}
\label{fig:truncated-variation}
\end{figure}

For fixed $\epsilon$, uniform continuity bounds the number of positive summands, independently of the partition. The following estimate gives both finiteness and continuity in the path variable.

Since $\I$ is compact, the compact-open topology on $\Cont_\co(\I,Y)$ is induced by the uniform metric
\[
d_\infty(y,z)=\sup_{r\in\I}d(y(r),z(r)).
\]

\begin{lemma}
\label{lem:truncated-variation-estimate}
For every $\epsilon>0$, the truncated variation $\operatorname{Var}_\epsilon(x,t)$ is finite. For each path $x_0$, there are an open neighborhood $U$ of $x_0$ and an integer $M\geq1$ such that
\[
\left|\operatorname{Var}_\epsilon(y,t)-\operatorname{Var}_\epsilon(z,t)\right|
\leq2M d_\infty(y,z)
\qquad(y,z\in U,\ t\in\I).
\]
\end{lemma}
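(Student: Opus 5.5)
We will bound the number of positive summands in any partition uniformly over a neighbourhood of $x_0$, and then compare sums termwise. By compactness of $\I$, the path $x_0$ is uniformly continuous, so there is $\delta>0$ with $d(x_0(s),x_0(s'))\leq\epsilon/4$ whenever $|s-s'|\leq\delta$. Take $U$ to be the open $d_\infty$-ball of radius $\epsilon/4$ around $x_0$. For $y\in U$ and $|s-s'|\leq\delta$, the triangle inequality gives
\[
d(y(s),y(s'))\leq d(y(s),x_0(s))+d(x_0(s),x_0(s'))+d(x_0(s'),y(s'))<\tfrac34\epsilon<\epsilon.
\]
Hence, in any partition $0=t_0<\dots<t_r=t$, a summand $(d(y(t_{i-1}),y(t_i))-\epsilon)_+$ can be positive only if $t_i-t_{i-1}>\delta$. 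The intervals are disjoint, so there are at most $M:=\lceil1/\delta\rceil$ such intervals, with $M\geq1$. Each summand is at most $\operatorname{diam}y(\I)<\infty$, because $y(\I)$ is compact. This proves finiteness for $x_0$, and since $x_0$ is arbitrary, it proves finiteness for every path.

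For the estimate, we will not compare two arbitrary partitions; we will compare the sums over the \emph{same} partition. Fix $y,z\in U$, $t\in\I$ and a partition $P$. The function $a\mapsto a_+$ is $1$-Lipschitz, and
\[
|d(y(t_{i-1}),y(t_i))-d(z(t_{i-1}),z(t_i))|\leq 2d_\infty(y,z).
\]
So each summand for $y$ differs from the corresponding summand for $z$ by at most $2d_\infty(y,z)$. Moreover, a summand is nonzero for $y$ or for $z$ only on an interval of length $>\delta$, so at most $M$ indices have a nonzero summand for either path. The remaining indices contribute zero on both sides.

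This gives
\[
\sum_P(\cdots)_y\leq\sum_P(\cdots)_z+2Md_\infty(y,z)\leq\operatorname{Var}_\epsilon(z,t)+2Md_\infty(y,z).
\]
Taking the supremum over $P$ and then exchanging $y$ and $z$ yields the stated inequality. The case $t=0$ is trivial by the convention $\operatorname{Var}_\epsilon(\cdot,0)=0$.

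The one delicate point is the bound on the count of potentially nonzero indices. It must be uniform over $U$ and over all partitions, and it must hold simultaneously for $y$ and $z$. A naive count of positive summands for each path separately would only bound each set of indices by $M$, giving the constant $4M$ rather than $2M$. The choice of $\delta$ from $x_0$ alone, with the radius $\epsilon/4$, handles this: the set of long intervals (length $>\delta$) is the same for both paths and has at most $M$ elements, and every index outside it contributes zero for both $y$ and $z$.
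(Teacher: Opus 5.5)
Your proof is correct and follows essentially the same route as the paper. Uniform continuity of $x_0$ and the uniform $\epsilon/4$-ball give a criterion depending only on the partition (interval length exceeding $\delta$) for a summand to be nonzero for any path in $U$. This bounds the relevant indices by $M$ simultaneously for $y$ and $z$, and the termwise $2d_\infty(y,z)$ estimate finishes after taking suprema. The differences are cosmetic: your constants ($\epsilon/4$ versus $\epsilon/2$ for the modulus of continuity, $\lceil 1/\delta\rceil$ versus $\lfloor 1/\delta\rfloor$), and your bound via $\operatorname{diam}y(\I)$ instead of the paper's uniform bound $M(D+\epsilon/2)$ for finiteness.
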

\begin{proof}
Choose $0<\delta\leq1$ such that $|s-t|<\delta$ implies $d(x_0(s),x_0(t))<\epsilon/2$, and let $U$ be the open uniform $\epsilon/4$-ball about $x_0$. For $y\in U$ and $|s-t|<\delta$,
\[
\begin{aligned}
d(y(s),y(t))
&\leq d(y(s),x_0(s))+d(x_0(s),x_0(t))+d(x_0(t),y(t))\\
&<\epsilon/4+\epsilon/2+\epsilon/4=\epsilon.
\end{aligned}
\]
Thus a positive summand requires a partition interval of length at least $\delta$. There are at most $M=\lfloor\delta^{-1}\rfloor$ such intervals in any partition of $[0,t]$, since their lengths have sum at most $1$.

For a partition $\pi=(0=t_0<\cdots<t_r=t)$, write
\[
S_\epsilon(y,\pi)=\sum_{i=1}^r\bigl(d(y(t_{i-1}),y(t_i))-\epsilon\bigr)_+.
\]
At $t=0$, use the partition with the single point $0$ and sum zero. The image of $x_0$ is compact, so $D=\sup_{s,t\in\I}d(x_0(s),x_0(t))<\infty$. For $y\in U$, all distances are at most $D+\epsilon/2$, and therefore
\[
0\leq S_\epsilon(y,\pi)\leq M(D+\epsilon/2).
\]
Taking the supremum proves finiteness, since $x_0$ was arbitrary.

The inequality $|(a-\epsilon)_+-(b-\epsilon)_+|\leq|a-b|$ and the triangle inequality give
\[
\begin{aligned}
\left|\bigl(d(y(s),y(t))-\epsilon\bigr)_+-\bigl(d(z(s),z(t))-\epsilon\bigr)_+\right|
&\leq\left|d(y(s),y(t))-d(z(s),z(t))\right|\\
&\leq d(y(s),z(s))+d(y(t),z(t))\\
&\leq2d_\infty(y,z).
\end{aligned}
\]
For $y,z\in U$, both summands vanish on partition intervals of length less than $\delta$. Hence the difference of the partition sums has at most $M$ nonzero terms, and
\[
S_\epsilon(y,\pi)\leq S_\epsilon(z,\pi)+2M d_\infty(y,z)
\leq\operatorname{Var}_\epsilon(z,t)+2M d_\infty(y,z).
\]
Taking the supremum over $\pi$ and interchanging $y$ and $z$ proves the estimate.
\end{proof}

\begin{proposition}
	\label{prop:truncated-variation}
For every $\epsilon>0$, the function
	\[
	\operatorname{Var}_{\epsilon}:
	\Cont_{\co}(\I,Y)\times\I\longrightarrow[0,\infty[
	\]
is finite and continuous.  It has the following properties.
	\begin{enumerate}[label=\textup{(\roman*)}]
\item The function $t\mapsto\operatorname{Var}_{\epsilon}(x,t)$ is nondecreasing.
\item If $0\leq a\leq b\leq1$, then
		\[
		\operatorname{Var}_{\epsilon}(x,a)
		=\operatorname{Var}_{\epsilon}(x,b)
		\]
whenever $x$ is constant on $[a,b]$.  Moreover, if there are $s,t\in[a,b]$ with $s<t$ and $d(x(s),x(t))>\epsilon$, then
		\[
		\operatorname{Var}_{\epsilon}(x,a)
		<\operatorname{Var}_{\epsilon}(x,b).
		\]
\item For every $\phi\in\Rep$ and $t\in\I$,
		\[
		\operatorname{Var}_{\epsilon}(x\circ\phi,t)
		=\operatorname{Var}_{\epsilon}(x,\phi(t)).
		\]
	\end{enumerate}
\end{proposition}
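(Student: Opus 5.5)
Finiteness is already contained in Lemma~\ref{lem:truncated-variation-estimate}, and the Lipschitz estimate there gives continuity in the path variable uniformly in $t$. Joint continuity therefore reduces to continuity of $t\mapsto\operatorname{Var}_\epsilon(x_0,t)$ for each fixed $x_0$. Indeed,
\[
|\operatorname{Var}_\epsilon(y,t)-\operatorname{Var}_\epsilon(x_0,t_0)|\leq 2Md_\infty(y,x_0)+|\operatorname{Var}_\epsilon(x_0,t)-\operatorname{Var}_\epsilon(x_0,t_0)|,
\]
so it suffices to prove continuity in $t$. I would prove the basic properties first and then return to continuity in $t$.

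For (i), I extend a partition of $[0,s]$ by the point $t>s$. This adds a nonnegative summand, so $\operatorname{Var}_\epsilon(x,s)\leq\operatorname{Var}_\epsilon(x,t)$.

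For (iii), a partition $\pi$ of $[0,t]$ is pushed to $\phi(\pi)$, deleting repeated points. The sum $S_\epsilon(x\circ\phi,\pi)$ equals $S_\epsilon(x,\phi(\pi))$, because repeated points contribute distance $0$. Conversely, since $\phi$ is a surjection of $\I$ fixing $0$, every partition of $[0,\phi(t)]$ lifts along $\phi$ to a partition of $[0,t]$. Here I choose preimages monotonically and take $t$ itself as the lift of $\phi(t)$; this works because $\phi([0,t])=[0,\phi(t)]$. The two suprema therefore agree.

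For (ii), the strict part is easy. Given $s<t$ in $[a,b]$ with $d(x(s),x(t))>\epsilon$, I take a partition of $[0,a]$ whose sum is within $\eta$ of the supremum. I then append the points $s$ and $t$, and $b$ if needed, after replacing $a$ by $s$. Monotonicity of partition sums under refinement fails for truncated variation, so this step needs care. Instead I use the superadditivity $\operatorname{Var}_\epsilon(x,b)\geq\operatorname{Var}_\epsilon(x,a)+\operatorname{TV}^\epsilon(x,[a,b])$, which follows by concatenating partitions. Together with $\operatorname{TV}^\epsilon(x,[a,b])\geq d(x(s),x(t))-\epsilon>0$, this gives the strict inequality.

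The equality part of (ii) is the delicate point. Take a partition of $[0,b]$ containing points in $[a,b]$, where $x$ is constant. Replacing all such points by the single point $a$ does not change any distance except through the last point. That distance equals $d(x(t_j),x(a))$, because $x(b)=x(a)$. So the sum is realized by a partition of $[0,a]$, which gives $\leq$, and (i) gives the reverse inequality.

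The main obstacle is continuity in $t$ for fixed $x$. Right and left limits exist by (i). For left continuity at $t$, I take a near-optimal partition of $[0,t]$ and replace its last point $t$ by $t'<t$ close to $t$. By uniform continuity and the estimate $|(a-\epsilon)_+-(b-\epsilon)_+|\leq|a-b|$, the sum changes by at most $d(x(t'),x(t))$, which tends to $0$. For right continuity at $t$, I fix $\eta<\epsilon$ and choose $t'>t$ with $d(x(r),x(t))<\eta$ for $r\in[t,t']$. For a near-optimal partition of $[0,t']$, I look at its last point $t_k\leq t$ and the points in $]t,t']$. All summands with both endpoints in $[t,t']$ vanish, since their distances are below $2\eta$. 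I take $2\eta\leq\epsilon$, which holds after shrinking $\eta$. The one interval crossing $t$ is replaced by the interval ending at $t$, losing at most $\eta$. The final point $t'$ handling is similar. Hence $\operatorname{Var}_\epsilon(x,t')\leq\operatorname{Var}_\epsilon(x,t)+2\eta$, which proves right continuity.
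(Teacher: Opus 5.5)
Your proof is correct. For (i)--(iii) it follows the paper. For joint continuity it takes a different route.

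The paper never treats $t\mapsto\operatorname{Var}_\epsilon(x,t)$ separately. It rescales partitions of $[0,t]$ to partitions of $\I$, which gives $\operatorname{Var}_\epsilon(x,t)=\operatorname{Var}_\epsilon(x\circ\alpha(t),1)$ with $\alpha(t)(r)=tr$. It then notes that $(x,t)\mapsto x\circ\alpha(t)$ is continuous for the compact-open topology, and concludes from the continuity of $\operatorname{Var}_\epsilon(-,1)$ given by Lemma~\ref{lem:truncated-variation-estimate}.

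You instead combine two ingredients:
\begin{itemize}
\item the estimate of that lemma, which is uniform in $t$;
\item a direct proof of left and right continuity in $t$ for a fixed path.
\end{itemize}
Both one-sided arguments are sound. For the left limit, take $t'$ strictly between the penultimate point of your fixed near-optimal partition and $t$. For the right limit, every summand with both endpoints in $[t,t']$ vanishes, and the crossing summand loses less than $\eta$. So you actually get $\operatorname{Var}_\epsilon(x,t')\leq\operatorname{Var}_\epsilon(x,t)+\eta$, and the point $t'$ needs no separate treatment.

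The rescaling trick is shorter because it absorbs the time variable into the path variable, so the lemma is the only analytic input. Your route costs an extra $\epsilon$--$\delta$ argument, but it makes the one-sided behaviour in $t$ explicit.

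Two small wording points:
\begin{itemize}
\item In the strict part of (ii), the concern about refinement is unnecessary. Appending $s,t,b$ after the endpoint $a$ only adds nonnegative summands, which is exactly the paper's argument and amounts to your superadditivity.
\item In the equality part of (ii), the crossing summand is unchanged because the first partition point in $[a,b]$ has value $x(a)$ by constancy on $[a,b]$, not because $x(b)=x(a)$.
\end{itemize}
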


\begin{proof}
For joint continuity, let $\alpha:\I\to\Cont_\co(\I,\I)$ be the continuous map defined by $\alpha(t)(r)=tr$. The map $R(x,t)=x_t=x\circ\alpha(t)$ is continuous, since it is the composite of $\id\times\alpha$ with composition of maps for the compact-open topology.
\Needspace{12\baselineskip}
For $t>0$, multiplication by $t$ gives a bijection between partitions of $\I$ and partitions of $[0,t]$, with equal sums for $x_t$ and $x$. For $t=0$, both sums are zero. Thus the following diagram commutes:
\[
\begin{tikzcd}[column sep=large,row sep=large]
\Cont_\co(\I,Y)\times\I
  \arrow[r,"R"]\arrow[dr,"\operatorname{Var}_\epsilon"']
&\Cont_\co(\I,Y)\arrow[d,"{\operatorname{Var}_\epsilon(-,1)}"]\\
&{[0,\infty[}
\end{tikzcd}
\]
The right-hand map is continuous by Lemma~\ref{lem:truncated-variation-estimate}, which proves joint continuity.

For (i), extending a partition of $[0,a]$ by $b>a$ adds a nonnegative summand. Taking suprema proves monotonicity.

For (ii), suppose first that $x$ is constant on $[a,b]$. In any partition of $[0,b]$, replace all points after $a$ by $a$ and delete repetitions. The resulting partition of $[0,a]$ has the same sum: the value of $x$ at each replaced point is $x(a)$, and the deleted summands are zero. Taking suprema and using (i) gives equality. If instead $a\leq s<t\leq b$ and $d(x(s),x(t))>\epsilon$, extend any partition of $[0,a]$ by $s,t,b$, omitting repetitions. Taking suprema gives
\[
\begin{aligned}
\operatorname{Var}_\epsilon(x,b)
&\geq\operatorname{Var}_\epsilon(x,a)+d(x(s),x(t))-\epsilon\\
&>\operatorname{Var}_\epsilon(x,a).
\end{aligned}
\]

For (iii), consider the commutative diagram
\[
\begin{tikzcd}[column sep=large]
{[0,t]}\arrow[r,two heads,"\phi"]\arrow[dr,"x\circ\phi"']
&{[0,\phi(t)]}\arrow[d,"x"]\\
&Y
\end{tikzcd}
\]
If $\phi(t)=0$, both variations are zero. Otherwise, applying $\phi$ to a partition of $[0,t]$ and deleting repetitions gives a partition of $[0,\phi(t)]$ with the same sum. Conversely, for a partition $0=u_0<\cdots<u_r=\phi(t)$, surjectivity gives $s_i\in[0,t]$ with $\phi(s_i)=u_i$, taking $s_0=0$ and $s_r=t$. Monotonicity gives $0=s_0<\cdots<s_r=t$, so this is a partition with the same sum. Taking suprema proves (iii).
\end{proof}

For a fixed $\epsilon$, truncated variation can vanish on a non-constant path. We therefore use a sequence $\epsilon_n\to0$ and combine the resulting functions in a uniformly convergent series.

\begin{notation}
	\label{not:multiscale-profile}
Consider a triple $\mathfrak a=((\epsilon_n)_{n\geq1},(w_n)_{n\geq1},\theta)$ such that
	\begin{enumerate}[label=\textup{(\roman*)}]
\item $\epsilon_n>0$ for every $n$ and $\epsilon_n\to0$;
\item $w_n>0$ for every $n$ and $\sum_{n\geq1}w_n<\infty$;
\item $\theta:[0,\infty[\to[0,1[$ is continuous and strictly increasing, with $\theta(0)=0$.
	\end{enumerate}
For example, one may take
	\[
	\epsilon_n=2^{-n},\qquad w_n=2^{-n},\qquad
	\theta(r)=\frac{r}{1+r}.
	\]
\end{notation}

Fix $\mathfrak a$ as in Notation~\ref{not:multiscale-profile}. Define
\begin{equation}
	\label{eq:multiscale-metric-clock}
	L_x(t)=\sum_{n\geq1}w_n\,
	\theta\bigl(\operatorname{Var}_{\epsilon_n}(x,t)\bigr).
\end{equation}

\Needspace{14\baselineskip}
\begin{proposition}
	\label{prop:multiscale-aggregation}
The series in \eqref{eq:multiscale-metric-clock} defines a continuous map
	\[
	(x,t)\longmapsto L_x(t):
	\Cont_{\co}(\I,Y)\times\I\longrightarrow[0,\infty[.
	\]
For each $x$, the function $L_x$ is nondecreasing, $L_x(0)=0$, and
	\begin{align}
		L_{x\circ\phi}(t)&=L_x(\phi(t)),
		\label{eq:clock-equivariance}\\
		L_x(a)=L_x(b)&\quad\Longleftrightarrow\quad
		x|_{[a,b]}\text{ is constant}.
		\label{eq:clock-stop-detection}
	\end{align}
Here $\phi\in\Rep$ and $0\leq a\leq b\leq1$.  In particular, $L_x(1)>0$ if and only if $x$ is non-constant.
\end{proposition}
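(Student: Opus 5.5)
Continuity comes first. Each term $(x,t)\mapsto w_n\theta(\operatorname{Var}_{\epsilon_n}(x,t))$ is continuous on $\Cont_\co(\I,Y)\times\I$, because Proposition~\ref{prop:truncated-variation} gives continuity of $\operatorname{Var}_{\epsilon_n}$ and $\theta$ is continuous. Since $0\leq\theta<1$, the $n$-th term is bounded by $w_n$, and $\sum w_n<\infty$. The Weierstrass M-test then gives uniform convergence on the whole product, so $L$ is a uniform limit of continuous functions and hence continuous. It is also finite, bounded by $\sum w_n$.

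Monotonicity of $L_x$ follows termwise from Proposition~\ref{prop:truncated-variation}(i) together with monotonicity of $\theta$. Next, $L_x(0)=0$ because $\operatorname{Var}_\epsilon(x,0)=0$ by convention and $\theta(0)=0$. Equivariance \eqref{eq:clock-equivariance} is termwise Proposition~\ref{prop:truncated-variation}(iii), summed over $n$.

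For \eqref{eq:clock-stop-detection}, the direction ``constant implies equal'' is termwise Proposition~\ref{prop:truncated-variation}(ii). For the converse, suppose $x|_{[a,b]}$ is not constant; then $a<b$. Since $x|_{[a,b]}$ is not constant, it is not constantly equal to $x(a)$, so there is $t\in\,]a,b]$ with $x(a)\neq x(t)$. Set $s=a$ and $\delta=d(x(s),x(t))>0$. Since $\epsilon_n\to0$, choose $n$ with $\epsilon_n<\delta$. The strict part of Proposition~\ref{prop:truncated-variation}(ii) gives $\operatorname{Var}_{\epsilon_n}(x,a)<\operatorname{Var}_{\epsilon_n}(x,b)$, and strict monotonicity of $\theta$ makes the $n$-th term strictly larger at $b$. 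All other terms are nondecreasing by (i), and $w_n>0$, so $L_x(a)<L_x(b)$. The final assertion is the case $[a,b]=\I$ combined with $L_x(0)=0$.

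No step is a serious obstacle. The only point needing care is that uniform convergence must hold on the whole non-compact product for continuity, and the bound $\theta<1$ provides exactly that; this is the reason $\theta$ is taken bounded. The detection argument uses the fact that $\epsilon_n\to0$, which is why a single truncation level would not suffice.
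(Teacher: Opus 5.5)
Your proposal is correct and follows the paper's proof essentially step for step. Continuity comes from uniform convergence, using the bound $w_n$ on the $n$th term. Monotonicity, $L_x(0)=0$ and equivariance are checked term by term via Proposition~\ref{prop:truncated-variation}. Stop detection picks $n$ with $\epsilon_n$ below a positive distance $d(x(s),x(t))$ inside $[a,b]$; your choice $s=a$ is a harmless specialization of that step.
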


\begin{proof}
The $n$th summand is bounded by $w_n$, and for $M>N$ therefore
	\[
	0\leq\sum_{n=N+1}^{M}w_n
	\theta\bigl(\operatorname{Var}_{\epsilon_n}(x,t)\bigr)
	\leq\sum_{n>N}w_n.
	\]
Since $\sum_n w_n<\infty$, the series converges uniformly on $\Cont_{\co}(\I,Y)\times\I$. Each summand is continuous by Proposition~\ref{prop:truncated-variation}, so the sum is continuous. Monotonicity, the value at zero, and \eqref{eq:clock-equivariance} follow term by term.

If $x$ is constant on $[a,b]$, every summand has the same value at $a$ and $b$, so $L_x(a)=L_x(b)$. If $x$ is not constant on $[a,b]$, choose $a\leq s<t\leq b$ with $d(x(s),x(t))>0$ and then $n$ with $\epsilon_n<d(x(s),x(t))$. Proposition~\ref{prop:truncated-variation}(ii) gives
	\[
	\operatorname{Var}_{\epsilon_n}(x,a)
	<\operatorname{Var}_{\epsilon_n}(x,b).
	\]
All other summands are nondecreasing, while $w_n>0$ and $\theta$ is strictly increasing. Therefore
\[
L_x(b)-L_x(a)\geq w_n\bigl(\theta(\operatorname{Var}_{\epsilon_n}(x,b))-\theta(\operatorname{Var}_{\epsilon_n}(x,a))\bigr)>0.
\]
This proves \eqref{eq:clock-stop-detection}. Taking $a=0$ and $b=1$ gives the final assertion.
\end{proof}

\Needspace{9\baselineskip}
\begin{remark}[Morse's $\mu$-length]
\label{rem:morse-length}
The construction is similar to Morse's $\mu$-length \cite{MorseSpecialParametrization}; see \cite[Section~3.2]{PopeApproximation}. For each $k\geq2$, Morse takes the supremum, over ordered choices of $k$ points on a curve, of the minimum of successive distances. A convergent series of these functions with positive weights gives a continuous function whose values on initial subpaths yield a parametrization without constant subintervals. Formula~\eqref{eq:multiscale-metric-clock} uses bounded functions of truncated variations instead. All properties needed here have been proved directly.
\end{remark}

\section{Normalization of timed paths}
\label{sec:normalization}

Fix a clock $C$, a continuous injective map $\iota:|C|\to(M,d)$ into a metric space, and a triple $\mathfrak a$ as in Notation~\ref{not:multiscale-profile}. No change is made to the topology of $|C|$. Let $p:X\to C$ be a timed space, and write $f=\iota\circ p$.

\subsection{Factoring through the progress function}

For a directed path $x$, put
\[
\ell_x(t)=L_{f\circ x}(t).
\]
By Proposition~\ref{prop:multiscale-aggregation} and Lemma~\ref{lem:regular-stops},
\begin{align}
\ell_{x\circ\phi}(t)&=\ell_x(\phi(t)),\label{eq:timed-equivariance}\\
\ell_x(a)=\ell_x(b)&\quad\Longleftrightarrow\quad
x|_{[a,b]}\text{ is constant}.\label{eq:timed-stops}
\end{align}
The map $(x,t)\mapsto\ell_x(t)$ is continuous for the ordinary compact-open path topology. Indeed, composition with $f$ is continuous on compact-open mapping spaces, and Proposition~\ref{prop:multiscale-aggregation} applies in $M$.

For non-constant $x$, define
\begin{equation}
\label{eq:timed-clock}
\lambda_x(t)=\frac{\ell_x(t)}{\ell_x(1)}.
\end{equation}
This is an element of $\Rep$. Its fibres are precisely the maximal intervals on which $x$ is constant, including intervals reduced to a point. Since $\lambda_x$ is a quotient map, there is a unique continuous path $N_p(x)$ such that
\begin{equation}
\label{eq:timed-factorization}
\begin{tikzcd}[column sep=large]
 \I\arrow[rr,"x"]\arrow[dr,two heads,"\lambda_x"']&&{|X|}\\
 &\I\arrow[ur,dashed,"N_p(x)"']&
\end{tikzcd}
\qquad x=N_p(x)\circ\lambda_x.
\end{equation}
For a constant path set $\lambda_x=\id_\I$ and $N_p(x)=x$. The map $x\mapsto\lambda_x$ need not be continuous at constant paths.

\begin{lemma}
\label{lem:normalization-identities}
For every directed path $x$, the path $N_p(x)$ is directed, has the same endpoints and image as $x$, and represents its trace. Moreover, for every $\phi\in\Rep$,
\[
N_p(x\circ\phi)=N_p(x),\qquad N_p^2=N_p,
\qquad \lambda_{N_p(x)}=\id_\I.
\]
Every non-constant normalized path is regular.
\end{lemma}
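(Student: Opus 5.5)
We treat constant paths separately first: there $N_p(x)=x$, $\lambda_x=\id_\I$, and every $x\circ\phi$ is the same constant path, so all claims are immediate. From now on $x$ is non-constant.

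\emph{Directedness, endpoints, image, trace.} Since $\lambda_x\in\Rep$ is surjective, the factorization \eqref{eq:timed-factorization} shows that $N_p(x)$ has the same image as $x$. Because $\lambda_x$ fixes $0$ and $1$, it also has the same endpoints. The relation $x=N_p(x)\circ\lambda_x$ with $x\in dX$ and saturation of $X$ give $N_p(x)\in dX$. The same relation shows that $x\sim N_p(x)$, so $N_p(x)$ represents the trace of $x$.

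\emph{Invariance under $\Rep$.} Let $\phi\in\Rep$. The path $x\circ\phi$ is non-constant, since $\phi$ is surjective. By \eqref{eq:timed-equivariance}, $\ell_{x\circ\phi}=\ell_x\circ\phi$. Since $\phi(1)=1$, this gives $\lambda_{x\circ\phi}=\lambda_x\circ\phi$. Then
\[
x\circ\phi=N_p(x)\circ\lambda_x\circ\phi=N_p(x)\circ\lambda_{x\circ\phi},
\]
and uniqueness of the factorization through the quotient map $\lambda_{x\circ\phi}$ yields $N_p(x\circ\phi)=N_p(x)$.

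\emph{The identity $\lambda_{N_p(x)}=\id_\I$.} Put $y=N_p(x)$. It is non-constant, since it has the same image as $x$. Applying \eqref{eq:timed-equivariance} with $\phi=\lambda_x$ gives
\[
\ell_x=\ell_{y\circ\lambda_x}=\ell_y\circ\lambda_x,
\]
so $\ell_y(1)=\ell_x(1)$. Dividing by this common value gives $\lambda_x=\lambda_y\circ\lambda_x$. Since $\lambda_x$ is surjective, $\lambda_y=\id_\I$. Consequently the factorization of $y$ is $y=y\circ\id_\I$, so $N_p(y)=y$, which proves $N_p^2=N_p$.

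\emph{Regularity.} If a non-constant normalized path $y$ were constant on a nondegenerate interval $[a,b]$, then \eqref{eq:timed-stops} would give $\ell_y(a)=\ell_y(b)$. This contradicts $\lambda_y=\id_\I$, which is strictly increasing. Hence $y$ is regular.

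The only point needing care is that $\ell$ is applied to $y\circ\lambda_x$ with $\lambda_x\in\Rep$. This is legitimate because $y$ is directed and \eqref{eq:timed-equivariance} holds for every directed path; that equality was established through Proposition~\ref{prop:multiscale-aggregation} applied to $f\circ y$. No further obstacle is expected.
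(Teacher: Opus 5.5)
Your proof is correct and follows the paper's argument. Saturation gives directedness, equivariance of $\ell$ together with uniqueness of the factorization gives $\Rep$-invariance, and surjectivity of $\lambda_x$ in $\lambda_{N_p(x)}\circ\lambda_x=\lambda_x$ gives $\lambda_{N_p(x)}=\id_\I$. The only difference is cosmetic: you deduce $N_p^2=N_p$ from $\lambda_{N_p(x)}=\id_\I$, whereas the paper obtains it by applying $\Rep$-invariance to $x=N_p(x)\circ\lambda_x$.
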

\begin{proof}
Saturation applies to \eqref{eq:timed-factorization} and makes $N_p(x)$ directed. Surjectivity gives the image and endpoint statements, and the same factorization gives trace equivalence. For non-constant paths, \eqref{eq:timed-equivariance} yields
\[
\lambda_{x\circ\phi}(t)
=\frac{\ell_x(\phi(t))}{\ell_x(\phi(1))}
=\frac{\ell_x(\phi(t))}{\ell_x(1)}
=\lambda_x(\phi(t)),
\]
since $\phi(1)=1$. Thus $x\phi=N_p(x)\lambda_{x\circ\phi}$, and uniqueness of the factorization gives invariance of $N_p$ under $\phi$. It is immediate for constant paths. Applying this invariance to \eqref{eq:timed-factorization} gives
\[
N_p(x)=N_p(N_p(x)\circ\lambda_x)=N_p(N_p(x)),
\]
which proves idempotence.

For non-constant $x$, the same equations give
\[
\ell_x=\ell_{N_p(x)}\circ\lambda_x,
\qquad \ell_x(1)=\ell_{N_p(x)}(1)>0.
\]
Consequently
\[
(\lambda_{N_p(x)}\circ\lambda_x)(t)
=\frac{\ell_{N_p(x)}(\lambda_x(t))}{\ell_{N_p(x)}(1)}
=\frac{\ell_x(t)}{\ell_x(1)}=\lambda_x(t).
\]
Surjectivity of $\lambda_x$ gives $\lambda_{N_p(x)}=\id_\I$, and \eqref{eq:timed-stops} proves regularity. For constant paths the identity follows from the definition.
\end{proof}

For Hausdorff spaces, factorization through a regular or constant path is proved in \cite[Proposition~3.7]{reparam}, with the corrected proof in \cite[Proposition~2.2]{reparam-fixed}. Moreover, \cite[Theorem~3.6 and Corollary~4.5]{reparam} identifies traces with regular or constant paths modulo increasing homeomorphisms. We now prove that the progress function chooses one representative continuously in the original topology.

\subsection{Continuity in the topology of the source}

We give the proof of continuity because a continuous injective map into a metric space need not be an embedding.

\begin{lemma}[Continuous monotone factorization]
\label{lem:factor-continuity}
Let $Z$ be any topological space. Give
\[
E_Z=\{(x,\lambda)\in\Cont_\co(\I,Z)\times\Rep
\mid x\text{ is constant on each fibre of }\lambda\}
\]
the relative ordinary topology. The assignment $(x,\lambda)\mapsto\bar x$, determined by $x=\bar x\circ\lambda$, is continuous into $\Cont_\co(\I,Z)$.
\end{lemma}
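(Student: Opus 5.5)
The plan is to work directly with the subbasis of the compact-open topology on $\Cont_\co(\I,Z)$. No metric on $Z$ is available, and this is the point of the lemma, since $\iota$ need not be an embedding. So continuity must be tested on subbasic open sets
\[
V(K,U)=\{z\mid z(K)\subseteq U\},
\]
where $K\subseteq\I$ is compact and $U\subseteq Z$ is open. Fix $(x,\lambda)\in E_Z$ with $\bar x\in V(K,U)$. I will produce a neighbourhood $\mathcal N$ of $x$ in $\Cont_\co(\I,Z)$ and a number $\eta>0$ with the following property: every $(y,\mu)\in E_Z$ with $y\in\mathcal N$ and $\sup|\mu-\lambda|<\eta$ satisfies $\bar y\in V(K,U)$. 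A preimage of a subbasic open set is open exactly when every point has such a neighbourhood, so this gives continuity.

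The first step is to rewrite the condition on $\bar y$ in terms of $y$. Since $\mu$ is surjective and $y=\bar y\circ\mu$, we have
\[
\bar y(K)=y(\mu^{-1}(K)).
\]
Similarly $\bar x(K)=x(\lambda^{-1}(K))$. The hypothesis therefore says that the compact set $\lambda^{-1}(K)\subseteq\I$ lies in the open set $x^{-1}(U)$. Because $\I$ is compact metric, I choose an open set $W\subseteq\I$ with
\[
\lambda^{-1}(K)\subseteq W\subseteq\overline W\subseteq x^{-1}(U),
\]
for example a small metric neighbourhood of $\lambda^{-1}(K)$. I then take $\mathcal N=V(\overline W,U)$. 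This is an open neighbourhood of $x$, since $\overline W$ is compact.

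The second step controls $\mu^{-1}(K)$ uniformly in $\mu$. The set $\I\setminus W$ is compact and disjoint from $\lambda^{-1}(K)$. Hence $\lambda(\I\setminus W)$ is a compact set disjoint from $K$, and the distance between them is some $\eta>0$ (if either set is empty, any $\eta$ works). Suppose $\sup|\mu-\lambda|<\eta$ and $t\notin W$. Then $\mu(t)$ lies within $\eta$ of $\lambda(t)$, so $\mu(t)\notin K$. Therefore $\mu^{-1}(K)\subseteq W$. For $y\in\mathcal N$ this gives
\[
\bar y(K)=y(\mu^{-1}(K))\subseteq y(\overline W)\subseteq U,
\]
as required. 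The uniform topology on $\Rep$ is the one fixed earlier, so the set of such $\mu$ is open.

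I expect the only delicate point to be the first step, that is, passing from a condition on $\bar y$ to a condition on $y$ that is open in $y$ without any metric on $Z$. The identity $\bar y(K)=y(\mu^{-1}(K))$, which uses surjectivity of $\mu$, together with the compact shrinking $\overline W$, resolves this. Well-definedness of $\bar y$ is built into the definition of $E_Z$, since $\mu$ is a quotient map. Finally, the relative topology on $E_Z$ makes the set of pairs $(y,\mu)$ described above a neighbourhood of $(x,\lambda)$ in $E_Z$.
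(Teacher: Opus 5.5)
Your proof is correct, but it takes a different route from the paper's.

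The paper uses the exponential law to reduce to joint continuity of $(x,\lambda,t)\mapsto\bar x(t)$. It fixes $(x_0,\lambda_0,t_0)$ and uses that the fibre $\lambda_0^{-1}(t_0)$ is a compact interval. It enlarges this interval to some $[a,b]$ with $x_0([a,b])\subseteq U$, then imposes the open conditions $x([a,b])\subseteq U$ and $\lambda(a)<t<\lambda(b)$. Monotonicity of $\lambda$ then traps $\lambda^{-1}(t)$ inside $]a,b[$, and the endpoints $t_0=0,1$ are treated separately.

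You instead test the compact-open subbasis directly. You rewrite the condition as $\bar y(K)=y(\mu^{-1}(K))$. You then show that $\mu^{-1}(K)$ stays inside a prescribed open $W\supseteq\lambda^{-1}(K)$ for $\mu$ uniformly near $\lambda$, using the positive distance between $\lambda(\I\setminus W)$ and $K$.

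Your route has three advantages. It avoids the exponential law and the endpoint case split. It treats all compact $K\subseteq\I$ at once. It also never uses monotonicity, only surjectivity, the quotient property and uniform closeness. So it proves the same continuity for pairs $(x,\lambda)$ with $\lambda$ an arbitrary continuous surjection $\I\to\I$, not just an element of $\Rep$.

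The paper's argument is more local. Its open condition on $\lambda$ involves only evaluation at the two points $a,b$, but it relies on the interval structure of fibres of nondecreasing maps. That structure is available throughout the paper's setting, so nothing is lost there.
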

\begin{proof}
Since $\lambda$ is a quotient map, $\bar x$ is continuous. By the exponential law, it suffices to prove continuity of $(x,\lambda,t)\mapsto\bar x(t)$. Fix $(x_0,\lambda_0,t_0)$ and an open neighborhood $U$ of $\bar x_0(t_0)$. The fibre $K=\lambda_0^{-1}(t_0)$ is a compact interval mapped by $x_0$ into $U$. For $0<t_0<1$, choose $a<\min K\leq\max K<b$ with $x_0([a,b])\subset U$. Then
\[
\lambda_0(a)<t_0<\lambda_0(b).
\]
For nearby $(x,\lambda,t)$, impose the open conditions
\[
x([a,b])\subset U,\qquad \lambda(a)<t<\lambda(b).
\]
They imply $\lambda^{-1}(t)\subset]a,b[$ and hence $\bar x(t)\in U$. At $t_0=0$, use $[0,b]$ and only the upper inequality; at $t_0=1$, use $[a,1]$ and only the lower inequality.
\end{proof}

\begin{lemma}[Continuity at constant paths]
\label{lem:constant-path-continuity}
Let $E,Z$ be topological spaces and let $F,G:E\to\Cont_\co(\I,Z)$ be maps such that
\[
F(e)(\I)\subseteq G(e)(\I)\qquad(e\in E).
\]
If $G$ is continuous at $e_0$ and $G(e_0)=c_z$ (the constant path $z$), then $F(e_0)=c_z$ and $F$ is continuous at $e_0$.
\end{lemma}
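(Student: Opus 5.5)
The plan is to handle the two assertions in order: first identify $F(e_0)$, then prove continuity at $e_0$ by checking the subbasic open sets of the compact-open topology.

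For the first assertion, the hypothesis at $e=e_0$ gives $F(e_0)(\I)\subseteq G(e_0)(\I)=\{z\}$. Since $\I$ is nonempty, $F(e_0)$ is the constant path $c_z$.

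For continuity, it suffices to treat a subbasic neighbourhood $W(K,U)=\{y\mid y(K)\subseteq U\}$ of $c_z$, with $K\subseteq\I$ compact and $U\subseteq Z$ open. Indeed, finite intersections of such sets form a base, and taking the intersection of finitely many neighbourhoods of $e_0$ handles a basic open set.

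If $K=\varnothing$, then $W(K,U)$ is everything and there is nothing to prove. Otherwise $c_z\in W(K,U)$ forces $z\in U$. Then $W(\I,U)$ is an open neighbourhood of $c_z=G(e_0)$. By continuity of $G$ at $e_0$, there is a neighbourhood $V$ of $e_0$ with $G(e)(\I)\subseteq U$ for all $e\in V$. For such $e$, the hypothesis gives $F(e)(K)\subseteq F(e)(\I)\subseteq G(e)(\I)\subseteq U$, so $F(e)\in W(K,U)$. This proves continuity of $F$ at $e_0$.

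There is no real obstacle here. The only point needing care is the reduction to subbasic sets and the observation that one passes to the larger compact set $\I$, which is legitimate because the target neighbourhood of a constant path only constrains values inside $U\ni z$. No Hausdorff or metric assumption on $Z$ is used.
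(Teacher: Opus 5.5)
Your proof is correct and follows essentially the same route as the paper. The paper observes directly that every compact-open neighbourhood of $c_z$ contains a set $[\I,U]$ with $U$ an open neighbourhood of $z$; this is your subbasic reduction, with $U$ the intersection of the finitely many $U_i$. It then uses continuity of $G$ at $e_0$ and the inclusion $F(e)(\I)\subseteq G(e)(\I)$ exactly as you do.
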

\begin{proof}
The inclusion of images gives $F(e_0)=c_z$. Every compact-open neighborhood of $c_z$ contains a set $[\I,U]$, where $U$ is an open neighborhood of $z$. Continuity of $G$ gives a neighborhood $V$ of $e_0$ such that, for $e\in V$,
\[
F(e)(\I)\subseteq G(e)(\I)\subseteq U.
\]
Thus $F(V)\subseteq[\I,U]$, as required.
\end{proof}

\begin{proposition}
\label{prop:normalization-continuous}
For fixed endpoints, the operator
\[
N_p:\dP_\co(X)(u,v)\longrightarrow\dP_\co(X)(u,v)
\]
is continuous. Its $\Delta$-kelleyfication is a continuous operator on $\dP(X)(u,v)$.
\end{proposition}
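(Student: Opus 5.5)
The plan is to split $\dP_\co(X)(u,v)$ according to whether paths are constant, and to use Lemma~\ref{lem:factor-continuity} away from constant paths and Lemma~\ref{lem:constant-path-continuity} at constant paths.

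\textbf{Constant paths.} If $u\neq v$, there are no constant paths in $\dP_\co(X)(u,v)$. If $u=v$, the only constant path is $c_u$. Continuity of $N_p$ at $c_u$ follows from Lemma~\ref{lem:constant-path-continuity}, applied with $E=\dP_\co(X)(u,v)$, $G=\id$ and $F=N_p$. The hypothesis $N_p(x)(\I)=x(\I)$ holds by Lemma~\ref{lem:normalization-identities}, and $N_p(c_u)=c_u$ by definition. This is exactly why the image-preservation lemma was stated.

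\textbf{Non-constant paths.} The set $W$ of non-constant paths is open: it is the preimage of $]0,\infty[$ under $x\mapsto\ell_x(1)$. That map is continuous because $x\mapsto f\circ x$ is continuous for the compact-open topologies and Proposition~\ref{prop:multiscale-aggregation} applies in $M$. On $W$, I show that $x\mapsto\lambda_x$ is continuous into $\Rep$. The map $(x,t)\mapsto\ell_x(t)/\ell_x(1)$ is jointly continuous on $W\times\I$, and $\I$ is compact Hausdorff, so the exponential law gives continuity into $\Cont_\co(\I,\I)$. The image lies in $\Rep$, which carries the subspace topology. The map $x\mapsto(x,\lambda_x)$ then lands in $E_{|X|}$, because $x$ is constant on the fibres of $\lambda_x$ by \eqref{eq:timed-stops}. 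It is continuous into $\Cont_\co(\I,|X|)\times\Rep$ with the relative topology. Lemma~\ref{lem:factor-continuity} then shows that $x\mapsto N_p(x)=\bar x$ is continuous on $W$. Since $W$ is open, this gives continuity at every point of $W$. Together with the constant case, $N_p$ is continuous. Its values lie in $\dP_\co(X)(u,v)$ by Lemma~\ref{lem:normalization-identities}.

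\textbf{Main obstacle.} The delicate point is that $\iota$ is only a continuous injection, so the metric $d$ does not control the topology of $|X|$ or $|C|$. For this reason the factorization must be made in the source topology, which Lemma~\ref{lem:factor-continuity} does without any metric. The metric enters only through $\lambda_x$, which is a real-valued quantity and is therefore insensitive to this issue. A minor check is that $W$ and $\Rep$ are handled with ordinary (non-kelleyfied) topologies throughout.

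\textbf{Kelleyfication.} Finally, $k_\Delta$ is a functor. A continuous map $A\to B$ of spaces therefore induces a continuous map $k_\Delta A\to k_\Delta B$, since the identity $k_\Delta A\to A$ is continuous and maps from simplices compose. Hence $k_\Delta N_p$ is a continuous operator on $\dP(X)(u,v)$.
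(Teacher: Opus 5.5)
Your proposal is correct and follows essentially the same route as the paper. The non-constant paths form the open set $\{x\mid\ell_x(1)>0\}$; on it, $x\mapsto\lambda_x$ is continuous by the exponential law, and Lemma~\ref{lem:factor-continuity} gives continuity of $N_p$. At constant paths, Lemma~\ref{lem:constant-path-continuity} with $G=\id$ applies, and functoriality of $k_\Delta$ gives the last assertion. Your extra checks make explicit points the paper leaves implicit: $x$ is constant on the fibres of $\lambda_x$ by \eqref{eq:timed-stops}, and the only possible constant path is $c_u$, which occurs only when $u=v$.
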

\begin{proof}
The non-constant paths form an open subset, namely $\{x\mid\ell_x(1)>0\}$. On this subset the joint continuity of $\ell$ and the exponential law show that $x\mapsto\lambda_x$ is continuous into $\Cont_\co(\I,\I)$, hence into $\Rep$. Lemma~\ref{lem:factor-continuity} proves continuity of $N_p$ there.

Since $N_p(x)$ has the same image as $x$, Lemma~\ref{lem:constant-path-continuity}, with $G(x)=x$, proves continuity at constant paths. Lemma~\ref{lem:normalization-identities} gives the required directed paths and endpoints. Applying $k_\Delta$ proves the last assertion.
\end{proof}

\begin{theorem}[Normalization]
\label{thm:timed-normalization}
For every timed space $p:X\to C$ and every pair $u,v$, the subspace
\[
\Nat_p(X)(u,v)=N_p\bigl(\dP(X)(u,v)\bigr)
\]
with its relative topology is $\Delta$-generated and is a strong deformation retract of $\dP(X)(u,v)$. The deformation preserves every trace class.
\end{theorem}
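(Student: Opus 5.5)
The natural deformation is the straight-line homotopy between the identity reparametrization and $\lambda_x$. For a directed path $x$ from $u$ to $v$ and $s\in\I$, we put
\[
H(x,s)=N_p(x)\circ\bigl((1-s)\lambda_x+s\,\id_\I\bigr)\quad\text{for $x$ non-constant},\qquad H(x,s)=x\quad\text{for $x$ constant}.
\]
Then $H(x,0)=N_p(x)\circ\lambda_x=x$ and $H(x,1)=N_p(x)$. The convex combination $(1-s)\lambda_x+s\,\id_\I$ lies in $\Rep$ because $\Rep$ is convex. Hence $H(x,s)$ is a reparametrization of the directed path $N_p(x)$ by an element of $\Rep$. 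By Lemma~\ref{lem:normalization-identities} it is therefore directed, has endpoints $u,v$, and lies in the trace class of $x$. This already shows that the deformation preserves trace classes.

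If $x\in\Nat_p(X)(u,v)$, then $x=N_p(y)$ for some $y$, and Lemma~\ref{lem:normalization-identities} gives $\lambda_x=\id_\I$ and $N_p(x)=x$, or $x$ is constant. In both cases $H(x,s)=x$ for all $s$, so the homotopy is stationary on $\Nat_p(X)(u,v)$. Moreover $H(-,1)=N_p$ lands in $\Nat_p(X)(u,v)$, and $N_p$ restricts to the identity there by idempotence. So the retraction is $N_p$ and the inclusion is a section of it.

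The main step is continuity of $H$ on $\dP_\co(X)(u,v)\times\I$. The proof splits into two regions.
\begin{enumerate}[label=\textup{(\alph*)}]
\item On the open set of non-constant paths, $x\mapsto\lambda_x$ is continuous into $\Rep$, as shown in the proof of Proposition~\ref{prop:normalization-continuous}, and $N_p$ is continuous. Composition $\Cont_\co(\I,|X|)\times\Cont_\co(\I,\I)\to\Cont_\co(\I,|X|)$ is continuous because $\I$ is compact Hausdorff, so $H$ is continuous there.
\item At a constant path $c_z$, we use Lemma~\ref{lem:constant-path-continuity} with $E=\dP_\co(X)(u,v)\times\I$, $F=H$ and $G(x,s)=x$. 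This applies since $H(x,s)$ has the same image as $x$.
\end{enumerate}
Thus $H$ is continuous for the ordinary topologies.

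It remains to pass to $\Delta$-kelleyfications. Since $k_\Delta$ is functorial and $\I$ is $\Delta$-generated, $H$ induces a continuous map $\dP(X)(u,v)\times\I\to\dP(X)(u,v)$. Here we use Lemma~\ref{lem:interval-product}: the ordinary product $\dP(X)(u,v)\times\I$ is $\Delta$-generated, and it maps continuously to $\dP_\co(X)(u,v)\times\I$, so $H$ is continuous on it into $\dP(X)(u,v)$.

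For $\Delta$-generation of $\Nat_p(X)(u,v)$ with the relative topology, note that it is the image of the retraction $N_p$ of the $\Delta$-generated space $\dP(X)(u,v)$. A retract of a $\Delta$-generated space, with the relative topology, is a quotient of it: $N_p$ is a continuous surjection onto the subspace with a continuous section, namely the inclusion. A final quotient of a $\Delta$-generated space is $\Delta$-generated. I expect this last point, together with checking that the relative topology from $\dP(X)(u,v)$ is the one intended, to be the only subtle step. The retraction-is-quotient argument should settle it.
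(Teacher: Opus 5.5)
Your proposal is correct and follows essentially the same route as the paper. You use the same interpolation $N_p(x)\circ\bigl((1-s)\lambda_x+s\id_\I\bigr)$, continuity on the open set of non-constant paths, Lemma~\ref{lem:constant-path-continuity} with $G(x,s)=x$ at constant paths, and passage to $k_\Delta$ via Lemma~\ref{lem:interval-product}. As in the paper, you get $\Delta$-generation because the image of the continuous idempotent $N_p$ is a retract, hence a quotient of $\dP(X)(u,v)$.
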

\begin{proof}
Define
\begin{equation}
\label{eq:timed-homotopy}
H_p(x,s)(t)=N_p(x)\bigl((1-s)\lambda_x(t)+st\bigr),
\end{equation}
where $\lambda_{c_z}=\id_\I$ and $N_p(c_z)=c_z$ as before. Write $\sigma_s=(1-s)\lambda_x+s\id_\I$. This map is continuous, fixes $0,1$, and, for $a<b$, satisfies
\[
\sigma_s(b)-\sigma_s(a)=(1-s)(\lambda_x(b)-\lambda_x(a))+s(b-a)\geq s(b-a)\geq0.
\]
Thus $\sigma_s\in\Rep$; for $s>0$ it is strictly increasing and hence an increasing homeomorphism. It follows that $H_p(x,s)$ is directed, preserves endpoints, and has the trace and image of $x$.

On non-constant paths, ordinary compact-open continuity follows from continuity of $N_p$, $\lambda$, and composition. At constant paths, apply Lemma~\ref{lem:constant-path-continuity} with $G(x,s)=x$. Applying $k_\Delta$ and Lemma~\ref{lem:interval-product} gives a continuous homotopy on $\dP(X)(u,v)$.

The endpoint and fixed-point identities follow by substitution:
\[
\begin{aligned}
H_p(x,0)&=N_p(x)\circ\lambda_x=x,\\
H_p(x,1)&=N_p(x)\circ\id_\I=N_p(x),\\
H_p(N_p(x),s)&=N_p(x)\circ\bigl((1-s)\id_\I+s\id_\I\bigr)=N_p(x).
\end{aligned}
\]
The last identity uses $N_p^2=N_p$ and $\lambda_{N_p(x)}=\id_\I$. The relative image of the continuous idempotent $N_p$ is a retract, hence a quotient of $\dP(X)(u,v)$ and therefore $\Delta$-generated. This proves the assertion.
\end{proof}

Figure~\ref{fig:normalization-progress} illustrates the interpolation in \eqref{eq:timed-homotopy}. The same homotopy will give regular execution paths in the globular clock.

\begin{figure}[htbp]
\centering
\begin{tikzpicture}[>=stealth,font=\small]
 \begin{scope}[x=6cm,y=3.8cm]
  \fill[gray!10] (.35,0) rectangle (.65,1);
  \draw[->] (0,0)--(1.08,0) node[right] {$t$};
  \draw[->] (0,0)--(0,1.12) node[above] {progress};
  \draw[gray,dashed] (0,0)--(1,1);
  \draw[very thick] (0,0)--(.35,.5)--(.65,.5)--(1,1);
  \draw[thick,dark-red] (0,0)--(.35,.425)--(.65,.575)--(1,1);
  \draw[gray,dotted] (.35,0)--(.35,.5);
  \draw[gray,dotted] (.65,0)--(.65,.5);
  \draw (.35,.015)--(.35,-.015) node[below] {$a$};
  \draw (.65,.015)--(.65,-.015) node[below] {$b$};
  \node[below left] at (0,0) {$0$};
  \node[below] at (1,0) {$1$};
  \node[left] at (0,1) {$1$};
  \node[font=\scriptsize] at (.5,.14) {stop interval};
 \end{scope}
 \draw[very thick] (7.2,3.3)--(8,3.3);
 \node[anchor=west] at (8.15,3.3) {$s=0:\ \lambda_x$};
 \draw[thick,dark-red] (7.2,2.45)--(8,2.45);
 \node[anchor=west] at (8.15,2.45) {$s=\tfrac12:\ \tfrac12\lambda_x+\tfrac12\id_\I$};
 \draw[gray,dashed] (7.2,1.6)--(8,1.6);
 \node[anchor=west] at (8.15,1.6) {$s=1:\ \id_\I$};
 \node[anchor=west,align=left] at (7.2,.5)
 {$H_p(x,s)=N_p(x)\circ\sigma_s$\\[3pt]
  $\sigma_s=(1-s)\lambda_x+s\id_\I$};
\end{tikzpicture}
\caption{The graph of $\lambda_x$ for a non-constant path with a stop interval $[a,b]$. For every $s>0$, the map $\sigma_s$ is strictly increasing, so $H_p(x,s)$ is a regular path with the same trace. At $s=1$ it is the normalized path $N_p(x)$.}
\label{fig:normalization-progress}
\end{figure}

A map in $\Top$ is a Hurewicz fibration if it has the homotopy lifting property with respect to every space $Z\in\Top$: every commutative square with left-hand map $Z\times\{0\}\to Z\times\I$ admits a lift \cite{Barthel-Riel}. A \emph{trivial Hurewicz fibration} is a Hurewicz fibration which is also a homotopy equivalence. It is therefore a trivial q-fibration, since a Hurewicz fibration is a Serre fibration and a homotopy equivalence is a weak homotopy equivalence.

For a map $q:P\to T$, consider the mapping path space
\[
\Gamma_q=P\times_T T^\I
=\{(x,\gamma)\mid\gamma(0)=q(x)\},
\]
with mapping spaces and pullbacks in $\Top$:
\[
\begin{tikzcd}[column sep=large,row sep=large]
\Gamma_q\arrow[r,"\pi_2"]\arrow[d,"\pi_1"']\arrow[dr,phantom,"\lrcorner",very near start]
&T^\I\arrow[d,"\operatorname{ev}_0"]\\
P\arrow[r,"q"']&T.
\end{tikzcd}
\]
A \emph{lifting function} for $q$ is a continuous section of $P^\I\to\Gamma_q$, $\beta\mapsto(\beta(0),q\beta)$. By the exponential law, it is equivalent to a lift in the square
\[
\begin{tikzcd}[column sep=large,row sep=large]
\Gamma_q\arrow[r,"\pi_1"]\arrow[d,"i_0"']&P\arrow[d,"q"]\\
\Gamma_q\times\I\arrow[r,"\operatorname{ev}"']\arrow[ur,dashed,"L"]&T,
\end{tikzcd}
\]
where $i_0(x,\gamma)=((x,\gamma),0)$ and $\operatorname{ev}((x,\gamma),s)=\gamma(s)$. Hurewicz's lifting-function criterion states that $q$ is a Hurewicz fibration if and only if this square admits a lift \cite[Sections~1--2]{HurewiczFiberSpace}. This is the universal lifting problem of \cite[Remark~5.11]{Barthel-Riel}, where $\Gamma_q$ is denoted by $Nq$.

Following \cite[Section~1.5]{DoldPartitions}, a map $q:P\to T$ is \emph{shrinkable} if it admits a section $j:T\to P$ and a homotopy $H:P\times\I\to P$ over $T$ from $\id_P$ to $jq$, that is,
\[
H(x,0)=x,\qquad H(x,1)=jq(x),\qquad qH(x,s)=q(x).
\]
Thus $j$ is a homotopy inverse of $q$. The following sufficient condition constructs both a lifting function and such a fibrewise contraction.

\begin{proposition}[A sufficient condition for a trivial Hurewicz fibration]
\label{prop:abstract-hurewicz}
Let $q:P\to T$ and $j:T\to P$ be continuous maps of $\Delta$-generated spaces such that $qj=\id_T$, and put $N=jq$. Give
\[
 D_q=k_\Delta\left(\bigl(P\times T\times]0,1]\bigr)
 \cup\{(x,q(x),0)\mid x\in P\}\right)
\]
the $\Delta$-kelleyfication of the relative topology in the ordinary product $P\times T\times\I$. Suppose there is a continuous map $K:D_q\to P$ such that
\begin{align}
 qK(x,\tau,s)&=\tau,\label{eq:abstract-lift-fibre}\\
 K(x,q(x),0)&=x,\label{eq:abstract-lift-initial}\\
 K(x,\tau,1)&=j(\tau).\label{eq:abstract-lift-final}
\end{align}
Then $q$ is a trivial Hurewicz fibration. The formula $H'(x,s)=K(x,q(x),s)$ gives a homotopy from $\id_P$ to $N$ satisfying $qH'(x,s)=q(x)$. If moreover $K(j(\tau),\tau,s)=j(\tau)$ for every $\tau\in T$ and $s\in\I$, then $H'$ is a strong deformation retraction onto $j(T)$.
\end{proposition}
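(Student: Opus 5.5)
Three things have to be proved: the Hurewicz fibration property, the homotopy equivalence, and the strong deformation retraction. For the fibration property I will use Hurewicz's lifting-function criterion. Given $(x,\gamma)\in\Gamma_q$ and $s\in\I$, I set
\[
L((x,\gamma),s)=K(x,\gamma(s),s).
\]
This is well defined. For $s>0$ the point $(x,\gamma(s),s)$ lies in $P\times T\times]0,1]$. For $s=0$ we have $\gamma(0)=q(x)$, so the point $(x,q(x),0)$ lies in the added set.

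Property \eqref{eq:abstract-lift-fibre} gives $qL=\operatorname{ev}$. Property \eqref{eq:abstract-lift-initial} gives $L\circ i_0=\pi_1$. So $L$ solves the universal lifting square, and $q$ is a Hurewicz fibration.

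The main point to check is continuity of $L$. The map
\[
\Gamma_q\times\I\to P\times T\times\I,\qquad ((x,\gamma),s)\mapsto(x,\gamma(s),s),
\]
is continuous into the ordinary product, since evaluation is continuous in $\Top$ and $\Delta$-kelleyfication only refines topologies. It lands in the subset underlying $D_q$, so it is continuous for the relative topology. The source $\Gamma_q\times\I$ (product in $\Top$) is $\Delta$-generated. A continuous map from a $\Delta$-generated space into a space $Y$ is still continuous into $k_\Delta Y$, because composites with simplices stay continuous. Hence the map factors continuously through $D_q$, and composing with $K$ gives a continuous $L$.

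Next, $H'(x,s)=K(x,q(x),s)$ is continuous by the same factorization argument, now applied to $(x,s)\mapsto(x,q(x),s)$ on $P\times\I$. This product is the one in $\Top$, which by Lemma~\ref{lem:interval-product} is also the ordinary product. The properties follow directly from the hypotheses:
\begin{itemize}
\item \eqref{eq:abstract-lift-initial} gives $H'(x,0)=x$;
\item \eqref{eq:abstract-lift-final} gives $H'(x,1)=jq(x)=N(x)$;
\item \eqref{eq:abstract-lift-fibre} gives $qH'(x,s)=q(x)$.
\end{itemize}
Thus $q$ is shrinkable with section $j$. In particular $jq\simeq\id_P$ and $qj=\id_T$, so $q$ is a homotopy equivalence. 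Together with the fibration property, $q$ is a trivial Hurewicz fibration.

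Finally, assume $K(j(\tau),\tau,s)=j(\tau)$. For a point $j(\tau)$ of $j(T)$ we have $q(j(\tau))=\tau$, so
\[
H'(j(\tau),s)=K(j(\tau),\tau,s)=j(\tau)\qquad(s\in\I).
\]
So $H'$ fixes $j(T)$ pointwise, ends at the retraction $N$ onto $j(T)$, and starts at the identity. It is therefore a strong deformation retraction onto $j(T)$. The only delicate point in the whole argument is the topology of $D_q$, and the factorization through $k_\Delta$ handles it.
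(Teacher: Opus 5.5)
Your proof is correct and follows the paper's argument. You use the same lift $L((x,\gamma),s)=K(x,\gamma(s),s)$, prove its continuity through the ordinary subspace and the universal property of $k_\Delta$ together with Lemma~\ref{lem:interval-product}, and use $H'$ to show that $q$ is shrinkable. The only difference is cosmetic: you check continuity of $H'$ directly by the same factorization, whereas the paper obtains $H'$ as the restriction of $L$ to constant base paths.
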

\begin{proof}
Define a lift in the universal square above by
\[
L((x,\gamma),s)=K(x,\gamma(s),s).
\]
The map $((x,\gamma),s)\mapsto(x,\gamma(s),s)$ takes values in $D_q$ because $\gamma(0)=q(x)$. It is continuous into the ordinary subspace defining $D_q$, and therefore into $D_q$ by the universal property of $k_\Delta$ and Lemma~\ref{lem:interval-product}. Thus $L$ is continuous. The first two identities for $K$ give
\[
\begin{aligned}
qL((x,\gamma),s)&=qK(x,\gamma(s),s)=\gamma(s),\\
L((x,\gamma),0)&=K(x,q(x),0)=x.
\end{aligned}
\]
Hence $q$ is a Hurewicz fibration. For maps $a:Z\to P$ and $B:Z\times\I\to T$ with $B(z,0)=qa(z)$, the corresponding lift is
\begin{equation}
\label{eq:abstract-homotopy-lift}
 \widetilde B(z,s)=L((a(z),B(z,-)),s)=K\bigl(a(z),B(z,s),s\bigr).
\end{equation}

On constant base paths, this construction gives $H'(x,s)=K(x,q(x),s)$. The last identity for $K$ gives $H'(x,1)=jq(x)$, so $H'$ exhibits $q$ as shrinkable. Consequently $q$ is a homotopy equivalence. The additional identity makes this homotopy fix $j(T)$.
\end{proof}

\begin{theorem}[Paths and traces of timed spaces]
\label{thm:timed-traces}
For every timed space $p:X\to C$ and every $u,v\in|X|$, restriction of the quotient induces a homeomorphism
\[
\Nat_p(X)(u,v)\xrightarrow{\ \cong\ }\dT(X)(u,v).
\]
The quotient $q_{u,v}$ is a trivial Hurewicz fibration and, in particular, a trivial q-fibration. If $|X|$ is $\Delta$-Hausdorff, its trace spaces are $\Delta$-Hausdorff as well.
\end{theorem}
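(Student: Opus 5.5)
Set $P=\dP(X)(u,v)$ and $T=\dT(X)(u,v)$, and use Proposition~\ref{prop:abstract-hurewicz} with $q=q_{u,v}$.

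\textbf{Step 1: a continuous section.} By Lemma~\ref{lem:normalization-identities}, $N_p$ is constant on trace classes. The class of $x$ is generated by the moves $x\sim x\circ\phi$, and $N_p(x\circ\phi)=N_p(x)$. Hence $N_p$ factors through the quotient as a map $j:T\to P$ with $jq=N_p$. Since $q$ is a quotient map and $N_p$ is continuous (Proposition~\ref{prop:normalization-continuous}), $j$ is continuous. Because $N_p(x)$ represents the trace of $x$, we get $qj=\id_T$.

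\textbf{Step 2: the homeomorphism.} The map $j$ lands in $\Nat_p(X)(u,v)$ and is a continuous inverse of $q|_{\Nat_p}$. Indeed, $q$ restricted to $\Nat_p$ is injective, because $N_p(x)=N_p(y)$ whenever $x\sim y$.

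\textbf{Step 3: the family $K$.} For $(x,\tau,s)\in D_q$, the idea is to follow $H_p(x,-)$ at $s=0$ while steering the trace to $\tau$. The naive formula
\[
K(x,\tau,s)=j(\tau)\circ\sigma_s,\qquad \sigma_s=(1-s)\lambda_x+s\,\id_\I,
\]
is directed and lies over $\tau$. It also satisfies $K(x,q(x),0)=N_p(x)\circ\lambda_x=x$, $K(x,\tau,1)=j(\tau)$ and $K(j(\tau),\tau,s)=j(\tau)$, using $\lambda_{N_p(x)}=\id_\I$.

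\textbf{Step 4: continuity of $K$, the main obstacle.} There are two places where continuity is not automatic.
\begin{enumerate}[label=\textup{(\alph*)}]
\item The map $x\mapsto\lambda_x$ is discontinuous at constant paths. Since $u,v$ are fixed, a constant path only occurs when $u=v$. There, $j(\tau)\circ\sigma_s$ has the image of $j(\tau)$, which is the image of any representative of $\tau$. I would handle this by a variant of Lemma~\ref{lem:constant-path-continuity}: when $\tau$ is near the class of a constant path, all representatives of $\tau$ have image inside a prescribed open set. This needs checking, because the quotient topology on $T$ is involved. The tool is that $j$ is continuous and $j(\tau)$ has the same image as every representative.
\item Near $s=0$ with $\tau\neq q(x)$, the formula is only required to be continuous on $D_q$. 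In $D_q$ the points with $s=0$ have $\tau=q(x)$, so the only check is at those points. Joint continuity of $(\lambda,y)\mapsto y\circ\lambda$ and continuity of $j$ give continuity there.
\end{enumerate}
On non-constant $x$, continuity follows from continuity of $\lambda$ (Proposition~\ref{prop:normalization-continuous}), of $j$, and of composition. Applying $k_\Delta$ and Lemma~\ref{lem:interval-product} then gives continuity on $D_q$.

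\textbf{Conclusion.} Proposition~\ref{prop:abstract-hurewicz} now shows that $q$ is a trivial Hurewicz fibration, hence a trivial q-fibration.

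\textbf{$\Delta$-Hausdorff case.} Suppose $|X|$ is $\Delta$-Hausdorff. Through the homeomorphism of Step 2, $T$ is a retract of $P$. The path space $P$ is a subspace of $\Cont_\Delta(\I,|X|)$, which is $\Delta$-Hausdorff. A retract of a $\Delta$-Hausdorff space is $\Delta$-Hausdorff: the image of a path is closed, being the preimage of the closed image under the retraction intersected with the retract, which is closed as the equalizer locus of the retraction and the identity. For that last closedness I would use $\Delta$-Hausdorffness on paths only, arguing along each test path.
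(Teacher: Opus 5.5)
Your Steps 1--2 agree with the paper, and the identities you check for $K(x,\tau,s)=j(\tau)\circ\sigma_s$, with $\sigma_s=(1-s)\lambda_x+s\,\id_\I$, are correct. The gap is the continuity of this $K$ on $D_q$, and no image argument can repair it.

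Your case (a) treats a constant $x$ only together with $\tau$ close to the constant trace, which is what $D_q$ forces at $s=0$. But $D_q$ also contains every point $(c_u,\tau,s)$ with $u=v$, $s>0$ and $\tau$ arbitrary. There $K(c_u,\tau,s)=j(\tau)$ need not be constant, so Lemma~\ref{lem:constant-path-continuity} does not apply. The jump of $x\mapsto\lambda_x$ at $c_u$ then passes directly into $K$.

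Concretely, let $r\mapsto z_r$ be a continuous family of directed loops at $u$ with $z_0=c_u$ and $z_r$ non-constant for $r>0$. For instance, take $X=\mathbb R^2$ whose directed paths are those admitting a nondecreasing angle about the origin on every interval avoiding it. This space is saturated and metrizable, hence timed over itself, and the dilates $z_r=r\cdot y$ of a counterclockwise loop $y$ at the origin form such a family. Fix $\psi\in\Rep$ with $\psi\neq\id_\I$ and put $x_r=N_p(z_r)\circ\psi$. This is a continuous path in $P$ with $x_0=c_u$ and $\lambda_{x_r}=\psi$ for $r>0$. Take a non-constant trace $\tau$ and $s=\frac12$. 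The continuous path $r\mapsto(x_r,\tau,\frac12)$ in $D_q$ is sent to $j(\tau)\circ\bigl(\tfrac12\psi+\tfrac12\id_\I\bigr)$ for $r>0$, but to $j(\tau)$ at $r=0$. These paths differ, since their $\lambda$'s are $\tfrac12\psi+\tfrac12\id_\I$ and $\id_\I$. So your $K$ is not continuous, and Proposition~\ref{prop:abstract-hurewicz} cannot be applied to it.

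The missing idea is to interpolate the unnormalized progress function before dividing by its total value. The paper takes
\[
\eta_{x,s}(t)=\frac{st+(1-s)\ell_x(t)}{s+(1-s)\ell_x(1)},\qquad K(x,\tau,s)=j(\tau)\circ\eta_{x,s},
\]
with $K(c_u,q(c_u),0)=c_u$. This reparametrization has the following properties:
\begin{enumerate}[label=\textup{(\roman*)}]
\item it lies in $\Rep$;
\item it equals $\lambda_x$ at $s=0$ for non-constant $x$, and $\id_\I$ at $s=1$;
\item it gives $K(j(\tau),\tau,s)=j(\tau)$;
\item its denominator vanishes only at $s=0$ with $x$ constant.
\end{enumerate}
Since $\ell_x\to0$ uniformly as $x\to c_u$, $\eta_{x,s}$ tends to $\id_\I$ for every fixed $s>0$. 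Hence $K$ is continuous wherever the denominator is positive. The single remaining point $(c_u,q(c_u),0)$ of $D_q$ is exactly your case (a), settled by Lemma~\ref{lem:constant-path-continuity} with $G(x,\tau,s)=j(\tau)$.

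Your $\Delta$-Hausdorff conclusion is right, but the equalizer step is unnecessary. The composite $T\xrightarrow{j}P\to\Cont_\Delta(\I,|X|)$ is a continuous injection into a $\Delta$-Hausdorff space. So the image of a path in $T$ is the inverse image of the closed image of the composite path, hence closed.
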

\begin{proof}
Write $P=\dP(X)(u,v)$, $T=\dT(X)(u,v)$, and $q=q_{u,v}$. By Lemma~\ref{lem:normalization-identities} and Proposition~\ref{prop:normalization-continuous}, normalization induces a continuous section $j:T\to P$ with $jq=N_p$:
\[
\begin{tikzcd}[column sep=large]
\dP(X)(u,v)\arrow[r,"q_{u,v}"]\arrow[dr,"N_p"']
&\dT(X)(u,v)\arrow[d,"j"]\\
&\dP(X)(u,v)
\end{tikzcd}
\]
Its image is $\Nat_p(X)(u,v)$, and its inverse on this image is the restriction of $q$. This proves the homeomorphism.

To apply Proposition~\ref{prop:abstract-hurewicz}, put $D(x,s)=s+(1-s)\ell_x(1)$. For $(x,\tau,s)\in D_q$ with $D(x,s)>0$, define
\begin{equation}
\label{eq:homotopy-lift-parameter}
\eta_{x,s}(t)=\frac{s t+(1-s)\ell_x(t)}{s+(1-s)\ell_x(1)},
\end{equation}
and put
\begin{equation}
\label{eq:homotopy-lift}
K(x,\tau,s)=j(\tau)\circ\eta_{x,s}.
\end{equation}
The denominator vanishes exactly when $s=0$ and $x$ is constant. Then $\tau=q(x)$ by the definition of $D_q$, and we set $K(x,q(x),0)=x$.

When $D(x,s)>0$, the map $\eta_{x,s}$ is continuous, its values at $0,1$ are $0,1$, and for $a<b$,
\[
\eta_{x,s}(b)-\eta_{x,s}(a)
=\frac{s(b-a)+(1-s)(\ell_x(b)-\ell_x(a))}{D(x,s)}\geq0.
\]
Consequently $\eta_{x,s}\in\Rep$. Substitution at $s=0$ for non-constant $x$, and at $s=1$ for every $x$, gives
\[
\eta_{x,0}(t)=\frac{\ell_x(t)}{\ell_x(1)}=\lambda_x(t),
\qquad \eta_{x,1}(t)=\frac{t}{1}=t.
\]
Since reparametrization by $\Rep$ preserves the trace and $jq=N_p$, we obtain
\[
\begin{aligned}
qK(x,\tau,s)&=qj(\tau)=\tau,\\
K(x,q(x),0)&=N_p(x)\circ\lambda_x=x,\\
K(x,\tau,1)&=j(\tau)\circ\id_\I=j(\tau).
\end{aligned}
\]
For constant $x$ at $s=0$, the first two identities hold by definition. If $x=j(\tau)$ is non-constant, then $\lambda_x=\id_\I$, so
\[
s t+(1-s)\ell_x(t)=s t+(1-s)t\ell_x(1)=tD(x,s).
\]
Hence $\eta_{x,s}=\id_\I$ and $K(j(\tau),\tau,s)=j(\tau)$. For constant $j(\tau)$, the same identity follows directly from the definition.

Where $D(x,s)>0$, continuity into the ordinary compact-open path space follows from continuity of $\ell$, $j$, and composition. At the remaining points, use
\begin{equation}
\label{eq:homotopy-lift-images}
K(x,\tau,s)(\I)=j(\tau)(\I),
\end{equation}
which follows from surjectivity of $\eta_{x,s}$, or from the definition when $D=0$. Lemma~\ref{lem:constant-path-continuity}, with $G(x,\tau,s)=j(\tau)$, proves continuity there. Since $D_q$ is $\Delta$-generated, $K:D_q\to P$ is continuous. Proposition~\ref{prop:abstract-hurewicz} applies.

Finally, if $|X|$ is $\Delta$-Hausdorff, so is $\Cont_\Delta(\I,|X|)$ \cite[Proposition~B.6]{leftproperflow}. A continuous injective map into a $\Delta$-Hausdorff space has $\Delta$-Hausdorff source: each path image is the inverse image of its closed image in the target. Apply this to $T\xrightarrow{j}P\longrightarrow\Cont_\Delta(\I,|X|)$.
\end{proof}

For paths in $\mathbb R^n$, \cite[arXiv version, Section~4, Definitions~8--9 and Theorem~10]{HoehnOversteegenTymchatyn} also identifies the quotient by nondecreasing reparametrizations with a space of normalized paths, using a different length function. Here the map $K$ additionally gives the homotopy lifting property.

\begin{corollary}
\label{cor:submetrizable}
If a saturated directed space has submetrizable underlying space, every quotient map from directed paths with fixed endpoints to traces is a trivial Hurewicz fibration.
\end{corollary}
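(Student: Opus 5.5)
The plan is to make the space its own clock and then invoke Theorem~\ref{thm:timed-traces}. Let $X$ be a saturated directed space whose underlying space $|X|$ is submetrizable. Then $X$ itself, viewed as a directed space with submetrizable underlying space, is a clock in the sense of Section~\ref{sec:timed-category}. A clock is not required to be saturated, but here it is saturated anyway.

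Consider the identity map $\id_X:X\to X$. It is a directed map, and it is regular: if $\id_X\circ x=x$ is constant for a directed path $x$, then $x$ is constant. Since $X$ is saturated, the pair $(X,\id_X)$ is therefore a timed space over the clock $C=X$, that is, an object of $\Timed(X)$.

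Now apply Theorem~\ref{thm:timed-traces} to the timed space $\id_X:X\to X$. It states that for every $u,v\in|X|$ the quotient
\[
q_{u,v}:\dP(X)(u,v)\longrightarrow\dT(X)(u,v)
\]
is a trivial Hurewicz fibration. The path space $\dP(X)(u,v)$ and the trace space $\dT(X)(u,v)$ depend only on $X$, not on the chosen clock map, so this is exactly the assertion of the corollary. Concretely, the normalization uses $f=\iota$, where $\iota:|X|\to(M,d)$ is a continuous injection into a metric space witnessing submetrizability. No compatibility with the topology beyond continuity is needed, since Proposition~\ref{prop:normalization-continuous} handles continuity in the source topology.

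The argument has no real obstacle. The only thing to check is that nothing in Sections~\ref{sec:normalization} uses a property of the clock beyond submetrizability of $|C|$ and regularity of $p$. In particular, Lemma~\ref{lem:regular-stops} is trivially satisfied by the identity map.
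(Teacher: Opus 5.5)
Your proof is correct and is exactly the paper's argument. The identity map is regular, so a saturated directed space with submetrizable underlying space is a timed space over itself, and Theorem~\ref{thm:timed-traces} then gives the trivial Hurewicz fibration.
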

\begin{proof}
The identity map makes this directed space a timed space over itself.
\end{proof}

\begin{remark}
Normalization depends on the chosen metric injection and the triple $\mathfrak a$. The conclusions about local presentability, the homeomorphism type of traces, and the trivial Hurewicz fibration are independent of these choices.
\end{remark}

\begin{corollary}[Invariant path families]
\label{cor:invariant-family}
Let $\mathcal A\subseteq\Cont_\co(\I,Z)$ be a family of paths with fixed endpoints. Suppose either that $Z$ is submetrizable, or that $Z$ is the underlying space of a timed space and the paths in $\mathcal A$ are directed. Construct $\lambda_x$, $N(x)$ and $H(x,s)$ using, respectively, a continuous injective map into a metric space or the progress function of the timed space. Suppose $N$ and $H$ take their values in $\mathcal A$.

If $\mathcal A$ is stable under $\Rep$, its quotient map by those reparametrizations, with the quotient topology of $k_\Delta\mathcal A$, is a trivial Hurewicz fibration. It admits a continuous section and a strong deformation onto the image of that section which preserves quotient classes. The same conclusions hold for the group $\mathcal G$ of increasing homeomorphisms when $\mathcal A$ consists of non-constant regular paths and is stable under $\mathcal G$.
\end{corollary}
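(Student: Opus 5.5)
The plan is to rerun the proof of Theorem~\ref{thm:timed-traces} with $\dP(X)(u,v)$ replaced by $P=k_\Delta\mathcal A$ and $\dT(X)(u,v)$ replaced by the quotient $T=P/\Rep$, and then to conclude with Proposition~\ref{prop:abstract-hurewicz}. The first step is to set up the progress function. In the submetrizable case, let $\iota:Z\to(M,d)$ be the injection and put $\ell_x=L_{\iota\circ x}$. Since $\iota$ is injective, $\iota\circ x$ is constant on $[a,b]$ exactly when $x$ is, so Proposition~\ref{prop:multiscale-aggregation} yields \eqref{eq:timed-equivariance} and \eqref{eq:timed-stops} for every continuous path. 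In the timed case, use $\ell_x=L_{f\circ x}$ as before, where directedness and Lemma~\ref{lem:regular-stops} give \eqref{eq:timed-stops}. In both cases $(x,t)\mapsto\ell_x(t)$ is continuous in the compact-open topology. Lemmas~\ref{lem:factor-continuity} and~\ref{lem:constant-path-continuity} then make $N$ continuous on $\mathcal A$, with the same argument as Proposition~\ref{prop:normalization-continuous}. The identities of Lemma~\ref{lem:normalization-identities}, namely $N(x\circ\phi)=N(x)$, $N^2=N$ and $\lambda_{N(x)}=\id_\I$, only use uniqueness of the factorization and \eqref{eq:timed-equivariance}, so they persist. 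The hypothesis that $N$ takes values in $\mathcal A$ replaces saturation in this step.

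Next, stability of $\mathcal A$ under $\Rep$ means that the classes of the generated equivalence relation lie in $\mathcal A$ and that $N$ is constant on them. Hence $N$ induces a continuous map $j:T\to P$ with $jq=N$. We also get $qj=\id_T$, since $N(x)\sim x$ via $x=N(x)\circ\lambda_x$. The image $j(T)=N(\mathcal A)$ is homeomorphic to $T$ via $q$, as in the proof of Theorem~\ref{thm:timed-traces}. I would then define $K$ on $D_q$ by \eqref{eq:homotopy-lift-parameter} and \eqref{eq:homotopy-lift}, setting $K(x,q(x),0)=x$ when the denominator vanishes. The values lie in $\mathcal A$ because $j(\tau)\in\mathcal A$ and $\eta_{x,s}\in\Rep$. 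The three identities of Proposition~\ref{prop:abstract-hurewicz} and $K(j(\tau),\tau,s)=j(\tau)$ are verified verbatim, and continuity at points with vanishing denominator uses \eqref{eq:homotopy-lift-images} and Lemma~\ref{lem:constant-path-continuity}. The hypothesis on $H$ guarantees that the deformation $H'(x,s)=K(x,q(x),s)$, which coincides with $N(x)\circ\bigl((1-s)\lambda_x+s\id_\I\bigr)=H(x,s)$, stays in $\mathcal A$ and preserves classes. This gives the trivial Hurewicz fibration, the continuous section, and the strong deformation retraction onto $j(T)$.

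For the group $\mathcal G$, with $\mathcal A$ consisting of non-constant regular paths, every $\lambda_x$ is strictly increasing by \eqref{eq:timed-stops}, hence lies in $\mathcal G$. For $x,y\in\mathcal A$, being $\Rep$-equivalent should then coincide with being $\mathcal G$-equivalent: a zigzag passes through $N(x)=N(y)$, and $x=N(x)\circ\lambda_x$ with $\lambda_x\in\mathcal G$. So the quotient by $\mathcal G$ is the same map as before, and no constant paths occur. The one thing to check is that $\eta_{x,s}$ is a homeomorphism for all $s$. This holds because $\ell_x$ is strictly increasing, which makes $K$ land in $\mathcal A$ by $\mathcal G$-stability.

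I expect the main obstacle to be the topology of the quotient. One must check that the class relation defining $T$ really is generated inside $\mathcal A$, so that $q$ is the quotient of $k_\Delta\mathcal A$ and $j$ descends continuously through the ordinary quotient topology. In the $\mathcal G$ case one must also justify the identification of the two relations. I would first try to settle this directly via the $N$-invariance argument above.
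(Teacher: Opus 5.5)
Your proposal is correct and follows essentially the paper's route: transfer equivariance and stop detection to the progress function, obtain a continuous $N$ via Lemma~\ref{lem:factor-continuity}, descend it to a section using $x=N(x)\lambda_x$, define $K$ by \eqref{eq:homotopy-lift} inside $\mathcal A$ and apply Proposition~\ref{prop:abstract-hurewicz}, with strict monotonicity of $\lambda_x$ and $\eta_{x,s}$ handling the $\mathcal G$ case. One small slip: $K(x,q(x),s)=N(x)\circ\eta_{x,s}$ is not in general equal to $H(x,s)=N(x)\circ\bigl((1-s)\lambda_x+s\id_\I\bigr)$, because for $0<s<1$ the two reparametrizations agree only when $\ell_x(1)=1$ or $\lambda_x=\id_\I$; this is harmless, since $N(x)\circ\eta_{x,s}$ already lies in $\mathcal A$ by stability and is itself the strong deformation retraction supplied by Proposition~\ref{prop:abstract-hurewicz}, whereas the paper takes $H$ as the deformation, which is where the hypothesis on $H$ is used.
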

\begin{proof}
In either case the progress function is continuous, commutes with nondecreasing surjections, and has the same stop intervals as the path, by Proposition~\ref{prop:multiscale-aggregation} or equations~\eqref{eq:timed-equivariance}--\eqref{eq:timed-stops}. Thus Lemma~\ref{lem:factor-continuity} and Theorem~\ref{thm:timed-normalization} give continuous $N$ and $H$ on $k_\Delta\mathcal A$; the assumed invariance of the family replaces saturation. In the $\Rep$ case, $x=N(x)\lambda_x$ makes $N$ descend to a section, and $H$ gives the stated deformation. The formula for $K$ in \eqref{eq:homotopy-lift} stays in the family by stability under $\Rep$. Its continuity follows as in Theorem~\ref{thm:timed-traces}, so Proposition~\ref{prop:abstract-hurewicz} applies.

In the $\mathcal G$ case, stop detection makes the progress function of every path strictly increasing. Hence $\lambda_x$, $(1-s)\lambda_x+s\id_\I$, and $\eta_{x,s}$ are increasing homeomorphisms. The same constructions therefore stay in the family and preserve $\mathcal G$-classes, proving all the conclusions.
\end{proof}

Table~\ref{tab:roles-of-axioms} summarizes how the clock axioms enter the categorical and homotopical arguments.

\begin{table}[htbp]
\centering
\small
\renewcommand{\arraystretch}{1.3}
\begin{tabular}{@{}p{.32\linewidth}@{\hspace{1.4em}}p{.62\linewidth}@{}}
\hline
\textbf{Hypothesis or construction} & \textbf{Role in the argument}\\
\hline
Submetrizability of the clock $C$ & A continuous injective map into a metric space gives the function of Proposition~\ref{prop:multiscale-aggregation}.\\
Regularity of $p:X\to C$ & Progress is constant on an interval if and only if the original directed path is constant there.\\
Saturation of $X$ & The factor $N_p(x)$ in $x=N_p(x)\lambda_x$ is again directed.\\
Continuous monotone factorization & Normalization is continuous in the original topology of $X$, even when the auxiliary metric topology is coarser.\\
\raggedright Proposition~\ref{prop:abstract-hurewicz} and formula~\eqref{eq:homotopy-lift-parameter}\par & Every homotopy of traces lifts with prescribed initial paths; preservation of images proves continuity at constant paths.\\
\raggedright Small-orthogonality class\par & The category $\Timed(C)$ is locally presentable; this step does not require submetrizability.\\
\hline
\end{tabular}
\medskip
\caption{The distinct roles of the clock axioms and the two main arguments. See Theorem~\ref{thm:timed-locally-presentable}, Lemma~\ref{lem:factor-continuity}, and Theorem~\ref{thm:timed-traces}.}
\label{tab:roles-of-axioms}
\end{table}

\section{The cubical clock}
\label{sec:cubical}

The directed circle
\[
\dS=\mathbb R/\mathbb Z
\]
has as directed paths those admitting a continuous nondecreasing lift to $\mathbb R$. It is metrizable and saturated. For saturation, if $x\phi$ has a nondecreasing lift, choose any lift $\widetilde x$ of $x$. Its composite with $\phi$ differs from the given lift by an integer constant. Surjectivity and monotonicity of $\phi$ then imply that $\widetilde x$ is nondecreasing. We call $\dS$ the \emph{cubical clock}. A timed space over this clock is exactly a regular clock map in the sense of \cite{clockmap}.

A \emph{precubical set} $K$ consists of sets $(K_n)_{n\geq0}$ and face maps $\partial_i^\epsilon:K_n\to K_{n-1}$ satisfying
\[
\partial_i^\epsilon\partial_j^\eta=\partial_{j-1}^\eta\partial_i^\epsilon,
\qquad 1\leq i<j\leq n,\quad \epsilon,\eta\in\{0,1\}.
\]
A morphism is a family of maps commuting with the face maps. The realization is
\[
|K|=\left(\coprod_{n\geq0}K_n\times\I^n\right)/{\sim},
\qquad (\partial_i^\epsilon c,t)\sim(c,\delta_i^\epsilon t),
\]
where each $K_n$ is discrete and $\delta_i^\epsilon$ inserts $\epsilon$ in the $i$th coordinate. This is a CW-complex with one open cell for each cube, hence belongs to either choice of $\Top$. Write $\chi_c(t)=[c,t]$ for the characteristic map of $c\in K_n$. The standard directed paths are finite concatenations of paths $\chi_c\gamma$ with $\gamma$ nondecreasing in each coordinate; see \cite[Definitions~2.1--2.2]{MR2521708}.

\begin{lemma}
\label{lem:cubical-saturated}
The standard directed realization of every precubical set is saturated.
\end{lemma}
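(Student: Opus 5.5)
The plan is to unwind the definition of saturation directly. Let $x:\I\to|K|$ be continuous and $\phi\in\Rep$ with $x\circ\phi$ standard directed. Then there is a partition $0=t_0<\cdots<t_r=1$ such that on each $[t_{i-1},t_i]$, after linear reparametrization, $x\circ\phi$ equals $\chi_{c_i}\circ\gamma_i$ with $c_i\in K_{n_i}$ and $\gamma_i:\I\to\I^{n_i}$ nondecreasing in each coordinate. Put $u_i=\phi(t_i)$, a nondecreasing sequence from $0$ to $1$. Surjectivity and monotonicity of $\phi$ give $\phi([t_{i-1},t_i])=[u_{i-1},u_i]$. The restriction $\phi_i$ of $\phi$ to that interval is therefore, after linear rescaling, an element of $\Rep$ whenever $u_{i-1}<u_i$. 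Indices with $u_{i-1}=u_i$ are discarded.

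For each remaining index, we have $x_i\circ\phi_i=\chi_{c_i}\circ\gamma_i$, where $x_i$ is the linearly rescaled restriction of $x$ to $[u_{i-1},u_i]$. We want to factor $\gamma_i=\gamma_i'\circ\phi_i$. Since $\phi_i$ is a quotient map, this holds as soon as $\gamma_i$ is constant on each fibre of $\phi_i$. Such a fibre is a closed interval $J$ on which $\chi_{c_i}\circ\gamma_i$ is constant, so $\gamma_i(J)$ is a connected subset of a single fibre of $\chi_{c_i}:\I^{n_i}\to|K|$. The key point, and the step I expect to need the most care, is that these fibres are finite, hence discrete, which forces $\gamma_i|_J$ to be constant.

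To prove finiteness, I would use the CW structure described before the lemma. A point of $|K|$ lies in a unique open cell, namely $\chi_d$ of the interior of $\I^m$ for a unique $d\in K_m$. A point $s\in\I^{n_i}$ lies in the relative interior of exactly one face of the cube, determined by which coordinates equal $0$ or $1$. On that face, $\chi_{c_i}$ is the characteristic map of an iterated face $\partial\cdots\partial c_i$, composed with the corresponding $\delta$-maps. This map is injective on the open face, since the realization is a CW-complex with one open cell per cube. The cube has finitely many faces, so each fibre of $\chi_{c_i}$ is finite.

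Given the factorization, $\gamma_i'$ is continuous by the quotient property. It is nondecreasing in each coordinate: for $a<b$, pick $s\le s'$ with $\phi_i(s)=a$ and $\phi_i(s')=b$, using surjectivity and monotonicity, and then $\gamma_i'(a)=\gamma_i(s)\le\gamma_i(s')=\gamma_i'(b)$. Hence $x_i=\chi_{c_i}\circ\gamma_i'$ is a basic standard directed path. Finally, $x$ is the concatenation of the $x_i$ along the partition $(u_i)$. Such a concatenation at arbitrary division points is a piecewise-linear increasing reparametrization of an iterated normalized concatenation $*_N$. Directed paths are closed under both operations, so $x$ is standard directed, which proves saturation.
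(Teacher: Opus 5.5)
Your proposal is correct and follows essentially the same route as the paper. Both arguments establish that the fibres of $\chi_c$ are finite, using injectivity on the relative open faces of the cube. Both then factor each $\gamma_i$ through the restricted quotient map $\phi_i$ on a subdivision, obtain monotonicity of the factor from the order of the fibres, and reassemble $x$ after discarding degenerate intervals. You only spell out a few details that the paper leaves implicit, namely the explicit monotonicity check and the reparametrized concatenation at arbitrary division points.
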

\begin{proof}
Every fibre of $\chi_c$ is finite. Indeed, the cube has finitely many relative open faces, and the restriction to each of them is injective onto the corresponding open cell, even when different faces represent the same cell of $|K|$.

Let $x:\I\to|K|$ be continuous and suppose $x\rho$ is directed for $\rho\in\Rep$. Choose a subdivision $0=a_0<\cdots<a_r=1$ and maps $\gamma_i:[a_{i-1},a_i]\to\I^{n_i}$ which are nondecreasing in each coordinate and satisfy
\[
(x\rho)|_{[a_{i-1},a_i]}=\chi_{c_i}\gamma_i.
\]
Put $b_i=\rho(a_i)$. On a fibre of the quotient map
\[
\rho_i:[a_{i-1},a_i]\longrightarrow[b_{i-1},b_i],
\]
the image of $\gamma_i$ is connected and lies in a finite fibre of $\chi_{c_i}$. It is therefore a singleton. Hence $\gamma_i$ factors through $\rho_i$ as a continuous map $\delta_i$. The order of the fibres of $\rho_i$ shows that $\delta_i$ is nondecreasing in each coordinate. Surjectivity gives
\[
x|_{[b_{i-1},b_i]}=\chi_{c_i}\delta_i.
\]
\[
\begin{tikzcd}[column sep=large,row sep=large]
 {[a_{i-1},a_i]}\arrow[r,"\gamma_i"]\arrow[d,two heads,"\rho_i"']&\I^{n_i}\arrow[d,"\chi_{c_i}"]\\
 {[b_{i-1},b_i]}\arrow[r,"x|_{[b_{i-1},b_i]}"']\arrow[ur,dashed,"\delta_i"]&{|K|}.
\end{tikzcd}
\]
Omitting degenerate intervals yields a finite directed presentation of $x$.
\end{proof}

The following formula is the map $s$ introduced in \cite[Section~2.2.1, p.~1721, after Remark~2.5]{MR2521708}. We check its regularity in the present terminology.

\begin{proposition}[Raussen's formula]
\label{prop:cubical-timing}
For every precubical set $K$, Raussen's formula
\[
p_K\chi_c(t_1,\ldots,t_n)=t_1+\cdots+t_n\pmod{\mathbb Z}
\]
defines a regular directed map $p_K:|K|\to\dS$. Thus $(|K|,p_K)$ is a timed space over the cubical clock. This construction is natural in $K$.
\end{proposition}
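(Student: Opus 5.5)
The plan is to check, in order, that $p_K$ is well defined and continuous, that it is directed, that it is regular, and that it is natural. Regularity is the only step with real content.

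\emph{Well defined and continuous.} The formula is compatible with the identifications $(\partial_i^\epsilon c,t)\sim(c,\delta_i^\epsilon t)$. Inserting $\epsilon\in\{0,1\}$ as the $i$th coordinate changes the sum of the coordinates by $0$ or $1$, and this vanishes modulo $\mathbb Z$. On each cube the formula is continuous, so it descends to the quotient $|K|$, which carries the final topology of the coproduct of cubes.

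\emph{Directed.} It suffices to treat the generators $\chi_c\gamma$ with $\gamma$ nondecreasing in each coordinate. Composing gives $t\mapsto\sum_i\gamma_i(t)\bmod\mathbb Z$, and $\sum_i\gamma_i$ is itself a continuous nondecreasing lift to $\mathbb R$. Directed paths of $\dS$ are closed under concatenation and reparametrization, so every directed path of $|K|$ is sent to a directed path.

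\emph{Regular.} I would apply Lemma~\ref{lem:regular-generators} with $\mathcal A$ the paths $\chi_c\gamma$. Suppose $p_K\chi_c\gamma$ is constant on $[a,b]$. The real-valued lift $\sum_i\gamma_i$ is continuous, and its values mod $\mathbb Z$ lie in a single point, so it takes values in a discrete set. Being continuous, it is constant on $[a,b]$. Each $\gamma_i$ is nondecreasing and their sum is constant, so every $\gamma_i$ is constant on $[a,b]$. Hence $\chi_c\gamma$ is constant there, and Lemma~\ref{lem:regular-generators} gives regularity.

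\emph{Timed space.} Lemma~\ref{lem:cubical-saturated} shows that $|K|$ is saturated, so $(|K|,p_K)$ is a timed space over $\dS$.

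\emph{Natural.} A precubical morphism $f:K\to L$ realizes as $|f|\chi_c(t)=\chi_{f(c)}(t)$. It sends standard directed paths to standard directed paths, and $p_L|f|=p_K$ holds cube by cube. So $|f|$ is a morphism in $\Timed(\dS)$.

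The main obstacle is the regularity step, and specifically making sure that the criterion of Lemma~\ref{lem:regular-generators} needs only the generators on subintervals. That reduction is exactly what the lemma provides. After it, the argument is the discreteness of fibres of $\mathbb R\to\mathbb R/\mathbb Z$ combined with the coordinatewise monotonicity of $\gamma$.
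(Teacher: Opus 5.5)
Your proposal is correct and follows the paper's proof essentially step for step. Both arguments check compatibility of the coordinate sum with face identifications modulo $\mathbb Z$ and use the coordinate sum as a nondecreasing real lift; regularity comes from Lemma~\ref{lem:regular-generators} together with constancy of a continuous real function with values in a coset of $\mathbb Z$ and coordinatewise monotonicity, saturation from Lemma~\ref{lem:cubical-saturated}, and naturality from a cube-by-cube computation.
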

\begin{proof}
On a face with inserted coordinate $\epsilon\in\{0,1\}$, the coordinate sum is
\[
\sum_{j=1}^{n}(\delta_i^\epsilon t)_j
=\sum_{j=1}^{i-1}t_j+\epsilon+\sum_{j=i}^{n-1}t_j
=\epsilon+\sum_{j=1}^{n-1}t_j
\equiv\sum_{j=1}^{n-1}t_j\pmod{\mathbb Z}.
\]
The formulas therefore agree on identified faces and give a continuous map on the realization. On a cubical path which is nondecreasing in each coordinate, the sum of the coordinates is a nondecreasing real lift, so $p_K$ is directed.

By Lemma~\ref{lem:regular-generators}, regularity can be checked on subintervals of the paths $\chi_c\gamma$. If $p_K\chi_c\gamma$ is constant on $[a,b]$, the continuous real function $\sum_j\gamma_j$ has values in a coset of $\mathbb Z$ there, and is therefore constant. For $a\leq s<t\leq b$,
\[
0=\sum_{j=1}^n\gamma_j(t)-\sum_{j=1}^n\gamma_j(s)
=\sum_{j=1}^n\bigl(\gamma_j(t)-\gamma_j(s)\bigr).
\]
Every summand is nonnegative, so each coordinate is constant on $[a,b]$. This proves regularity, and Lemma~\ref{lem:cubical-saturated} gives saturation. For a morphism $g:K\to K'$,
\[
p_{K'}|g|\chi_c(t)=p_{K'}\chi_{g(c)}(t)
=\left[\sum_{j=1}^nt_j\right]=p_K\chi_c(t),
\]
which proves naturality.
\end{proof}

\begin{figure}[htbp]
\centering
\begin{tikzpicture}[scale=2.1,>=stealth]
\draw[->] (-0.08,0)--(1.18,0) node[right] {$t_1$};
\draw[->] (0,-0.08)--(0,1.18) node[above] {$t_2$};
\draw (1,0)--(1,1)--(0,1);
\draw[dashed,gray] (0,0.4)--(0.4,0);
\draw[dashed,gray] (0,0.8)--(0.8,0);
\draw[dashed,gray] (0.2,1)--(1,0.2);
\draw[dashed,gray] (0.6,1)--(1,0.6);
\draw[very thick,->] (0,0)--(0.3,0.2)--(0.65,0.25)--(0.75,0.7)--(1,1);
\node[below left] at (0,0) {$0$};
\node[above right] at (1,1) {$2$};
\node at (0.5,-0.32) {$t_1+t_2$};
\draw[->,thick] (1.4,0.52)--(2.02,0.52) node[midway,above] {$\bmod\,\mathbb Z$};
\draw (2.57,0.52) circle (0.43);
\draw[thick,->] (2.57,0.95) arc[start angle=90,end angle=230,radius=0.43];
\fill (3,0.52) circle (0.025);
\node[right] at (3,0.52) {$[0]$};
\node at (2.57,-0.14) {$\dS$};
\end{tikzpicture}
\caption{The sum of cubical coordinates gives a real progress function on each cube. Reduction modulo integers makes the functions agree on all face identifications.}
\label{fig:cubical-clock}
\end{figure}

\begin{corollary}
For every precubical set $K$ and every $u,v\in|K|$, the quotient $q_{u,v}$ is a trivial Hurewicz fibration. The category $\Timed(\dS)$ is locally presentable and contains all these realizations with their canonical maps to $\dS$.
\end{corollary}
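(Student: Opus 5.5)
The corollary follows by assembling results already proved. By Proposition~\ref{prop:cubical-timing}, for every precubical set $K$ the pair $(|K|,p_K)$ is a regular directed map $|K|\to\dS$. The source $|K|$ is saturated by Lemma~\ref{lem:cubical-saturated}. It therefore remains only to check that $\dS$ is a clock, that is, that $|\dS|=\mathbb R/\mathbb Z$ is submetrizable. This is immediate: $\mathbb R/\mathbb Z$ is homeomorphic to the unit circle in $\mathbb R^2$, so the identity embedding into a metric space serves as the required continuous injection. The directed structure of $\dS$ plays no role in this step, and the clock need not even be saturated, although it is.

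Hence $(|K|,p_K)$ is an object of $\Timed(\dS)$. Theorem~\ref{thm:timed-traces} then applies verbatim to every pair $u,v\in|K|$: the quotient $q_{u,v}:\dP(|K|)(u,v)\to\dT(|K|)(u,v)$ is a trivial Hurewicz fibration.

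Local presentability of $\Timed(\dS)$ is the special case $C=\dS$ of Theorem~\ref{thm:timed-locally-presentable}. By the previous paragraph the realizations with their canonical maps are objects of this category. The naturality statement in Proposition~\ref{prop:cubical-timing} shows that $|g|$ commutes with the clock maps for every morphism $g:K\to K'$ of precubical sets, so realization even defines a functor into $\Timed(\dS)$. The statement only asserts containment of objects, but this functoriality is the natural complement.

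There is no real obstacle here. The only point requiring a moment's care is the observation that the realization $|K|$ lies in the chosen $\Top$. It is a CW-complex, hence $\Delta$-generated, and also $\Delta$-Hausdorff in the case $\Top=\TopDH$, as noted when the realization was defined. The same remark covers $\dS$ itself.
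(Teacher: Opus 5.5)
Your proposal is correct and takes essentially the same route as the paper. The paper's proof simply combines Proposition~\ref{prop:cubical-timing} (regularity of $p_K$, with saturation from Lemma~\ref{lem:cubical-saturated}) with Theorems~\ref{thm:timed-locally-presentable} and~\ref{thm:timed-traces}; your explicit checks that $\dS$ is metrizable and that realization is functorial are harmless additions.
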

\begin{proof}
Apply Proposition~\ref{prop:cubical-timing} and Theorems~\ref{thm:timed-locally-presentable} and~\ref{thm:timed-traces}.
\end{proof}

\Needspace{6\baselineskip}
\begin{remark}
The realization of every precubical set is a CW-complex, so Lemma~\ref{lem:cellular-submetrizable} shows that its underlying space is submetrizable. By Lemma~\ref{lem:cubical-saturated}, its standard directed structure is saturated. Thus this realization is itself a saturated clock, and its identity map defines a timed space over itself.
\end{remark}

Raussen's naturalization identifies traces with naturally parametrized paths and uses the same linear interpolation as \eqref{eq:timed-homotopy}; see \cite[Propositions~2.15--2.16]{MR2521708}. His suggested extension uses a continuous additive length function \cite[Remark~2.17]{MR2521708}; the progress function constructed here need not be additive. The homotopy equivalence for regular maps to the circle is also proved in \cite[Theorem~6.7]{clockmap}. The corollary adds the homotopy lifting property.

\section{Globular clocks and execution paths}
\label{sec:globular}

\subsection{Cellular realizations are saturated clocks}

A \emph{cellular space} here is obtained from a discrete space by transfinite attachments of finite-dimensional closed disks along their boundaries, with the final topology at limit stages.

\begin{lemma}
\label{lem:cellular-submetrizable}
Every cellular space admits a continuous injective map into a Hilbert space.  Consequently every retract of a cellular space is submetrizable.
\end{lemma}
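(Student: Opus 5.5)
We build the injection into a Hilbert space by transfinite induction along the cell attachments. Let $X=\injlim_{\alpha<\kappa}X_\alpha$, where $X_0$ is discrete and $X_{\alpha+1}$ is obtained from $X_\alpha$ by attaching a disk $\mathsf D^{n_\alpha}$ along $\phi_\alpha:\mathsf S^{n_\alpha-1}\to X_\alpha$. Cells may be attached one at a time, which only refines the ordinal. The target is $\ell^2(S)$ for an index set $S$ that is the disjoint union of $X_0$ and of an orthogonal block $\mathbb R^{n_\alpha}\oplus\mathbb R$ for each cell. We construct continuous injections $f_\alpha:X_\alpha\to\ell^2(S_\alpha)$ that are compatible under the inclusions $\ell^2(S_\alpha)\subseteq\ell^2(S_\beta)$. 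We also require the norm bound $\|f_\alpha\|\leq 1$ on $X_\alpha$, to control limits.

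\textbf{Start.} On $X_0$, send each point $x$ to $\tfrac12e_x$. Distinct points go to distinct vectors, and the map is continuous because $X_0$ is discrete.

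\textbf{Successor step.} Take a point $z$ of the new disk $\mathsf D^{n}$, written $z=r\omega$ with $\omega\in\mathsf S^{n-1}$ and $r\in\I$. Choose weights $c_\alpha>0$ with $\sum_\alpha c_\alpha^2<\infty$ and $\sum_\alpha c_\alpha\leq\tfrac12$; for instance, well-order the cells and take $c_\alpha=2^{-k}$ along a countable cofinal sequence. If there are uncountably many cells, summability of the norm fails, so instead use weights $c_\alpha=1$ and drop the global norm bound. Define
\[
f_{\alpha+1}(r\omega)=f_\alpha\bigl(\phi_\alpha(\omega)\bigr)+c_\alpha\bigl(r\,z',\;r(1-r)\bigr),
\]
where $z'=z$ is placed in the new block. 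This formula agrees with $f_\alpha\phi_\alpha$ when $r=1$ (the new component vanishes there), so it descends to the pushout. It is continuous, since a pushout in $\Top$ has the final topology. It is injective: interior points have a nonzero new block, and that block, $(z,r(1-r))$, determines $z$. Old points have a zero new block and are separated by $f_\alpha$.

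\textbf{Limit step.} At a limit ordinal, take the union of the maps. Injectivity is clear, because two points already lie in a common stage. Continuity is the delicate step and needs care: the colimit carries the final topology, so continuity of each restriction suffices. This holds in both $\TopD$ and $\TopDH$, since the colimit along closed inclusions of $\Delta$-Hausdorff cellular spaces is the set-theoretic union with the final topology, and the final map is determined stagewise. No norm bound is needed for this, since continuity is checked on each stage separately.

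\textbf{Main obstacle.} The hard part is making sure the coordinates of every point actually lie in $\ell^2(S)$. Each point lies in finitely many cells, namely its carrier and those cells' boundary iterates, but this finiteness needs justification for transfinite attachments. A compact subset of a cellular space meets only finitely many open cells, and this is the standard argument for relative cell complexes whose cells are attached along compact boundaries. Applying it to the image of a boundary sphere shows that each point has finitely many nonzero coordinates. With that, the norms $\|f(x)\|$ are finite and weights $c_\alpha=1$ suffice.

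\textbf{Retracts.} Let $r:X\to A$ be a retraction with section $i:A\to X$. Then $f\circ i$ is a continuous injection of $A$ into the Hilbert space, because $i$ is injective. Hence every retract of a cellular space is submetrizable.
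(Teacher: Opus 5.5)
\textbf{There is a genuine gap in the successor step: your formula does not define a map on the disk.} In $f_{\alpha+1}(r\omega)=f_\alpha(\phi_\alpha(\omega))+c_\alpha(rz',r(1-r))$ the first summand does not depend on $r$. So at the center $r=0$ the value depends on the undetermined direction $\omega$. Along the ray $r\omega$ the values tend to $f_\alpha\phi_\alpha(\omega)$, and since $f_\alpha$ is injective these limits agree only when $\phi_\alpha$ is constant. What you have written is a map on $\mathsf S^{n-1}\times\I$, not on its quotient $\mathsf D^n$.

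The new block also fails at both ends:
\begin{itemize}
\item At $r=1$ it equals $c_\alpha(\omega,0)\neq0$. So the map does not restrict to $f_\alpha\phi_\alpha$ on the boundary and does not descend to the pushout, contrary to your claim that the new component vanishes there.
\item At $r=0$ it equals $0$. So the center is not separated from the old points, contradicting ``interior points have a nonzero new block''.
\end{itemize}

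\textbf{The missing idea} is to use the linear structure of the Hilbert space to cone off the attaching map. Replace the first summand by $r\,f_\alpha(\phi_\alpha(\omega))$. This is zero at the center and continuous there, because $f_\alpha\phi_\alpha(\mathsf S^{n-1})$ is compact and hence bounded. In the new block, put a term that vanishes exactly on $\mathsf S^{n-1}$ and is nonzero and injective on the whole open disk, such as $(1-r)(1,z)$. Its first coordinate recovers $r$, and dividing the rest by $1-r$ recovers $z$. This is exactly the paper's $B+v$, with $B(z)=\|z\|\,b(z/\|z\|)$ and $v(z)=(1-\|z\|)(1,z)$. For $n=0$ one simply adds a new unit coordinate, a case your $r\omega$ parametrization does not cover.

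Your ``main obstacle'' is not an obstacle. By induction $f_\alpha$ takes values in $\ell^2(S_\alpha)$. The corrected successor formula adds a scalar multiple of such a value to a vector in a single new finite-dimensional block. So every value lies in $\ell^2(S)$ without any finiteness-of-carriers argument, any weights $c_\alpha$, or any norm bound. The proposed weights $2^{-k}$ along a cofinal sequence are in any case not a choice of positive weights for all cells, and that whole discussion should be dropped.

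Your limit step and retract argument are correct and agree with the paper.
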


\begin{proof}
Let $\mathcal H$ be the Hilbert direct sum of one copy of $\mathbb R$ for every initial point and one copy of $\mathbb R^{m+1}$ for every attached $m$-disk. The summands are mutually orthogonal. Map the initial points to their distinct unit basis vectors. We construct a continuous injective map at each stage, using only the coordinates assigned to that stage or to earlier stages.

Suppose that a disk $\mathsf{D}^m$ with $m\geq1$ is attached by $a:\mathsf{S}^{m-1}\to Z$ and that $F:Z\to\mathcal H$ has already been constructed.  Write $b=F a$.  Its radial extension to the previously used coordinates is
\[
 B(z)=
 \begin{cases}
 \|z\|\,b(z/\|z\|),&z\ne0,\\
 0,&z=0.
 \end{cases}
\]
Since the domain of $b$ is compact, $C=\sup_{u\in\mathsf S^{m-1}}\|b(u)\|$ is finite. For $z\ne0$,
\[
\|B(z)-B(0)\|=\|z\|\,\|b(z/\|z\|)\|\leq C\|z\|\longrightarrow0
\quad\text{as }z\to0.
\]
Thus $B$ is continuous at the origin. In the summand assigned to this disk, add
\[
 v(z)=(1-\|z\|)(1,z)\in\mathbb R^{m+1}.
\]
Thus the new characteristic map into $\mathcal H$ is $B+v$.  Its boundary value is $b$, because $v$ vanishes on $\mathsf{S}^{m-1}$.  In the interior, its component in this summand is nonzero and determines $z$: its first coordinate determines $1-\|z\|$, and division of the remaining coordinates by this positive number gives $z$.  Therefore distinct interior points remain distinct, and no interior point is identified with an earlier point.  The pushout topology proves continuity.  For a $0$-disk, simply use a new unit coordinate.
\[
\begin{tikzcd}[column sep=large,row sep=large]
 \mathsf S^{m-1}\arrow[r,"a"]\arrow[d,hook]&Z\arrow[d,hook]\arrow[ddr,bend left=20,"F"]&\\
 \mathsf D^m\arrow[r]\arrow[drr,bend right=15,"B+v"']&Z\sqcup_a\mathsf D^m\arrow[ul,phantom,"\ulcorner",very near start]\arrow[dr,dashed]&\\
 &&\mathcal H
\end{tikzcd}
\]

At a limit ordinal, take the union of the maps already constructed.  It remains injective, and it is continuous by the final topology.  The resulting map into $\mathcal H$ is the required continuous injective map.  If $A$ is a retract of $Z$, its section $A\to Z$ is a continuous injective map; composing with $F$ proves the final assertion.
\end{proof}

\begin{remark}
The map in Lemma~\ref{lem:cellular-submetrizable} need not be an embedding. Submetrizability does not require the given topology to be metrizable.

For CW-complexes, submetrizability follows from classical results: every CW-complex is stratifiable \cite[Section~1.3, Exercise~2, p.~33]{MR1074175}, and every stratifiable space is paracompact with a $G_\delta$-diagonal and hence submetrizable \cite[Introduction, (A) and (C), and Lemma~8.2]{BorgesStratifiable}. The radial extension and the additional coordinates which are zero on the boundary of each cell also occur in \cite[proof of Theorem~1.5.15, pp.~46--48]{MR1074175} for locally finite countable finite-dimensional CW-complexes. The proof above applies to arbitrary transfinite cell attachments, with no restriction on the order of the dimensions.
\end{remark}

Let $\mathcal M$ be the reparametrization category whose objects are the positive real numbers and whose maps $\mathcal M(\ell,\ell')$ are the continuous nondecreasing surjections $[0,\ell]\to[0,\ell']$. Let $\mathcal G\subset\mathcal M$ have the same objects and only the increasing homeomorphisms as maps. Composition is composition of continuous maps, and each space of maps has the $\Delta$-kelleyfication of its relative compact-open topology. In particular, $\mathcal M(1,1)=\Rep$.

For $\mathcal P\in\{\mathcal G,\mathcal M\}$, a \emph{$\mathcal P$-multipointed $d$-space} $A$ is a triple $(|A|,A^0,\mathbb P^{\mathrm{top}}A)$ consisting of an underlying space $|A|\in\Top$, a set of states $A^0\subset|A|$, and a set $\mathbb P^{\mathrm{top}}A$ of continuous maps $\I\to|A|$, called \emph{execution paths}, satisfying the following axioms:
\begin{enumerate}
\item Every execution path $x$ satisfies $x(0),x(1)\in A^0$.
\item If $x$ is an execution path and $\phi\in\mathcal P(1,1)$, then $x\phi$ is an execution path.
\item If $x,y$ are execution paths with $x(1)=y(0)$, then their normalized composition $x*_Ny$ is an execution path.
\end{enumerate}
Constant paths are not required to be execution paths. A morphism $f:A\to B$ is a continuous map $|f|:|A|\to|B|$ such that $|f|(A^0)\subset B^0$ and $|f|\circ x\in\mathbb P^{\mathrm{top}}B$ for every $x\in\mathbb P^{\mathrm{top}}A$. The resulting category is denoted by $\mathcal P\mathsf{dTop}$; see \cite[Section~3]{Moore3}. We use the q-model structures of that paper. In particular, a q-cofibrant object is a retract of a cellular object.

For $\alpha,\beta\in A^0$, the execution-path space $\mathbb P^{\mathrm{top}}_{\alpha,\beta}A$ consists of the execution paths from $\alpha$ to $\beta$, with the $\Delta$-kelleyfication of the relative compact-open topology.

For a space $Z$, the topological globe $\Glob_{\mathcal P}(Z)$ has underlying space
\[
\bigl(\{0,1\}\sqcup(Z\times\I)\bigr)/\bigl((z,0)\sim0,\ (z,1)\sim1\bigr),
\]
states $0,1$, and execution paths $t\mapsto(z,\phi(t))$ for $z\in Z$ and $\phi\in\mathcal P(1,1)$. For fixed $z$, the path $t\mapsto[z,t]$ is called a \emph{meridian} of the globe. Cellular objects start from a discrete set of states and attach such globes along $\Glob_{\mathcal P}(\mathsf{S}^{n-1})\to\Glob_{\mathcal P}(\mathsf{D}^n)$.

The generating q-cofibrations also include the maps $\varnothing\to\{0\}$ and $\{0,1\}\to\{0\}$ between discrete state spaces. Nevertheless, every cellular object admits the presentation just described; see \cite[Section~4]{GlobularNaturalSystem}. To verify this for either reparametrization category, let $S$ be the final set of states of a cellular decomposition and replace every stage $A_\nu$ by
\[
\widehat A_\nu=A_\nu\sqcup_{A_\nu^0}S,
\]
\[
\begin{tikzcd}[column sep=large,row sep=large]
 A_\nu^0\arrow[r]\arrow[d]&A_\nu\arrow[d]\\
 S\arrow[r]&\widehat A_\nu\arrow[ul,phantom,"\ulcorner",very near start],
\end{tikzcd}
\]
where $A_\nu^0$ and $S$ carry the discrete multipointed $d$-space structure. Since pushouts commute, additions and identifications of states become identity maps, whereas globular attachments remain globular attachments. The resulting presentation starts with $S$ and has the same colimit: if the original colimit is $A$, then $A^0=S$ and $A\sqcup_{A^0}S=A$. Thus every q-cofibrant object is a retract of an object with such a globular presentation.

The functor
\[
 \Sp:\mathcal M\mathsf{dTop}\longrightarrow\dTop
\]
is the left adjoint to the full and faithful functor $\Om$ of \cite[Theorems~3.8--3.9]{GlobularNaturalSystem}.  It preserves the underlying space and generates a directed structure from the execution paths by adding constant paths, taking nondecreasing restrictions and changes of parameter, and concatenating.  A cellular multipointed $d$-space is constructed by globular attachments
\[
\begin{tikzcd}
 \Glob_{\mathcal M}(\mathsf{S}^{n-1})
 \arrow[r]\arrow[d,hook]
 & A_\nu\arrow[d]\\
 \Glob_{\mathcal M}(\mathsf{D}^n)
 \arrow[r]&A_{\nu+1}\arrow[ul,phantom,"\ulcorner",very near start].
\end{tikzcd}
\]
On underlying spaces, these are disk attachments of dimension $n+1$. These pushouts and colimits, calculated in the category of topological spaces, are $\Delta$-generated. Lemma~\ref{lem:cellular-submetrizable} gives a continuous injective map from each of these spaces to a Hilbert space. They are therefore Hausdorff and belong to either choice of $\Top$. Thus these colimits are also colimits in $\Top$, and Lemma~\ref{lem:cellular-submetrizable} applies to $|A|$.

\begin{lemma}
\label{lem:change-reparametrizations}
For a $\mathcal G$-multipointed $d$-space $A$, let $A^{\mathcal M}$ have the same underlying space and states and execution paths
\[
 \{x\phi\mid x\in\mathbb P^{\mathrm{top}}A,\ \phi\in\Rep\}.
\]
This construction is left adjoint to the forgetful functor from $\mathcal M$-multipointed $d$-spaces to $\mathcal G$-multipointed $d$-spaces. It preserves cellular and q-cofibrant objects.
\end{lemma}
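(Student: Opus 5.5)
First I check that $A^{\mathcal M}$ is an $\mathcal M$-multipointed $d$-space. The endpoints of $x\phi$ are those of $x$, since $\phi$ fixes $0$ and $1$, so they lie in $A^0$. Stability under $\Rep$ is immediate because $\Rep$ is a monoid, and $\mathcal G(1,1)\subset\Rep$ shows that the old execution paths are included. For concatenation, I use the identity $(x\phi)*_N(y\psi)=(x*_Ny)\circ(\phi\sqcup\psi)$. Here $\phi\sqcup\psi$ acts as $\phi$ and $\psi$ on the two halves, after linear rescaling, and belongs to $\Rep$; this is the observation already used in Lemma~\ref{lem:trace-splitting}. Since $x*_Ny$ is an execution path of $A$, the concatenation has the required form.

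Next I prove the adjunction. Let $B$ be an $\mathcal M$-multipointed $d$-space and $U$ the forgetful functor. A continuous map $f:|A|\to|B|$ is a morphism $A\to UB$ if and only if it is a morphism $A^{\mathcal M}\to B$. One direction holds because every $x\in\mathbb P^{\mathrm{top}}A$ is an execution path of $A^{\mathcal M}$ (take $\phi=\id$). For the other, $f\circ x\in\mathbb P^{\mathrm{top}}B$ implies $f\circ x\phi\in\mathbb P^{\mathrm{top}}B$ because $B$ is $\Rep$-stable. The bijection is the identity on underlying maps, so it is natural.

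For preservation of cofibrancy, a left adjoint preserves colimits, so cellular towers and pushouts go to cellular towers and pushouts. It therefore suffices to compute the image of the generating cofibrations. On discrete objects with no execution paths, the construction is the identity, so $\varnothing\to\{0\}$ and $\{0,1\}\to\{0\}$ are unchanged. For globes, I need $\Glob_{\mathcal G}(Z)^{\mathcal M}=\Glob_{\mathcal M}(Z)$. The underlying spaces and states agree. The execution paths of the left-hand side are $t\mapsto(z,\psi\phi(t))$ with $\psi$ a homeomorphism and $\phi\in\Rep$, and these are exactly the paths $(z,\phi')$ with $\phi'\in\Rep$. Hence the generating maps $\Glob_{\mathcal G}(\mathsf S^{n-1})\to\Glob_{\mathcal G}(\mathsf D^n)$ go to the corresponding $\mathcal M$-generators, and cellular objects go to cellular objects. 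A retract diagram is preserved by any functor, so q-cofibrant objects, being retracts of cellular ones, are preserved as well.

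The step needing the most care is the identification of the globes together with the claim that the generating q-cofibrations of \cite{Moore3} are exactly these maps for both $\mathcal P$. I would cite the definitions in \cite{Moore3} for that. I would also check that $\Glob_{\mathcal P}(Z)$ has no execution paths beyond the stated meridian reparametrizations, so that no extra paths arise on the $\mathcal G$ side; this also follows from the definitions in \cite{Moore3}.
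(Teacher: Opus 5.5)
Your proof is correct and follows essentially the same route as the paper. You check closure under $\Rep$ and under normalized composition via the piecewise reparametrization on the two halves, and you get the adjunction from $\Rep$-stability of execution paths in the target. Cellular and q-cofibrant objects are preserved because the left adjoint preserves colimits, fixes discrete state spaces, sends $\Glob_{\mathcal G}(Z)$ to $\Glob_{\mathcal M}(Z)$, and preserves retracts. Your remaining worry about extra execution paths in the globes is settled by the definition used here: a meridian goes from $0$ to $1$, and since $0\neq1$, no two meridians can be concatenated.
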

\begin{proof}
The displayed family is closed under $\mathcal M$-reparametrization. It is closed under normalized composition because
\[
 (x\phi)*_N(y\psi)=(x*_Ny)\theta,
 \qquad
 \theta(t)=
 \begin{cases}
 \frac12\phi(2t),&0\leq t\leq\frac12,\\
 \frac12+\frac12\psi(2t-1),&\frac12\leq t\leq1.
 \end{cases}
\]
The map $\theta$ is continuous and nondecreasing, with endpoints $0,1$. Substitution gives
\[
((x*_Ny)\theta)(t)=
\begin{cases}
x(2\theta(t))=x(\phi(2t)),&0\leq t\leq\frac12,\\
y(2\theta(t)-1)=y(\psi(2t-1)),&\frac12\leq t\leq1,
\end{cases}
\]
which is $(x\phi)*_N(y\psi)$. The adjunction follows from closure under $\mathcal M$-reparametrization in the target. The left adjoint preserves colimits, fixes discrete state spaces, and sends $\Glob_{\mathcal G}(Z)$ to $\Glob_{\mathcal M}(Z)$. Hence it preserves cellular decompositions and their retracts. It is the functor $\mathrm F^{\mathcal M}_{\mathcal G}$ of \cite[Notation~21]{Moore3}.
\end{proof}

For a $\mathcal G$-multipointed $d$-space $A$, we use the notation $\Sp(A)=\Sp(A^{\mathcal M})$. This is exactly the directed structure generated by the original execution paths of $A$, since nondecreasing reparametrizations are already allowed in a directed space.

\begin{lemma}
\label{lem:globular-saturated}
If $A$ is a q-cofibrant $\mathcal M$-multipointed $d$-space, then $\Sp(A)$ is saturated. The execution paths of $A$ are also saturated: if $x\rho$ is an execution path of $A$ for a continuous $x$ and $\rho\in\Rep$, then $x$ is an execution path of $A$.
\end{lemma}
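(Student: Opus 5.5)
The plan is to reduce to cellular objects, use a finite "meridian" decomposition of execution paths, and then factor through $\rho$ piece by piece, as in the proof of Lemma~\ref{lem:cubical-saturated}.

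\emph{Reduction to the cellular case.} Let $A$ be a retract of a cellular $B$, with section $i:A\to B$ and retraction $r:B\to A$. If $x\rho$ is an execution path of $A$, then $(ix)\rho$ is an execution path of $B$. Saturation in $B$ makes $ix$ an execution path of $B$, so $x=r(ix)$ is an execution path of $A$. The same argument works for directed paths of $\Sp(A)$, because $\Sp$ is a functor and $ri=\id_A$. Since a cellular object admits a globular presentation starting from its final set of states, as explained before Lemma~\ref{lem:change-reparametrizations}, it suffices to treat cellular $A$ with such a presentation.

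\emph{Structure of paths.} For cellular $A$, I would use the description of execution paths from \cite{Moore3,GlobularNaturalSystem}, proved by transfinite induction on the attachments. Every execution path is a finite concatenation, after a subdivision $0=a_0<\cdots<a_r=1$, of pieces $t\mapsto[z_i,\phi_i(t)]$. Here $z_i$ is a fixed point of the disk of one attached globe, $\phi_i$ is a continuous nondecreasing surjection of $[a_{i-1},a_i]$ onto $\I$, and the endpoints of the pieces are states. Directed paths of $\Sp(A)$ are then constant, or finite concatenations of restrictions of such pieces, that is, maps $[z_i,\phi_i]$ with $\phi_i$ nondecreasing but not necessarily surjective. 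In that case the endpoints need not be states. This step carries the real content and is where I expect the main obstacle. One must check that the decomposition survives each pushout and each limit stage: by compactness, a path meets only finitely many cells. One must also check that a piece lying in a newly attached globe either stays on one meridian between states or passes into the earlier stage.

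\emph{Factorization.} Suppose $x\rho$ has such a decomposition, and put $b_i=\rho(a_i)$ and $\rho_i=\rho|_{[a_{i-1},a_i]}$. On a fibre of $\rho_i$, the path $x\rho$ is constant. By injectivity of the characteristic map on the open meridian $\{z_i\}\times\,]0,1[$, the value $\phi_i$ is then also constant on that fibre. At parameter values $0$ or $1$ the point is a state, and a connected fibre on which the path stays at that state has $\phi_i$ constant there as well. Hence $\phi_i=\psi_i\rho_i$ for a unique continuous $\psi_i$, which is nondecreasing because $\rho_i$ is. It is surjective whenever $\phi_i$ is, and surjectivity of $\rho_i$ gives $x|_{[b_{i-1},b_i]}=[z_i,\psi_i]$. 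I would omit degenerate intervals; on those $x$ is constant at a state. Reassembling the pieces by normalized composition and an $\mathcal M$-reparametrization, using the formula for $\theta$ in the proof of Lemma~\ref{lem:change-reparametrizations}, shows that $x$ is an execution path, respectively a directed path of $\Sp(A)$.
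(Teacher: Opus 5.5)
Your argument is correct, but it handles the cellular case by a different route from the paper; the retract step is the same.

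The paper quotes \cite[Theorems~4.7 and~4.9]{GlobularNaturalSystem}: every non-constant directed path of a cellular object is $\eta\theta$, with $\eta:[0,m]\to|A|$ a regular Moore execution path and $\theta$ nondecreasing. For execution paths, globular naturalization makes $\theta$ surjective. Regularity of $\eta$ forces $\theta$ to be constant on each fibre of $\rho$, so the single factorization $\theta=\bar\theta\rho$ settles the cellular case.

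You instead factor piece by piece, as in Lemma~\ref{lem:cubical-saturated}. Injectivity of the characteristic map on open meridians plays the role of regularity of $\eta$. What this buys: you only need the description of execution paths after a pushout or transfinite composition as those generated by old paths and new meridians. The finiteness of the decomposition comes from that generation, not from compactness. Surjectivity of each $\psi_i$ then follows directly from that of $\phi_i$, without any naturalization theorem. The price is bookkeeping.

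One point must be made explicit: every $z_i$ has to lie in the open disk $\mathsf D^{n}\setminus\mathsf S^{n-1}$. For $z_i\in\mathsf S^{n-1}$, the meridian $[z_i,-]$ is the image of an old execution path. In the $\mathcal M$ case that path can have stop intervals, for instance if the attaching map sends that meridian to $a\phi$ with $\phi\in\Rep$ constant on a subinterval. Then the injectivity claim fails for that piece, and so does the factorization of $\phi_i$ through $\rho_i$. The fix is a transfinite induction that replaces each boundary meridian by its decomposition in the earlier stage. That is presumably what you mean by a piece that ``passes into the earlier stage,'' but the interior condition should be stated in the decomposition itself.
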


\begin{proof}
Suppose first that $A$ is cellular. By \cite[Theorems~4.7 and~4.9]{GlobularNaturalSystem}, every non-constant directed path has the form $\eta\theta$, where $\eta:[0,m]\to|A|$ is a regular Moore execution path, $m\geq1$, and $\theta:\I\to[0,m]$ is continuous and nondecreasing. If the path is an execution path, $\theta$ can be chosen surjective by globular naturalization.

Let $x\rho$ be directed, where $x:\I\to|A|$ is continuous and $\rho\in\Rep$. The constant case is immediate. Otherwise write $x\rho=\eta\theta$. On every fibre $F$ of $\rho$, the map $\eta$ is constant on the interval $\theta(F)$, so regularity makes $\theta(F)$ a singleton. The quotient map $\rho$ therefore gives a continuous nondecreasing factor $\bar\theta$:
\[
\begin{tikzcd}[column sep=large,row sep=large]
\I\arrow[r,"\theta"]\arrow[d,two heads,"\rho"']
&{[0,m]}\arrow[d,"\eta"]\\
\I\arrow[r,"x"']\arrow[ur,dashed,"\bar\theta"]&{|A|}.
\end{tikzcd}
\]
Thus $x=\eta\bar\theta$ is directed. If $\theta$ is surjective, so is $\bar\theta$, proving saturation of execution paths as well.

For a q-cofibrant $A$, choose a cellular object $B$ and maps $A\xrightarrow{i}B\xrightarrow{r}A$ with $ri=\id_A$. If $x\rho$ is a directed path, or an execution path, in $A$, saturation in $B$ makes $ix$ a path of the same kind. Applying $r$ proves the corresponding saturation assertion for $A$.
\end{proof}

\begin{theorem}
\label{thm:cofibrant-saturated-clock}
For every q-cofibrant $\mathcal P$-multipointed $d$-space $A$, where $\mathcal P\in\{\mathcal G,\mathcal M\}$, the directed space $\Sp(A)$ is itself a saturated clock. In particular, its identity map makes it a timed space over itself.
\end{theorem}
\begin{proof}
For $\mathcal P=\mathcal M$, the underlying space is a retract of a cellular space and is therefore submetrizable by Lemma~\ref{lem:cellular-submetrizable}. Lemma~\ref{lem:globular-saturated} proves saturation. For $\mathcal P=\mathcal G$, apply the same argument to $A^{\mathcal M}$, which is q-cofibrant by Lemma~\ref{lem:change-reparametrizations} and has the same underlying space as $A$. In both cases the identity map is regular.
\end{proof}

\subsection{Regular maps to a fixed globular clock}

Fix a cellular q-cofibrant replacement $Q=\mathbf1^{\mathrm{cell}}\to\mathbf1$ of the final $\mathcal M$-multipointed $d$-space. The object $Q$ and all metric data on its underlying space will be fixed independently of the source of a clock map.

\begin{definition}
The \emph{globular clock} associated with this replacement is
\[
 \Cg=\Sp(Q)=\Sp(\mathbf1^{\mathrm{cell}}).
\]
\end{definition}

\begin{corollary}
\label{cor:globular-clock}
The globular clock is a saturated clock. The category $\Timed(\Cg)$ is locally presentable, and every object $p:X\to\Cg$ has a trivial Hurewicz fibration $q_{u,v}$ from each space of directed paths with fixed endpoints to its trace space.
\end{corollary}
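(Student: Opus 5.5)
The corollary follows by combining three earlier results, with no new construction needed.

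\emph{Step 1: $\Cg$ is a saturated clock.} By definition, $Q=\mathbf1^{\mathrm{cell}}$ is a cellular $\mathcal M$-multipointed $d$-space. A cellular object is in particular q-cofibrant: it is a retract of itself, and a retract of a cellular object is q-cofibrant. Theorem~\ref{thm:cofibrant-saturated-clock} with $\mathcal P=\mathcal M$ then says that $\Sp(Q)=\Cg$ is a saturated clock. Concretely, $|Q|$ is a cellular space, so Lemma~\ref{lem:cellular-submetrizable} gives a continuous injection into a Hilbert space, and Lemma~\ref{lem:globular-saturated} gives saturation. The only point to confirm is that the underlying space of $\Sp(Q)$ is $|Q|$ in $\Top$. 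This holds because $\Sp$ preserves underlying spaces, and the preceding discussion shows that the cellular colimits are Hausdorff and $\Delta$-generated, so they lie in either choice of $\Top$.

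\emph{Step 2: local presentability.} Since $\Cg$ is a clock, Theorem~\ref{thm:timed-locally-presentable} applies directly and shows that $\Timed(\Cg)$ is locally presentable. Saturation of $\Cg$ is not needed here; it only means that the reduction $C^{\circ}=C$ from Lemma~\ref{lem:saturated-clock-part} is trivial.

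\emph{Step 3: trivial Hurewicz fibrations.} For every object $p:X\to\Cg$ of $\Timed(\Cg)$ and every $u,v\in|X|$, Theorem~\ref{thm:timed-traces} applies with $C=\Cg$. For this we fix a continuous injection $\iota:|\Cg|\to\mathcal H$ from Step~1 and a triple $\mathfrak a$ as in Notation~\ref{not:multiscale-profile}. The theorem then gives that $q_{u,v}:\dP(X)(u,v)\to\dT(X)(u,v)$ is a trivial Hurewicz fibration.

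No step is a real obstacle. The only care point is Step~1: checking that "cellular q-cofibrant replacement" yields an object to which Theorem~\ref{thm:cofibrant-saturated-clock} applies, namely that cellular implies q-cofibrant and that the underlying-space conventions agree.
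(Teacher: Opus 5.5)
Your proposal is correct and matches the paper's proof: it applies Theorem~\ref{thm:cofibrant-saturated-clock} to $Q$, then obtains the remaining conclusions from Theorems~\ref{thm:timed-locally-presentable} and~\ref{thm:timed-traces}. Your extra checks, that cellular implies q-cofibrant and that the underlying spaces lie in $\Top$, spell out points the paper treats as already established.
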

\begin{proof}
Theorem~\ref{thm:cofibrant-saturated-clock} applies to $Q$. The conclusions follow from Theorem~\ref{thm:timed-locally-presentable} and Theorem~\ref{thm:timed-traces}.
\end{proof}

The remaining argument places all q-cofibrant globular directed spaces over this same clock. We first construct a homotopy that makes a cellular lift regular.

\begin{lemma}
\label{lem:clock-regularization}
The object $Q$ has a unique state $*$. Let
\[
 E_Q=\mathbb P^{\mathrm{top}}_{*,*}Q.
\]
Every continuous map $\mathsf{S}^{n-1}\to E_Q$ extends to $\mathsf{D}^n\to E_Q$, for every $n\geq0$. Moreover there is a continuous map
\[
 R:E_Q\times\I\longrightarrow E_Q
\]
such that $R(\gamma,0)=\gamma$ and $R(\gamma,s)$ is a non-constant regular path for every $s>0$.
\end{lemma}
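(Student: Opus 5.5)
The plan has three parts: the state assertion, the extension property, and the regularization homotopy. The first two come from the fact that $Q\to\mathbf1$ is a trivial q-fibration. The third comes from normalizing inside the saturated clock $\Sp(Q)$ of Corollary~\ref{cor:globular-clock}.

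\emph{States.} A trivial fibration of the model structure of \cite{Moore3} has the right lifting property with respect to the generating cofibrations $\varnothing\to\{0\}$ and $\{0,1\}\to\{0\}$.
\begin{itemize}
\item Lifting against $\varnothing\to\{0\}$ gives a state of $Q$.
\item Lifting against $\{0,1\}\to\{0\}$ does the following. Send $0,1$ to two arbitrary states $\alpha,\beta$ of $Q$; the bottom map to $\mathbf1$ is forced. A lift $\{0\}\to Q$ then shows $\alpha=\beta$.
\end{itemize}
So $Q$ has a unique state $*$.

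\emph{Extensions.} Lift against the generating cofibrations $\Glob_{\mathcal M}(\mathsf S^{n-1})\to\Glob_{\mathcal M}(\mathsf D^n)$. By the adjunction between globes and execution-path spaces, a map $\Glob_{\mathcal M}(Z)\to Q$ sending $0,1$ to $*$ is the same as a continuous map $Z\to E_Q$. The meridian of $z$ corresponds to the execution path $z\mapsto[z,-]$, and this correspondence is compatible with $\mathcal M$-reparametrization. The unique map to $\mathbf1$ imposes no condition, since $\mathbb P^{\mathrm{top}}\mathbf1$ consists of the constant path. A lift in the square is therefore exactly an extension $\mathsf D^n\to E_Q$ of a given $\mathsf S^{n-1}\to E_Q$. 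I would check carefully here that the $\Delta$-kelleyfied compact-open topology on $E_Q$ is the one making this correspondence bijective on continuous maps; this is the mapping-space adjunction of \cite{Moore3}.

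\emph{Regularization.} I would take $R$ to be the homotopy $H$ of \eqref{eq:timed-homotopy} for the timed space $\id:\Sp(Q)\to\Sp(Q)$, restricted to $E_Q$. This uses a fixed continuous injection of $|Q|$ into a Hilbert space, which exists by Lemma~\ref{lem:cellular-submetrizable}. The steps, in order, are as follows.
\begin{enumerate}
\item Since $Q$ is cellular, every execution path has the form $\eta\theta$ with $\eta$ a regular Moore execution path of length $m\geq1$ and $\theta$ surjective \cite[Theorems~4.7 and~4.9]{GlobularNaturalSystem}. Hence every element of $E_Q$ is non-constant.
\item Consequently $N(\gamma)$ is regular, by Lemma~\ref{lem:normalization-identities}.
\item The factorization $\gamma=N(\gamma)\lambda_\gamma$ and the saturation of execution paths in Lemma~\ref{lem:globular-saturated} show that $N(\gamma)\in E_Q$.
\item Closure under $\Rep$ then gives $H(\gamma,s)=N(\gamma)\sigma_s\in E_Q$.
\item For $s>0$, $\sigma_s$ is an increasing homeomorphism. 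So $H(\gamma,s)$ is a non-constant regular path, and $H(\gamma,0)=\gamma$.
\end{enumerate}
Continuity on $E_Q\times\I$ follows from Corollary~\ref{cor:invariant-family} applied to the $\Rep$-stable family $\mathcal A=E_Q$, together with Lemma~\ref{lem:interval-product}.

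I expect the main obstacle to be step~1, that execution paths of $Q$ are never constant, since the later steps only need saturation from Lemma~\ref{lem:globular-saturated}. If the cited structure theorem did not give step~1 directly, my fallback would be an induction on the cellular presentation. Each execution path is a finite concatenation of reparametrized meridians of attached globes. A meridian is non-constant because it joins the two poles through an interior point of the attached cell.
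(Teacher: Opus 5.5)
Your proposal is correct and follows the paper's proof essentially step by step. The state and extension assertions come from $Q\to\mathbf1$ being a q-trivial fibration, by lifting against the state generators and, via the globe/path-space adjunction, against the globe inclusions. The map $R$ is the normalization homotopy $H$ for the identity timed space on $\Cg$: elements of $E_Q$ are non-constant by the cellular description, $N(\gamma)$ and $H(\gamma,s)$ lie in $E_Q$ by Lemma~\ref{lem:globular-saturated} and $\Rep$-closure, and continuity follows from Corollary~\ref{cor:invariant-family}.

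For step~1, note that the structure theorem, as used in Lemma~\ref{lem:globular-saturated}, is stated only for non-constant paths. Non-constancy therefore really rests on your fallback argument, which is exactly the ``cellular carrier description'' the paper invokes.
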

\begin{proof}
The q-trivial fibration $Q\to\mathbf1$ is bijective on states, and its map $E_Q\to\{*\}$ has the right lifting property with respect to all sphere-to-disk inclusions. Equivalently, lift with respect to the generating q-cofibrations between globes; see \cite[Section~4]{Moore3}. For $n=0$, this asserts that $E_Q$ is nonempty.

Use normalization for the identity timed space on $\Cg$. Every $\gamma\in E_Q$ is non-constant by the cellular carrier description. Since $\gamma=N(\gamma)\lambda_\gamma$, Lemma~\ref{lem:globular-saturated} gives $N(\gamma)\in E_Q$, and closure under $\Rep$ gives
\begin{equation}
\label{eq:clock-regularization}
R(\gamma,s)=H(\gamma,s)
=N(\gamma)\circ\bigl((1-s)\lambda_\gamma+s\id_\I\bigr).
\end{equation}
Corollary~\ref{cor:invariant-family} gives continuity in $E_Q$. Theorem~\ref{thm:timed-normalization} and Lemma~\ref{lem:normalization-identities} give $R(\gamma,0)=\gamma$ and regularity for $s>0$.
\end{proof}

\begin{theorem}
\label{thm:regular-globular-lift}
For every q-cofibrant $\mathcal M$-multipointed $d$-space $A$, there exists a morphism
\[
 f:A\longrightarrow Q
\]
such that $\Sp(f):\Sp(A)\to\Cg$ is regular. The object $Q$ can be any fixed cellular q-cofibrant replacement of $\mathbf1$.
\end{theorem}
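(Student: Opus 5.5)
First reduce to the cellular case, then build $f$ by transfinite induction along a globular presentation. If $A$ is a retract of a cellular $B$ via $A\xrightarrow{i}B\xrightarrow{r}A$ and $g:B\to Q$ is a morphism with $\Sp(g)$ regular, put $f=gi$. Then $\Sp(f)=\Sp(g)\Sp(i)$ is regular, because $\Sp(i)$ is injective and hence regular, and a composite of regular maps is regular. So assume $A$ is cellular. By the discussion preceding Lemma~\ref{lem:change-reparametrizations}, we may take a presentation that starts from the discrete set of states $S$ and proceeds only by globular attachments along $\Glob_{\mathcal M}(\mathsf S^{n-1})\to\Glob_{\mathcal M}(\mathsf D^n)$. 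Send every state to the unique state $*$ of $Q$. At limit stages, take the induced map from the colimit.

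At a successor stage, we are given an attachment map $g:\Glob_{\mathcal M}(\mathsf S^{n-1})\to A_\nu$ and $f_\nu:A_\nu\to Q$. The composite $f_\nu g$ is a morphism out of a globe, which amounts to a continuous map $c:\mathsf S^{n-1}\to E_Q$, $z\mapsto f_\nu g[z,-]$. To extend it, I will find $C:\mathsf D^n\to E_Q$ with $C|_{\mathsf S^{n-1}}=c$ and $C(z)$ a non-constant regular path for every $z$ in the open disk. Lemma~\ref{lem:clock-regularization} supplies this. First extend $c$ to some $C_0:\mathsf D^n\to E_Q$. Then use a collar: for $\|z\|\leq1/2$ put $C(z)=R(C_0(2z),1)$, and for $1/2\leq\|z\|\leq1$ put $C(z)=R\bigl(c(z/\|z\|),2-2\|z\|\bigr)$. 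The two formulas agree on $\|z\|=1/2$, and at $\|z\|=1$ the value is $R(c(z),0)=c(z)$. Every interior value has $s>0$ and is therefore regular. By the adjunction, $C$ defines a morphism $\Glob_{\mathcal M}(\mathsf D^n)\to Q$ compatible with $f_\nu g$, and the pushout gives $f_{\nu+1}$. For $n=0$ the meridian is a single regular execution path in $E_Q$, for instance $R(\gamma,1)$.

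It remains to check regularity of $\Sp(f)$. By Lemma~\ref{lem:regular-generators}, it suffices to test a generating family of directed paths. I will use the cellular carrier description of \cite[Theorems~4.7 and~4.9]{GlobularNaturalSystem}: every non-constant directed path of $\Sp(A)$ is $\eta\theta$, where $\eta$ is a regular Moore execution path. Such an $\eta$ is a finite concatenation of pieces that each run along an interior meridian of some attached globe, possibly reparametrized. On an interior meridian $t\mapsto[z,t]$, the image under $f$ is $C(z)$, reparametrized. Since $C(z)$ is regular, $f\circ\eta$ is constant on a subinterval only where $\eta$ is constant there. By regularity of $\eta$ this happens only on degenerate subintervals, and then $x$ is constant on the corresponding subinterval of $\theta$.

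The main obstacle is this last step. One must know that every piece of a regular Moore execution path really is a reparametrized interior meridian of a single cell, with boundary meridians excluded, and must handle the $\mathcal M$-reparametrizations in the globe. If pieces could run along boundary meridians, the argument would proceed by induction on the stage instead: a boundary meridian's image is an earlier attachment's meridian, which is regular by the induction hypothesis. So I would prove by induction on $\nu$ that $\Sp(f_\nu)$ is regular. Limit stages are handled because any directed path lies in a finite stage, by compactness of the image in a cellular space.
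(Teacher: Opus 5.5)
Your proposal is correct and is essentially the paper's proof: reduction to the cellular case through a retraction with $\Sp(i)$ injective, transfinite construction along a purely globular presentation, and regularization of the extension by $R$ with a parameter vanishing exactly on $\mathsf S^{n-1}$ (the paper simply takes $R(F(z),1-\|z\|)$, so your collar is valid but unnecessary). Your final plan, the stage-wise induction rather than the carrier decomposition, is also the paper's route. To make it precise, state that the directed structure of $\Sp(A_{\nu+1})$ is generated by the old directed paths and the new meridians, and at a limit stage by the paths of earlier stages, because $\Sp$ preserves colimits. Lemma~\ref{lem:regular-generators} then applies, with two corrections to your wording: a boundary meridian is handled by regularity of the map $\Sp(f_\nu)$ on the old execution path it becomes, not by that path being an earlier meridian or a regular path; and no compactness argument is needed at limit stages.
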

\begin{proof}
First let $A$ be cellular. Construct compatible maps $f_\nu:A_\nu\to Q$ along a cellular decomposition, requiring $\Sp(f_\nu)$ to be regular. Initially, send every state to $*$. The directed space of a discrete state space has only constant paths, so this initial map is regular.

Suppose a globe with parameter disk $\mathsf{D}^n$ is attached to $A_\nu$. The attaching map followed by $f_\nu$ determines a continuous map $b:\mathsf{S}^{n-1}\to E_Q$. By Lemma~\ref{lem:clock-regularization}, choose an extension $F:\mathsf{D}^n\to E_Q$. Replace it by
\begin{equation}
\label{eq:regular-globe-extension}
 \widetilde F(z)=R\bigl(F(z),1-\|z\|\bigr).
\end{equation}
For $\|z\|=1$, we have $\widetilde F(z)=R(F(z),0)=F(z)=b(z)$. For $\|z\|<1$, we have $1-\|z\|>0$, so Lemma~\ref{lem:clock-regularization} makes $\widetilde F(z)$ a non-constant regular execution path. For $n=0$, use the value $1$ for the second argument of $R$. The adjoint evaluation $[z,t]\mapsto\widetilde F(z)(t)$ defines a morphism of multipointed $d$-spaces in the commutative diagram
\[
\begin{tikzcd}[column sep=large]
 \Glob_{\mathcal M}(\mathsf{S}^{n-1})
 \arrow[r]\arrow[d,hook]
 & A_\nu\arrow[d]\arrow[ddr,bend left=20,"f_\nu"]&\\
 \Glob_{\mathcal M}(\mathsf{D}^n)
 \arrow[r]\arrow[drr,bend right=15,"\widetilde f"']&A_{\nu+1}\arrow[ul,phantom,"\ulcorner",very near start]\arrow[dr,dashed,"f_{\nu+1}"]&\\
 &&Q.
\end{tikzcd}
\]
The pushout gives $f_{\nu+1}:A_{\nu+1}\to Q$ extending $f_\nu$.

The directed structure after the attachment is generated by the old directed paths and the new meridians; this follows from the description of $\Sp$ and its preservation of colimits \cite[Proposition~3.6 and Theorem~3.8]{GlobularNaturalSystem}. On an interior meridian, the image $\widetilde F(z)$ is regular, so constancy on a subinterval forces that subinterval to be degenerate. Boundary meridians are old paths. Lemma~\ref{lem:regular-generators} and the inductive hypothesis therefore prove regularity of $\Sp(f_{\nu+1})$.

At a limit stage, take the induced map on the colimit. Its directed structure is generated by paths from preceding stages, so Lemma~\ref{lem:regular-generators} again proves regularity. This completes the cellular construction.

For q-cofibrant $A$, choose a retraction $A\xrightarrow{i}B\xrightarrow{r}A$ with $B$ cellular. The underlying map $i$ is injective, hence $\Sp(i)$ is regular. Composing $i$ with the map $B\to Q$ just constructed gives the required $f$.
\end{proof}

\begin{corollary}
\label{cor:globular-timed}
Let $\mathcal P\in\{\mathcal G,\mathcal M\}$. For every q-cofibrant $\mathcal P$-multipointed $d$-space $A$, the directed space $\Sp(A)$ admits a timed-space structure over the fixed globular clock $\Cg$. Consequently, for all $u,v\in|A|$, including points that are not states, the quotient
\[
 \dP(\Sp(A))(u,v)\longrightarrow\dT(\Sp(A))(u,v)
\]
is a trivial Hurewicz fibration.
\end{corollary}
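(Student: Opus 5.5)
The plan is to reduce to the case $\mathcal P=\mathcal M$, build the clock map with Theorem~\ref{thm:regular-globular-lift}, and then invoke the general results on timed spaces.

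First, the reduction. For $\mathcal P=\mathcal G$, replace $A$ by $A^{\mathcal M}$. By Lemma~\ref{lem:change-reparametrizations} this object is again q-cofibrant. It has the same underlying space, and by the convention $\Sp(A)=\Sp(A^{\mathcal M})$ it has the same directed space. So it suffices to treat a q-cofibrant $\mathcal M$-multipointed $d$-space $A$.

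Next, the timed-space structure. Theorem~\ref{thm:regular-globular-lift} gives a morphism $f:A\to Q$ such that $p=\Sp(f):\Sp(A)\to\Cg$ is a regular directed map. Saturation of $\Sp(A)$ comes from Lemma~\ref{lem:globular-saturated}, or directly from Theorem~\ref{thm:cofibrant-saturated-clock}. The target $\Cg$ is a clock by Corollary~\ref{cor:globular-clock}. Hence $p:\Sp(A)\to\Cg$ is an object of $\Timed(\Cg)$.

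Finally, the quotient statement. Apply Theorem~\ref{thm:timed-traces} to the object $p:\Sp(A)\to\Cg$. That theorem is stated for arbitrary $u,v\in|X|$, and this is why points that are not states are covered. Its normalization uses only the progress function $\ell_x=L_{\iota p x}$ and the saturation of $X$; it never uses the states or the execution paths of $A$. So $q_{u,v}$ is a trivial Hurewicz fibration for all $u,v\in|A|$.

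I expect no serious obstacle, since all the substantive work is in Theorem~\ref{thm:regular-globular-lift}. The only point needing care is that the reduction is compatible with the identification $\Sp(A)=\Sp(A^{\mathcal M})$. This holds because that identification is the equality of directed structures noted after Lemma~\ref{lem:change-reparametrizations}.
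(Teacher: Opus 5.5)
Your proposal is correct and follows essentially the same route as the paper. You pass from $\mathcal G$ to $\mathcal M$ via $A^{\mathcal M}$ and Lemma~\ref{lem:change-reparametrizations}, take the regular map from Theorem~\ref{thm:regular-globular-lift}, obtain saturation from Theorem~\ref{thm:cofibrant-saturated-clock}, and conclude with Theorem~\ref{thm:timed-traces}, which covers arbitrary $u,v\in|A|$.
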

\begin{proof}
For $\mathcal P=\mathcal M$, use Theorem~\ref{thm:regular-globular-lift}. For $\mathcal P=\mathcal G$, apply that theorem to $A^{\mathcal M}$, which is q-cofibrant by Lemma~\ref{lem:change-reparametrizations}. Saturation follows from Theorem~\ref{thm:cofibrant-saturated-clock}, and the assertion about the quotient follows from Theorem~\ref{thm:timed-traces}.
\end{proof}

\begin{remark}
Every q-cofibrant realization is a saturated clock, but the maps of Theorem~\ref{thm:regular-globular-lift} place all of them in the single category $\Timed(\Cg)$. They depend on choices of extensions; formula~\eqref{eq:regular-globe-extension} ensures that those extensions are regular on interior meridians.
\end{remark}

\subsection{Comparing the cubical and globular clocks}

A finite cellular example distinguishes the globular and cubical clocks.

\begin{proposition}
\label{prop:no-cubical-clock}
There exists a finite cellular $\mathcal M$-multipointed $d$-space $A$ such that $\Sp(A)$ admits no regular directed map to the cubical clock $\dS$.
\end{proposition}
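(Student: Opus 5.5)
We will build $A$ explicitly, following the sketch in the introduction. Start with one state $\alpha$ and attach a single globe $\Glob_{\mathcal M}(\mathsf D^0)$ from $\alpha$ to $\alpha$. Its meridian is an execution loop $a$ with $a(0)=a(1)=\alpha$. The concatenation $a*_Na$ is again an execution loop. The map $\mathsf S^0\to\mathbb P^{\mathrm{top}}_{\alpha,\alpha}$ sending the two points to $a$ and $a*_Na$ defines an attaching map $\Glob_{\mathcal M}(\mathsf S^0)\to A_1$. Attach $\Glob_{\mathcal M}(\mathsf D^1)$ along it. The result is a finite cellular object $A$, whose underlying space is a $2$-cell glued onto a circle. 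The interior meridians $t\mapsto[z,t]$ for $z\in\mathsf D^1$ are execution paths. Hence $H(z,t)=[z,t]$ is a homotopy through directed loops at $\alpha$ from $a$ to $a*_Na$, with fixed endpoints.

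Now suppose $p:\Sp(A)\to\dS$ is a regular directed map. Each directed path $x$ of $\Sp(A)$ has $p\circ x$ admitting a nondecreasing real lift $\widetilde{px}$. For a loop at $\alpha$, the increment $w(x)=\widetilde{px}(1)-\widetilde{px}(0)$ is a nonnegative integer, independent of the chosen lift. Put $n=w(a)$. Since $a$ is non-constant, Lemma~\ref{lem:regular-stops} gives that $p\circ a$ is non-constant. A nondecreasing lift that is not constant has positive increment, so $n\geq1$.

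Winding number is additive under concatenation, so $w(a*_Na)=2n$. On the other hand, $z\mapsto p\circ H(z,-)$ is a continuous family of loops in $\mathbb R/\mathbb Z$ based at $p(\alpha)$. The winding number $\widetilde{p H(z,-)}(1)-\widetilde{pH(z,-)}(0)$ is the degree of a based loop, which is locally constant in the uniform topology. It is therefore constant on the connected disk $\mathsf D^1$. Evaluating at the endpoints gives $n=2n$, contradicting $n\geq1$.

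The main point to check is continuity of $z\mapsto H(z,-)$ into $\Cont_\co(\I,|A|)$. It follows from the exponential law, because $(z,t)\mapsto[z,t]$ is the continuous quotient map $\mathsf D^1\times\I\to|A|$. The degree argument uses only standard covering theory of $\mathbb R\to\mathbb R/\mathbb Z$: lifts of a continuous family of paths depend continuously on the parameter. No globular-clock machinery is needed, only regularity applied to the single loop $a$.
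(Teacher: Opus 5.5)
Your proof is correct and is essentially the paper's argument: the same two-globe object with boundary meridians $a$ and $a*_N a$, regularity forcing $p\circ a$ to have positive winding number $n$, and homotopy invariance plus additivity of winding number giving $n=2n$. You assert without comment that $a$ is still non-constant in $A$, which the paper justifies explicitly: the second attachment is a disk attachment along the boundary, so it embeds the circle formed by the first globe.
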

\begin{proof}
Start with a single state $v$, and attach $\Glob_{\mathcal M}(\mathsf{D}^0)$ with both endpoints identified with $v$. Denote the resulting multipointed $d$-space by $B$ and its characteristic execution loop by
\[
 a:\I\longrightarrow |B|.
\]
Thus $|B|$ is a circle. Attach one further globe by the pushout
\[
\begin{tikzcd}[column sep=large]
 \Glob_{\mathcal M}(\mathsf{S}^0)
 \arrow[r,"{(a,\,a*_N a)}"]\arrow[d,hook]
 &B\arrow[d]\\
 \Glob_{\mathcal M}(\mathsf{D}^1)
 \arrow[r]&A\arrow[ul,phantom,"\ulcorner",very near start].
\end{tikzcd}
\]
The upper map sends the two meridians to $a$ and $a*_N a$. Both are execution paths with endpoints $v$, so this is a valid globular attachment. In particular, $A$ is cellular and q-cofibrant.

The characteristic map of the last globe, pictured in Figure~\ref{fig:cubical-obstruction}, gives a homotopy, relative to the endpoints,
\begin{equation}
\label{eq:loop-homotopic-double}
 a\simeq a*_N a
\end{equation}
in $|A|$. The path $a$ is still non-constant: the underlying globular attachment is a disk attachment along its boundary and embeds the preceding space.

Suppose that $p:\Sp(A)\to\dS$ were a regular directed map. Then $c=p a$ would be a non-constant directed loop. A nondecreasing lift $\widetilde c:\I\to\mathbb R$ has
\[
 n=\widetilde c(1)-\widetilde c(0)\in\mathbb Z_{>0}.
\]
Indeed, the difference is an integer because $c$ is a loop; it is positive because a nondecreasing function with equal endpoints is constant. Composing \eqref{eq:loop-homotopic-double} with $p$ gives $c\simeq c*_N c$. Homotopy invariance and additivity of winding number therefore give
\[
n=\operatorname{wind}(c)=\operatorname{wind}(c*_N c)
=\operatorname{wind}(c)+\operatorname{wind}(c)=2n.
\]
Hence $n=0$, contradicting $n>0$.
\end{proof}

\begin{figure}[htbp]
\centering
\begin{tikzpicture}[x=1cm,y=1cm,>=stealth,font=\small]
  % This is the characteristic globe before the attaching identifications.
  % The labels record images in A, not distinct vertices of A.
  \fill[black!4]
    (0,0) .. controls (-1.65,.55) and (-1.65,3.05) .. (0,3.6)
    .. controls (1.65,3.05) and (1.65,.55) .. (0,0);
  \foreach \r in {-.68,0,.68} {
    \draw[dashed,black!45]
      (0,0) .. controls (\r,.65) and (\r,2.95) .. (0,3.6);
  }
  \draw[thick,->]
    (0,0) .. controls (-1.65,.55) and (-1.65,3.05) .. (0,3.6);
  % These are the two halves of the same right-hand boundary meridian.
  \draw[thick,->]
    (0,0) .. controls (.825,.275) and (1.2375,1.0375) .. (1.2375,1.8);
  \draw[thick,->]
    (1.2375,1.8) .. controls (1.2375,2.5625) and (.825,3.325) .. (0,3.6);
  \fill (0,0) circle (1.7pt);
  \fill (0,3.6) circle (1.7pt);
  \fill (1.2375,1.8) circle (1.7pt);
  \node[below] at (0,-.04) {$0\mapsto v$};
  \node[above] at (0,3.64) {$1\mapsto v$};
  \node[left] at (-1.3,1.8) {$a$};
  \node[right] at (1.18,.85) {$a$};
  \node[right] at (1.18,2.75) {$a$};
  \node[right] at (1.28,1.8) {$\mapsto v$};
  \node[align=left,anchor=west,text width=6.1cm] at (3.1,1.8) {The two boundary meridians map to $a$ and \mbox{$a*_N a$}. A regular map $p:\Sp(A)\to\dS$ would give
    \[
      \operatorname{wind}(p a)=n>0,
      \qquad n=2n.
    \]
  };
\end{tikzpicture}
\caption{The characteristic globe of the finite obstruction, shown before the boundary identifications. The marked points all map to the state $v$; the right boundary traverses $a$ twice. Interior meridians give the homotopy $a\simeq a*_N a$.}
\label{fig:cubical-obstruction}
\end{figure}

The globular clock nevertheless contains the cubical clock.

\begin{proposition}
\label{prop:common-globular-clock}
There is an injective morphism of directed spaces
\[
 j:\dS\longrightarrow\Cg.
\]
It is regular, and composition with $j$ defines a full and faithful functor
\[
 j_*:\Timed(\dS)\longrightarrow\Timed(\Cg).
\]
Consequently every standard directed realization of a precubical set admits a timed-space structure over the fixed globular clock.
\end{proposition}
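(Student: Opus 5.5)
The plan is to realize $\dS$ as $\Sp$ of the one-loop cellular object and map it into $Q$ via Theorem~\ref{thm:regular-globular-lift}. Let $B$ be the $\mathcal M$-multipointed $d$-space from the proof of Proposition~\ref{prop:no-cubical-clock}: one state $v$ and one globe $\Glob_{\mathcal M}(\mathsf D^0)$ with both endpoints glued to $v$. It is cellular, $|B|$ is a circle, and the directed structure of $\Sp(B)$ is generated by the characteristic loop $a$ under reparametrization, restriction, constants and concatenation.

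First I identify $\Sp(B)$ with $\dS$. Take the homeomorphism $|B|\to\mathbb R/\mathbb Z$ sending $a(t)\mapsto[t]$. Directed paths of $\Sp(B)$ are finite concatenations of nondecreasing reparametrizations of restrictions of $a$. These admit nondecreasing real lifts, so they are directed in $\dS$.

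For the converse, let $x$ be directed in $\dS$ with nondecreasing lift $\widetilde x$. Subdivide $\I$ at the finitely many times where $\widetilde x$ crosses an integer; compactness and monotonicity make the set of such values finite. On each piece, $x$ is a nondecreasing reparametrization of a restriction of $a$, so it is directed in $\Sp(B)$. Hence $\Sp(B)\cong\dS$ as directed spaces.

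Next I apply Theorem~\ref{thm:regular-globular-lift} to the cellular object $B$. This gives a morphism $f:B\to Q$ with $\Sp(f):\dS\to\Cg$ regular, and I set $j=\Sp(f)$. Injectivity is the step I expect to need care, since the theorem only gives regularity. I would redo the construction in that proof for this specific $B$. The state goes to $*$. For the single $0$-globe, $\widetilde F$ is an execution loop $\gamma=R(F,1)$ in $E_Q$, which is a non-constant regular path.

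Regularity alone does not make $\gamma$ injective on $]0,1[$, so I would choose $\gamma$ more carefully. Since $Q\to\mathbf1$ is a trivial fibration, $E_Q$ is nonempty. I would take $\gamma$ to be a characteristic meridian of a globe of $Q$ with parameter $\mathsf D^0$, whose interior is injective with image disjoint from $*$. The first such cell attached in a cellular replacement starting from $\{*\}$ has exactly this property, because the lifting for $n=0$ forces attachment of a $\Glob(\mathsf D^0)$ cell. A meridian of an attached cell is injective on its interior and regular. The regularity check for $j$ then follows by Lemma~\ref{lem:regular-generators}. If one prefers not to rely on the shape of the cellular decomposition, an alternative is to pick any cell of $Q$ whose characteristic globe meets the interior and use an interior meridian of it. Interior meridians of a globe are injective into $|Q|$ away from the endpoints, and they begin and end at $*$.

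Finally I check the properties of $j_*$. By Theorem~\ref{thm:timed-locally-presentable}, and since compositions of regular maps are regular, $j_*(X,p)=(X,jp)$ is a timed space over $\Cg$. Faithfulness is clear, because $j_*$ does not change the underlying morphisms. For fullness, let $h$ be a morphism with $jqh=jp$. Injectivity of $j$ gives $qh=p$, so $h$ is a morphism over $\dS$. The final assertion follows by applying $j_*$ to the timed spaces $(|K|,p_K)$ of Proposition~\ref{prop:cubical-timing}.
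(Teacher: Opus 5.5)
Your proposal is correct and, once you drop the detour through Theorem~\ref{thm:regular-globular-lift} (you end up choosing the loop directly as a meridian, and injectivity then gives regularity for free), it is essentially the paper's proof. The paper also takes the first globe of a decomposition of $Q$ starting from $*$, which must have parameter $\mathsf D^0$ because for $n\geq1$ the nonempty sphere $\mathsf S^{n-1}$ gives execution paths that cannot land in a discrete state space; it identifies that stage with $\dS$ exactly as you do, and gets injectivity because later attachments preserve it as a subspace. Your alternative using an interior meridian of an arbitrary globe of $Q$ is also valid: it needs only $E_Q\neq\varnothing$ and spares the argument about which cell comes first.
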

\begin{proof}
Choose a cellular decomposition of $Q$ starting from its unique state $*$. Its execution-path space is nonempty by Lemma~\ref{lem:clock-regularization}, so this presentation has a first globular attachment. That first globe must have parameter disk $\mathsf{D}^0$: for $n\geq1$, the attaching globe with parameter sphere $\mathsf{S}^{n-1}$ has execution paths and therefore cannot map into the initial discrete state space, which has none.

Both endpoints of the first globe attach to $*$. The resulting cellular stage $B$ is the multipointed directed loop, and
\[
 \Sp(B)\cong\dS.
\]
To check the directed structures in this identification, the characteristic meridian is $t\mapsto t\pmod{\mathbb Z}$ and has a nondecreasing lift. Nondecreasing restrictions and concatenation preserve this property. Conversely, a nondecreasing lift on the compact interval $\I$ crosses only finitely many integer levels. Splitting it at those levels expresses its projection as a finite concatenation of nondecreasing restrictions of the characteristic meridian of $B$.

All subsequent globular attachments preserve the preceding underlying space as a subspace. Hence $B\to Q$ induces an injective morphism of directed spaces $j:\dS\to\Cg$. An injective map reflects constancy, so $j$ is regular. Composites of regular directed maps are regular, which defines $j_*$.

For two timed spaces over $\dS$, $p:X\to\dS$ and $q:Y\to\dS$, a directed map $h:X\to Y$ satisfies
\[
 (j q)h=j p\quad\Longleftrightarrow\quad qh=p
\]
by injectivity of $j$. Thus $j_*$ is full and faithful. Finally, Proposition~\ref{prop:cubical-timing} supplies the regular map $p_K:|K|\to\dS$ for every precubical set $K$, and $j p_K$ is the required regular map to $\Cg$.
\end{proof}

\begin{corollary}
\label{cor:no-globular-to-cubical-clock}
There is no regular directed map $\Cg\to\dS$.
\end{corollary}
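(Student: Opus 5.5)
The plan is to argue by contradiction, composing with the regular maps built earlier so as to land in the obstruction of Proposition~\ref{prop:no-cubical-clock}. Suppose $g:\Cg\to\dS$ were a regular directed map.

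First, let $A$ be the finite cellular $\mathcal M$-multipointed $d$-space of Proposition~\ref{prop:no-cubical-clock}. It is q-cofibrant, so Theorem~\ref{thm:regular-globular-lift} provides a morphism $f:A\to Q$ such that $\Sp(f):\Sp(A)\to\Cg$ is regular.

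Next, the composite $g\circ\Sp(f):\Sp(A)\to\dS$ is a directed map. It is also regular: if $x$ is a directed path of $\Sp(A)$ and $g\Sp(f)x$ is constant, then $\Sp(f)x$ is a directed path of $\Cg$ with constant image under $g$. Regularity of $g$ makes $\Sp(f)x$ constant, and regularity of $\Sp(f)$ then makes $x$ constant. This contradicts Proposition~\ref{prop:no-cubical-clock}, so no such $g$ exists.

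There is no real obstacle; the only point to check is that composites of regular directed maps are regular, which is the two-line argument above (also used in the proof of Proposition~\ref{prop:common-globular-clock}). As an alternative, one could work directly inside $Q$. Lemma~\ref{lem:clock-regularization} lets one realize the homotopy $a\simeq a*_N a$ for a regular execution loop in $\Cg$, and then the winding-number argument applies verbatim. Going through Theorem~\ref{thm:regular-globular-lift} avoids redoing that argument.
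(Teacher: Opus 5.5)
Your proof is correct and is essentially the paper's own argument. You take the finite cellular object $A$ of Proposition~\ref{prop:no-cubical-clock}, use Theorem~\ref{thm:regular-globular-lift} to obtain a regular map $\Sp(A)\to\Cg$, and compose it with the hypothetical regular map $\Cg\to\dS$ to contradict that proposition. Your explicit check that a composite of regular maps is regular is the one detail the paper leaves implicit.
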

\begin{proof}
Let $A$ be the cellular object of Proposition~\ref{prop:no-cubical-clock}. By Theorem~\ref{thm:regular-globular-lift}, there is a regular directed map $\Sp(A)\to\Cg$. A regular map $\Cg\to\dS$ would yield a regular map $\Sp(A)\to\dS$ by composition, contradicting that proposition.
\end{proof}

\begin{figure}[htbp]
\centering
\begin{tikzcd}[row sep=3.4em,column sep=6em]
 {|K|}\arrow[r,"p_K"]\arrow[dr,"j p_K"']
   &\dS\arrow[d,hook,"j"]\\
 \Sp(A)\arrow[r,"p_A"']&\Cg
\end{tikzcd}
\qquad
\begin{tikzcd}[column sep=3.5em]
 \Timed(\dS)
   \arrow[r,hook,"j_*","\text{full and faithful}"']
   &\Timed(\Cg)
\end{tikzcd}
\caption{A common clock for cubical and globular directed topology. Here $K$ is any precubical set and $A$ is any q-cofibrant multipointed $d$-space. Every arrow in the left-hand diagram is regular, and $j$ is injective. The choice of $j$ defines a full and faithful functor from $\Timed(\dS)$ to $\Timed(\Cg)$.}
\label{fig:common-clock}
\end{figure}

\Needspace{6\baselineskip}
\subsection{A new proof of the quotient theorem for execution paths}

The h-model structure has homotopy equivalences and Hurewicz fibrations, and all spaces are h-cofibrant. The mixed m-model structure has weak homotopy equivalences and Hurewicz fibrations. We use these structures in the chosen convenient category, as in \cite[Section~2]{Moore3} and \cite[Appendix~B]{leftproperflow}. Cole's criterion \cite[Corollary~3.7, pp.~1021--1022]{mixed-cole}, with his structures $1$ and $2$ equal to the h- and q-structures, says that a space is m-cofibrant if and only if it is homotopy equivalent to a q-cofibrant space. We will use the criterion for execution-path spaces.

For $\mathcal P\in\{\mathcal G,\mathcal M\}$, write
\[
 E_{\alpha,\beta}(A)=\mathbb P^{\mathrm{top}}_{\alpha,\beta}A,
 \qquad
 T_{\alpha,\beta}(A)=\mathbb P_{\alpha,\beta}A.
\]
The second space is the quotient of the first by the equivalence relation generated by $x\sim x\phi$, $\phi\in\mathcal P(1,1)$; it is the corresponding path space of the categorization flow $\mathrm{cat}_{\mathcal P}(A)$.  These spaces involve execution paths, so no constant path is added when $\alpha=\beta$.

The next theorem gives a new proof of \cite[Theorem~16]{Moore3}. The trivial Hurewicz fibration assertion is recalled there for $\mathcal P=\mathcal G$; here it also holds for $\mathcal P=\mathcal M$, where the existence of a section for arbitrary cellular objects was left open. The proof restricts normalization to execution paths and uses the independent cofibrancy results for categorization and path spaces of flows.

\begin{theorem}
\label{thm:moore16}
Let $\mathcal P\in\{\mathcal G,\mathcal M\}$, let $A$ be a q-cofibrant $\mathcal P$-multipointed $d$-space, and let $\alpha,\beta\in A^0$.  Then
\[
 E_{\alpha,\beta}(A)\longrightarrow T_{\alpha,\beta}(A)
\]
is a trivial Hurewicz fibration from an m-cofibrant space to a q-cofibrant space. Moreover, this quotient admits a continuous section and a trace-preserving strong deformation onto the image of that section.
\end{theorem}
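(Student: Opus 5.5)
The plan is to apply Corollary~\ref{cor:invariant-family} to the family $\mathcal A=E_{\alpha,\beta}(A)$ inside the underlying space $Z=|A|$, which is submetrizable by Theorem~\ref{thm:cofibrant-saturated-clock}. Alternatively, one can use the timed structure over $\Cg$ from Corollary~\ref{cor:globular-timed}, or the identity timed structure on $\Sp(A)$. Either choice gives a continuous progress function, hence $\lambda_x$, $N(x)$ and the homotopy $H(x,s)=N(x)\circ((1-s)\lambda_x+s\id_\I)$.

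The main hypothesis to check is that $N$ and $H$ take values in $E_{\alpha,\beta}(A)$. First reduce to $\mathcal P=\mathcal M$ via Lemma~\ref{lem:change-reparametrizations}; for $\mathcal G$, we will come back to it below. Execution paths are never constant when $A$ is q-cofibrant: this holds by the cellular carrier description for cellular $A$, and passes to retracts because the retraction maps execution paths to execution paths, which lie in cellular execution-path spaces after applying $i$. For an execution path $x$, the factorization $x=N(x)\lambda_x$ with $\lambda_x\in\Rep$ and the saturation of execution paths (Lemma~\ref{lem:globular-saturated}) show that $N(x)$ is an execution path. Then $H(x,s)$ is $N(x)$ reparametrized by an element of $\Rep$, so it is an execution path as well. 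Also, $E_{\alpha,\beta}(A)$ is stable under $\Rep$.

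For $\mathcal P=\mathcal G$, use the second clause of Corollary~\ref{cor:invariant-family}: the execution paths are non-constant. We need them to be regular, but a general $\mathcal G$-execution path of a cellular object is a regular path, since cellular $\mathcal G$-objects have only regular execution paths up to homeomorphism. If this is not directly available, argue instead with the $\mathcal G$-classes on the regular family. Here $\lambda_x$ is a homeomorphism by stop detection, so $N(x)=x\lambda_x^{-1}$ is again a $\mathcal G$-execution path. This yields the trivial Hurewicz fibration, the continuous section, and a strong deformation preserving classes in both cases. The quotient topology agrees with $T_{\alpha,\beta}(A)$ because both are quotients of $k_\Delta$ of the same relative compact-open topology.

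For the cofibrancy assertions, I would invoke the independent results cited in the paper. The space $T_{\alpha,\beta}(A)$ is the path space of the categorization flow $\mathrm{cat}_{\mathcal P}(A)$, which is q-cofibrant as a flow when $A$ is q-cofibrant, and path spaces of q-cofibrant flows are q-cofibrant spaces. Then $E_{\alpha,\beta}(A)$ is homotopy equivalent to the q-cofibrant space $T_{\alpha,\beta}(A)$, through the section just constructed. Cole's criterion therefore makes $E_{\alpha,\beta}(A)$ m-cofibrant. I expect the main obstacle to be the $\mathcal G$ case: one must ensure that all $\mathcal G$-execution paths are regular, equivalently that the relevant family is stable under the normalization, since constant subintervals are a priori allowed only in the $\mathcal M$ setting.
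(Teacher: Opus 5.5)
Your proof is correct and has the same architecture as the paper's. Both apply Corollary~\ref{cor:invariant-family} to $E_{\alpha,\beta}(A)$ and use saturation of execution paths (Lemma~\ref{lem:globular-saturated}) to keep $N(x)$ and $H(x,s)$ in the family for $\mathcal M$. Both use regularity of execution paths to make $\lambda_x$ a homeomorphism for $\mathcal G$, and both get the cofibrancy claims from q-cofibrancy of $\mathrm{cat}_{\mathcal P}(A)$ and Cole's criterion.

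The one real difference is the source of the progress function. The paper takes a regular map $p:\Sp(A)\to\Cg$ from Corollary~\ref{cor:globular-timed}, so it relies on the extension-and-regularization construction of Theorem~\ref{thm:regular-globular-lift}. Your primary choice is a continuous injection $\iota$ of $|A|$ into a metric space, or equivalently the identity clock of Theorem~\ref{thm:cofibrant-saturated-clock}. This needs only Lemmas~\ref{lem:cellular-submetrizable} and~\ref{lem:globular-saturated}, because injectivity of $\iota$ already gives stop detection for $L_{\iota\circ x}$. Your route is more economical and fully sufficient for the statement. The paper's choice normalizes all q-cofibrant realizations over one fixed clock with fixed metric data. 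By Proposition~\ref{prop:natural-normalization}, the sections and deformations then commute with every morphism in $\Timed(\Cg)$. That fits the section's theme but is not needed here.

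Two points should be tightened.

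First, in the $\mathcal G$ case your reason for regularity (\emph{only regular execution paths up to homeomorphism}) is circular. Your fallback of working on the regular subfamily would not prove the statement, which concerns all of $E_{\alpha,\beta}(A)$. The fact you need comes by citation: execution paths of a cellular $\mathcal G$-object are non-constant and regular by \cite[Proposition~5.13]{Moore2}. It passes to a retract $A\xrightarrow{i}B\xrightarrow{r}A$ because $i\circ x$ is constant on every interval on which $x$ is.

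Second, Lemma~\ref{lem:change-reparametrizations} does not reduce the $\mathcal G$ case to the $\mathcal M$ case, since $E_{\alpha,\beta}$ and $T_{\alpha,\beta}$ change under $A\mapsto A^{\mathcal M}$. Its only role, through $A^{\mathcal M}$, is to give submetrizability of $|A|$.
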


\begin{proof}
By Corollary~\ref{cor:globular-timed}, choose a regular map
\[
 p:\Sp(A)\longrightarrow\Cg.
\]
Fix metric data on $\Cg$ and use $N_p$, $\lambda_x$ and $H_p$ for this timed space. We check the hypotheses of Corollary~\ref{cor:invariant-family} for $E_{\alpha,\beta}(A)$.

For $\mathcal P=\mathcal M$, the factorization $x=N_p(x)\lambda_x$ and Lemma~\ref{lem:globular-saturated} show that $N_p(x)$ is an execution path. Closure under $\Rep$ then gives the same assertion for $H_p(x,s)$.

For $\mathcal P=\mathcal G$, execution paths in a cellular object are non-constant and regular \cite[Proposition~5.13]{Moore2}; this remains true in a retract, since its inclusion preserves any stop interval. Equation~\eqref{eq:timed-stops} makes $\lambda_x$ strictly increasing. Hence $N_p(x)=x\lambda_x^{-1}$ and $H_p(x,s)=N_p(x)((1-s)\lambda_x+s\id_\I)$ lie in the $\mathcal G$-orbit of $x$.

Corollary~\ref{cor:invariant-family} now gives the trivial Hurewicz fibration, the section and the strong deformation, with the specified execution-path and quotient topologies.

It remains to check the two cofibrancy assertions. By \cite[Theorem~15]{Moore3}, the flow $\mathrm{cat}_{\mathcal P}(A)$ is q-cofibrant. Its total path space is q-cofibrant by \cite[Theorem~5.7]{leftproperflow}. Each fixed-endpoint space $T_{\alpha,\beta}(A)$ is a coproduct summand of this total path space. If it is nonempty, a chosen point defines a retraction onto it by sending every other summand to that point. It is therefore q-cofibrant; the empty space is q-cofibrant as well. Finally, $E_{\alpha,\beta}(A)$ is homotopy equivalent to the q-cofibrant space $T_{\alpha,\beta}(A)$, so Cole's criterion gives its m-cofibrancy.
\end{proof}

\begin{remark}
The proof uses the quotient topology of the execution-path family directly. It does not require comparison with a subspace of the directed trace space. For $\mathcal G$, $\lambda_x$ is a homeomorphism; for $\mathcal M$, it may have nondegenerate fibres.
\end{remark}

\section{Saturated directed spaces without global clocks}
\label{sec:without-global-clocks}

We now apply the clock construction to compact metrizable subspaces determined by families of traces.

\begin{lemma}[Compact families of traces]
\label{lem:compact-trace-images}
Let $X$ be a directed space with Hausdorff underlying space. For every compact space $B$ and every continuous map $f:B\to\dT(X)(u,v)$, the subspace
\[
 K_f=\bigcup_{b\in B}\im(f(b))\subseteq |X|
\]
is compact. Here $\im(\tau)$ is the image of any representative of the trace $\tau$.
\end{lemma}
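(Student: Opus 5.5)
The plan is to prove quasicompactness of $K_f$ directly from open covers. The point is to avoid lifting $f$ to a family of paths: such a lift need not exist, since $f$ is only continuous into the quotient $\dT(X)(u,v)$. Hausdorffness is free, because $K_f$ is a subspace of the Hausdorff space $|X|$. So it remains only to show that every cover of $K_f$ by open subsets of $|X|$ has a finite subcover.

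The key step is an openness statement in the trace space. For an open subset $U\subseteq|X|$, put
\[
 W_U=\{\tau\in\dT(X)(u,v)\mid \im(\tau)\subseteq U\}.
\]
This set is well defined because trace-equivalent paths have the same image. I claim that $W_U$ is open. Its preimage under $q_{u,v}$ is $\dP(X)(u,v)\cap[\I,U]$, where $[\I,U]$ is the subbasic set of the compact-open topology. This preimage is open in $\dP_\co(X)(u,v)$, and hence in the finer topology of its $\Delta$-kelleyfication $\dP(X)(u,v)$. Since the preimage is saturated for the trace relation, it is exactly $q_{u,v}^{-1}(W_U)$. The definition of the quotient topology then shows that $W_U$ is open.

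With this in hand, I would take an open cover $(U_i)_{i\in I}$ of $K_f$ in $|X|$. For each $b\in B$, the image $\im(f(b))$ is the continuous image of $\I$, hence quasicompact. So there is a finite set $F_b\subseteq I$ with $\im(f(b))\subseteq U_b:=\bigcup_{i\in F_b}U_i$. Then $b\in f^{-1}(W_{U_b})$, and these sets form an open cover of $B$ by the previous paragraph and the continuity of $f$. Quasicompactness of $B$ yields finitely many points $b_1,\dots,b_k$ whose sets already cover $B$. For any $b\in B$, pick $j$ with $b\in f^{-1}(W_{U_{b_j}})$; then $\im(f(b))\subseteq U_{b_j}$. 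Hence $K_f$ is covered by the finitely many $U_i$ with $i\in F_{b_1}\cup\cdots\cup F_{b_k}$.

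I expect the only delicate point to be the openness of $W_U$. Two facts are needed there: that the $\Delta$-kelleyfication only refines the compact-open topology, and that the image of a path is an invariant of its trace. The latter was noted when traces were defined. The rest is routine.
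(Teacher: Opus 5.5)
Your proposal is correct and follows the paper's proof: you show that the set of traces with image in an open $U$ is open because its preimage under $q_{u,v}$ is $[\I,U]\cap\dP(X)(u,v)$, and then you run the finite-subcover argument on $B$. The differences are cosmetic. You index the cover of $B$ by points $b$ rather than by finite sets of indices, and you spell out that the $\Delta$-kelleyfication only refines the topology.
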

\begin{proof}
For every open subset $U\subseteq|X|$, the set
\[
 \mathcal V_U=\{\tau\in\dT(X)(u,v)\mid\im(\tau)\subseteq U\}
\]
is open: its inverse image under $q_{u,v}$ is the open set $[\I,U]\cap\dP(X)(u,v)$. Let $(U_i)_{i\in J}$ be a family of open subsets of $|X|$ covering $K_f$. Each $\im(f(b))$ is the continuous image of $\I$ in the Hausdorff space $|X|$, and is therefore compact. It is contained in $\bigcup_{i\in F}U_i$ for some finite subset $F\subseteq J$. The open sets $f^{-1}(\mathcal V_{\bigcup_{i\in F}U_i})$, indexed by these finite subsets, cover $B$. By compactness, finitely many of these open sets cover $B$, so finitely many of the $U_i$ cover $K_f$. Thus $K_f$ is quasicompact, and it is Hausdorff as a subspace of $|X|$.
\end{proof}

\begin{lemma}
\label{lem:compact-trace-subspace}
Let $X$ be a Hausdorff saturated directed space, let $K\subseteq|X|$ be compact metrizable and contain $u,v$, and equip $Y=k_\Delta K$ with the directed paths of $X$ contained in $K$. Put
\[
\begin{aligned}
P_X&=\dP_\co(X)(u,v),& P_K&=\{x\in P_X\mid x(\I)\subseteq K\},\\
Q_X&=P_X/{\sim},& Q_K&=P_K/{\sim},
\end{aligned}
\]
using the relative topology on $P_K$ and the ordinary quotient topologies on $Q_X$ and $Q_K$. Let $\iota:Q_K\to Q_X$ be the map induced by the inclusion $P_K\subseteq P_X$. Then $Y$ is a saturated clock, $Q_K$ is Hausdorff, and $\iota$ is a closed embedding. Moreover, there are natural homeomorphisms
\[
\dP(Y)(u,v)\cong k_\Delta P_K,
\qquad \dT(Y)(u,v)\cong k_\Delta Q_K.
\]
\end{lemma}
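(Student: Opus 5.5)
The plan is to establish the four assertions in order: the clock property of $Y$, then the path-space homeomorphism, then the Hausdorffness of $Q_K$ together with the closed embedding, and finally the trace-space homeomorphism.

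\emph{$Y$ is a saturated clock.} First I check that $Y$ is a directed space in $\Top$. The space $k_\Delta K$ is $\Delta$-generated, and it is $\Delta$-Hausdorff because $K$ is Hausdorff and $k_\Delta$ only refines the topology. Continuous maps $\I\to K$ and $\I\to k_\Delta K$ coincide, because $k_\Delta$ does not change maps from simplices. So the directed paths of $X$ with image in $K$ are paths of $Y$. This family contains the constant paths and is closed under reparametrization and concatenation, since images only shrink or unite. Saturation is inherited from $X$: if $x\phi$ is directed with image in $K$, then $x$ has the same image and is directed in $X$. For submetrizability, the identity $k_\Delta K\to K$ is continuous and injective into a metrizable space.

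\emph{Path spaces.} The compact-open topology on $\Cont(\I,K)$ is the uniform one, and it coincides with the subspace topology inherited from $\Cont_\co(\I,|X|)$. The reason is that $K$ carries the subspace topology and subbasic sets $[C,U\cap K]$ correspond to $[C,U]$. The topology of $\Cont_\co(\I,k_\Delta K)$ may be finer, but after $k_\Delta$ the two agree. To see this, note that a map from a simplex $\Delta^n$ into $\Cont_\co(\I,K)$ is adjoint to a map $\Delta^n\times\I\to K$, and this map is continuous into $k_\Delta K$ because $\Delta^n\times\I$ is $\Delta$-generated. This gives $\dP(Y)(u,v)\cong k_\Delta P_K$.

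\emph{Hausdorffness of $Q_K$ and the closed embedding.} Here I use a global normalization on $P_K$. By Corollary~\ref{cor:submetrizable}, or directly by Corollary~\ref{cor:invariant-family} with a metric $d$ on $K$, the normalization $N$ is continuous on $P_K$ for the ordinary compact-open topology, by Proposition~\ref{prop:normalization-continuous} and Lemma~\ref{lem:factor-continuity}. It is constant on trace classes and separates them. Indeed, by Lemma~\ref{lem:normalization-identities}, $x\sim y$ if and only if $N(x)=N(y)$, since the trace class of a path is determined by its normalized representative. So $N$ induces a continuous injection $Q_K\to P_K$, and $Q_K$ is Hausdorff because $P_K$ is metrizable.

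For $\iota$: injectivity holds because trace equivalence in $P_X$ between paths with image in $K$ is witnessed by paths with the same image, hence inside $P_K$. The image of $\iota$ is closed, because it is $\{\tau:\im\tau\subseteq K\}$, whose preimage $\{x:x(\I)\subseteq K\}$ is closed; $K$ is closed in the Hausdorff space $|X|$, and evaluation is continuous. For the embedding, I must show that the quotient topology on $Q_K$ coincides with the subspace topology from $Q_X$; this is the main obstacle. The plan is to show that $q_X^{-1}(\iota Q_K)=P_K$ and that the restriction of a quotient map to a saturated closed subset is again a quotient map. This standard fact holds because for a saturated closed $F$ and $V\subseteq F$ with saturated preimage closed in $F$, that preimage is closed in $P_X$. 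Applied to closed sets, it gives that $P_K\to\iota Q_K$ is quotient, so $\iota$ is a homeomorphism onto its image.

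\emph{Trace spaces.} The final homeomorphism $\dT(Y)(u,v)\cong k_\Delta Q_K$ follows from the quotient $k_\Delta P_K\to k_\Delta Q_K$. Here $k_\Delta$ commutes with quotients of this form: the quotient of a $\Delta$-generated space is $\Delta$-generated, and it is finer than $Q_K$. Conversely, continuity of the section $N$ on $k_\Delta P_K$ produces inverse maps, using $k_\Delta$ applied to $Q_K\to P_K\to k_\Delta P_K$ composed with the quotient.
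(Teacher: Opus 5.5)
Your proposal is correct and follows essentially the paper's proof. Saturation and submetrizability of $Y$ are inherited from $X$ and $K$, and the path spaces are identified after $k_\Delta$. The normalization built from a metric on $K$ gives a continuous section $Q_K\to P_K$, which yields both the Hausdorffness of $Q_K$ and the homeomorphism $\dT(Y)(u,v)\cong k_\Delta Q_K$. The closed embedding comes from $P_K=\pi_X^{-1}(\iota(Q_K))$ being closed and saturated.

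The differences are only presentational. You prove directly that a quotient map restricts to a quotient map on a closed saturated subset, where the paper cites a closed-inclusion criterion from the literature. You also compare the two path topologies through maps from simplices instead of through evaluation on $k_\Delta P_K\times\I$.
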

\begin{proof}
Since $\I$ is $\Delta$-generated, a map $\I\to K$ is continuous if and only if the same map into $Y=k_\Delta K$ is continuous. If $x\phi$ is directed in $Y$ for $\phi\in\Rep$, saturation of $X$ makes $x$ directed in $X$. Its image is contained in $K$, so it is directed in $Y$. Thus $Y$ is saturated. The canonical continuous injection $Y\to K$, with $K$ metrizable, makes $Y$ a clock.

We first compare the ordinary quotient spaces. Write $\pi_X:P_X\to Q_X$ and $\pi_K:P_K\to Q_K$ for their quotient maps. Since $K$ is compact and $|X|$ is Hausdorff, $K$ is closed in $|X|$. For each $t\in\I$, evaluation $\operatorname{ev}_t:P_X\to|X|$, $x\mapsto x(t)$, is continuous for the compact-open topology. Hence
\[
P_K=\{x\in P_X\mid x(t)\in K\text{ for every }t\in\I\}
=\bigcap_{t\in\I}\operatorname{ev}_t^{-1}(K)
\]
is closed in $P_X$. Write $i:P_K\hookrightarrow P_X$ for this closed inclusion.

Surjective reparametrization preserves the image, since
\[
(x\circ\phi)(\I)=x\bigl(\phi(\I)\bigr)=x(\I)
\qquad(\phi\in\Rep).
\]
Thus all paths in a trace class have the same image. A trace class which meets $P_K$ is therefore contained in $P_K$, and every chain of reparametrizations starting in $P_K$ stays in $P_K$. Consequently the trace relation on $P_K$ is the restriction of that on $P_X$. It follows that
\[
\iota:Q_K\longrightarrow Q_X,
\qquad \iota(\pi_K(x))=\pi_X(i(x)),
\]
is well defined and injective. The identity $\iota\pi_K=\pi_Xi$ also proves continuity of $\iota$, since $\pi_K$ is a quotient map.

Throughout the proof, $Q_K$ has its quotient topology, whereas the image $\iota(Q_K)\subseteq Q_X$ has the relative topology from $Q_X$. Since every trace class meeting $P_K$ is contained in $P_K$, we have
\[
\pi_X^{-1}\bigl(\iota(Q_K)\bigr)=i(P_K).
\]
We apply \cite[Corollary~2.6]{topological-branching} to the closed inclusion $i$ and the two trace relations. Although that corollary is stated for $\Delta$-generated spaces, it holds for ordinary topological spaces as well: its proof uses the quotient-map case of the closed-inclusion criterion in \cite[Proposition~B.14]{leftproperflow}, whose proof requires no $\Delta$-generation assumption. This criterion also appears in Lewis's thesis \cite[Appendix~A, Lemma~7.2, p.~166]{Ref_wH}. The restriction property of trace equivalence and the displayed equality verify the hypotheses of the corollary. Hence $\iota$ is a closed embedding. In particular, $\iota(Q_K)$ is closed in $Q_X$ and the map
\[
\bar\iota:Q_K\longrightarrow\iota(Q_K),
\qquad \tau\longmapsto\iota(\tau),
\]
is a homeomorphism from the quotient topology to the relative topology.

The equality above, together with the embedding properties of $i$ and $\iota$, gives the ordinary topological pullback below. Its horizontal maps are closed embeddings. Its vertical maps are quotient maps.
\[
\begin{tikzcd}[column sep=large,row sep=large]
P_K\arrow[r,hook,"i"]\arrow[d,two heads,"\pi_K"']\arrow[dr,phantom,"\lrcorner",very near start]
&P_X\arrow[d,two heads,"\pi_X"]\\
Q_K\arrow[r,hook,"\iota"']&Q_X.
\end{tikzcd}
\]

We next compare the path spaces. The relative topology on $P_K$ is also the one induced by $\Cont_\co(\I,K)$. Indeed, for a compact subset $J\subseteq\I$ and an open subset $U\subseteq|X|$, the corresponding compact-open sets satisfy
\[
[J,U]\cap P_K=[J,U\cap K]\cap P_K,
\]
and every open subset of $K$ has the form $U\cap K$. Put $A=k_\Delta P_K$. Composition with $Y\to K$ and the universal property of $k_\Delta$ give a continuous bijection $\dP(Y)(u,v)\to A$. To prove continuity of its inverse, consider evaluation
\[
\operatorname{ev}:A\times\I\longrightarrow K,
\qquad (x,t)\longmapsto x(t).
\]
It is continuous because $A\to P_K\subseteq\Cont_\co(\I,K)$ is continuous and $\I$ is locally compact Hausdorff. By Lemma~\ref{lem:interval-product}, $A\times\I$ is $\Delta$-generated. Evaluation therefore factors continuously through $Y$:
\[
\begin{tikzcd}[column sep=large,row sep=large]
A\times\I\arrow[r,"\operatorname{ev}"]\arrow[dr,dashed,"\widetilde{\operatorname{ev}}"']&K\\
&Y\arrow[u]
\end{tikzcd}
\]
By the exponential law, its adjoint is a continuous map $A\to\Cont_\co(\I,Y)$. Its values are precisely the directed paths of $Y$ from $u$ to $v$. Since $A$ is $\Delta$-generated, it factors continuously through $\dP(Y)(u,v)$. This is the inverse of the preceding bijection, proving $\dP(Y)(u,v)\cong k_\Delta P_K$.

Finally, choose a metric on $K$ and apply the normalization construction of Section~\ref{sec:normalization} to paths in $K$. For $x\in P_K$, the factorization $x=N(x)\lambda_x$ and saturation of $X$ show that $N(x)$ is directed in $X$. It has the same image and endpoints as $x$, so $N(x)\in P_K$. The continuity argument of Proposition~\ref{prop:normalization-continuous} applies to the ordinary compact-open topology on paths in $K$ and gives a continuous map $N:P_K\to P_K$. Normalization is invariant under trace equivalence, so it induces the map
\[
\begin{tikzcd}[column sep=large]
P_K\arrow[r,two heads,"\pi_K"]\arrow[dr,"N"']&Q_K\arrow[d,"j"]\\
&P_K
\end{tikzcd}
\qquad j(\pi_K(x))=N(x).
\]
Since $\pi_K$ is a quotient map and $j\pi_K=N$ is continuous, $j$ is continuous. Moreover, $N(x)$ represents the trace of $x$, so $\pi_Kj=\id_{Q_K}$. Thus $j:Q_K\to j(Q_K)$ is a homeomorphism with inverse $\pi_K|_{j(Q_K)}$, where $j(Q_K)$ has the relative topology from $P_K$. The space $P_K$ is Hausdorff as a subspace of $\Cont_\co(\I,K)$, hence $Q_K$ is Hausdorff.

Applying $k_\Delta$ to $\pi_Kj=\id_{Q_K}$ gives a continuous section of $k_\Delta\pi_K$. This implies that $k_\Delta\pi_K$ is a quotient map: if $V\subseteq k_\Delta Q_K$ has open inverse image under $k_\Delta\pi_K$, then
\[
V=(k_\Delta j)^{-1}\bigl((k_\Delta\pi_K)^{-1}(V)\bigr)
\]
is open. The functor $k_\Delta$ changes no underlying sets, so the fibres of this quotient map are still the trace classes. Under the path-space homeomorphism just proved, it is therefore a quotient of $\dP(Y)(u,v)$ by trace equivalence. By the definition of the trace topology, this proves $\dT(Y)(u,v)\cong k_\Delta Q_K$.
\end{proof}

\Needspace{7\baselineskip}
\begin{lemma}
\label{lem:compact-trace-factorization}
Let $X$ be a saturated directed space with Hausdorff underlying space, and let $K\subseteq|X|$ be a compact metrizable subspace containing $u,v$. Let $Y=k_\Delta K$ be equipped with the directed paths of $X$ whose images are contained in $K$. Let $B$ be $\Delta$-generated. Every continuous map $f:B\to\dT(X)(u,v)$ such that $\im(f(b))\subseteq K$ for all $b\in B$ factors continuously through $\dT(Y)(u,v)$. If $B$ is also compact, then $f(B)$ is closed in $\dT(X)(u,v)$.
\end{lemma}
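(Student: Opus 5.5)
The plan is to use Lemma~\ref{lem:compact-trace-subspace}, which provides the closed embedding $\iota:Q_K\to Q_X$ together with the homeomorphisms $\dT(Y)(u,v)\cong k_\Delta Q_K$ and $\dT(X)(u,v)=Q_X$. Since $\dT(X)(u,v)$ is the quotient topology of $\dP(X)(u,v)=k_\Delta P_X$, we first compare it with $Q_X$. The identity $k_\Delta P_X\to P_X$ is continuous, so it induces a continuous identity $\dT(X)(u,v)\to Q_X$.

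Let $f:B\to\dT(X)(u,v)$ satisfy the image condition. Compose $f$ with this continuous identity to get a map $B\to Q_X$. Its values are trace classes with image in $K$, so by the restriction property they lie in $\iota(Q_K)$. Because $\iota$ is an embedding onto its image with the relative topology, the corestriction gives a continuous map $g:B\to Q_K$ with $\iota g$ equal to the composite. Since $B$ is $\Delta$-generated, the universal property of $k_\Delta$ makes $g$ continuous into $k_\Delta Q_K\cong\dT(Y)(u,v)$. This map is the required factorization.

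We then check that this factorization is compatible with $f$. The map $\dT(Y)(u,v)\to\dT(X)(u,v)$ is induced by the directed map $Y\to X$, that is, by the inclusion $k_\Delta P_K\to k_\Delta P_X$, which is continuous by functoriality of $k_\Delta$. Passing to quotients gives a continuous map, and on underlying sets it sends $g(b)$ to $f(b)$. Hence $f$ factors as $B\to\dT(Y)(u,v)\to\dT(X)(u,v)$.

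For closedness when $B$ is compact, the image $g(B)$ is quasicompact in $Q_K$. By Lemma~\ref{lem:compact-trace-subspace}, $Q_K$ is Hausdorff, so $g(B)$ is closed in $Q_K$. Since $\iota$ is a closed embedding, $\iota g(B)$ is closed in $Q_X$. The identity $\dT(X)(u,v)\to Q_X$ is continuous, so the preimage $f(B)$ is closed in $\dT(X)(u,v)$.

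The main obstacle I expect is the bookkeeping between the $k_\Delta$-topologies and the ordinary quotient topologies. In particular, I must confirm that $\dT(X)(u,v)$ is finer than $Q_X$, so that continuity of the composite into $Q_X$ and the final preimage step are justified. Both follow from the fact that $k_\Delta$ refines topologies without changing underlying sets.
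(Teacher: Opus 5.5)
Your proposal is correct and follows the paper's proof essentially step for step. The shared steps are: the continuous bijection $r:\dT(X)(u,v)\to Q_X$ induced by $k_\Delta P_X\to P_X$; corestriction of $rf$ through the closed embedding $\iota$ to get $g:B\to Q_K$; $\Delta$-generation of $B$ to land in $k_\Delta Q_K\cong\dT(Y)(u,v)$; and closedness from Hausdorffness of $Q_K$ together with $f(B)=r^{-1}(\iota g(B))$.

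One small slip: your opening sentence attributes an identification $\dT(X)(u,v)=Q_X$ to Lemma~\ref{lem:compact-trace-subspace}. That lemma does not provide it, and the two topologies may differ. Your argument in fact uses only the continuous identity bijection, which is all that is needed.
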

\begin{proof}
Use the notation of Lemma~\ref{lem:compact-trace-subspace}. Since $\dP(X)(u,v)=k_\Delta P_X$, the canonical continuous map $k_\Delta P_X\to P_X$ induces a continuous bijection
\[
r:\dT(X)(u,v)\longrightarrow Q_X.
\]
The hypothesis on $f$ says that $rf(B)\subseteq\iota(Q_K)$. Hence $rf$ is continuous as a map $B\to\iota(Q_K)$ with the relative topology. Composing with the inverse of the homeomorphism $\bar\iota:Q_K\to\iota(Q_K)$ gives a continuous map $g:B\to Q_K$ and a commutative square
\[
\begin{tikzcd}[column sep=large,row sep=large]
B\arrow[r,"f"]\arrow[d,"g"']&\dT(X)(u,v)\arrow[d,"r"]\\
Q_K\arrow[r,hook,"\iota"']&Q_X.
\end{tikzcd}
\]
Since $B$ is $\Delta$-generated, $g$ factors continuously through $k_\Delta Q_K\cong\dT(Y)(u,v)$. The resulting map $B\to\dT(Y)(u,v)$ sends each $b$ to the trace represented by the same paths as $f(b)$, now regarded as paths in $Y$. Its composite with the map $\dT(Y)(u,v)\to\dT(X)(u,v)$ induced by $Y\to X$ is therefore $f$.

If $B$ is compact, $g(B)$ is compact in the Hausdorff space $Q_K$, hence closed in $Q_K$. Since $\iota$ is a closed embedding, $\iota(g(B))$ is closed in $Q_X$. The equality $rf=\iota g$ and injectivity of $r$ give
\[
f(B)=r^{-1}\bigl(\iota(g(B))\bigr).
\]
Continuity of $r$ proves that $f(B)$ is closed in $\dT(X)(u,v)$.
\end{proof}

\Needspace{9\baselineskip}
\begin{theorem}
\label{thm:metrizable-quasicompacts-traces}
For every Hausdorff saturated directed space $X$ such that every quasicompact subspace of $|X|$ is metrizable, every quotient map from directed paths with fixed endpoints to traces is a trivial q-fibration of spaces. Its trace spaces are $\Delta$-Hausdorff.
\end{theorem}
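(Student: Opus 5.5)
The plan is to verify the right lifting property of $q=q_{u,v}:\dP(X)(u,v)\to\dT(X)(u,v)$ with respect to each inclusion $\mathsf S^{n-1}\subset\mathsf D^n$. Each such lifting problem is reduced to the compact clock $Y=k_\Delta K$ of Lemma~\ref{lem:compact-trace-subspace}, where Theorem~\ref{thm:timed-traces} already gives a trivial Hurewicz fibration.

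\textbf{Setting up the lifting problem.} Consider a square given by $a:\mathsf S^{n-1}\to\dP(X)(u,v)$ and $f:\mathsf D^n\to\dT(X)(u,v)$ with $qa=f|_{\mathsf S^{n-1}}$. Lemma~\ref{lem:compact-trace-images}, applied to the compact space $B=\mathsf D^n$, shows that
\[
K=K_f\cup\{u,v\}
\]
is compact. It is quasicompact, so it is metrizable by hypothesis. The images of the paths $a(b)$ coincide with the images of their traces $f(b)$, hence they lie in $K$.

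\textbf{Passing to $Y$.} By Lemma~\ref{lem:compact-trace-subspace}, $Y$ is a saturated clock. By Lemma~\ref{lem:compact-trace-factorization}, $f$ factors continuously through a map $f_Y:\mathsf D^n\to\dT(Y)(u,v)$. We also need $a$ to factor through $\dP(Y)(u,v)\cong k_\Delta P_K$. The map $a$ is continuous into $P_X$ with values in the closed subspace $P_K$, so it is continuous into $P_K$ with the relative topology. Since $\mathsf S^{n-1}$ is $\Delta$-generated, it then factors through $k_\Delta P_K$.

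The square over $Y$ commutes: both composites send $b$ to the trace of the same paths. This uses that the trace relations on $P_K$ and $P_X$ agree, as shown in Lemma~\ref{lem:compact-trace-subspace}.

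\textbf{Lifting and mapping back.} Corollary~\ref{cor:submetrizable} applied to $Y$ (identity clock) makes $q^Y_{u,v}$ a trivial Hurewicz fibration, hence a trivial q-fibration. This gives a lift $\mathsf D^n\to\dP(Y)(u,v)$. Composing with the continuous map $\dP(Y)(u,v)\to\dP(X)(u,v)$ induced by the directed map $Y\to X$ yields the required lift, by naturality of the quotient maps.

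\textbf{$\Delta$-Hausdorffness.} Take a continuous $g:\I\to\dT(X)(u,v)$. Its image is closed by the last assertion of Lemma~\ref{lem:compact-trace-factorization} with $B=\I$. Here $\I$ is compact and $\Delta$-generated, and the compact set $K_g\cup\{u,v\}$ is metrizable.

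\textbf{Main obstacle.} The delicate point is the topological bookkeeping between $Y$ and $X$. One must check that the map $a$ on the sphere and the map $f$ on the disk both factor continuously through the $Y$-spaces, and that the maps back to $X$ are continuous. The closed-embedding statements of Lemmas~\ref{lem:compact-trace-subspace} and~\ref{lem:compact-trace-factorization} are exactly what handle this.
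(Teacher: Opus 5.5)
Your proposal is correct and follows the paper's proof essentially step for step. Each lifting problem against $\mathsf S^{n-1}\subset\mathsf D^n$ is transported to the compact saturated clock $Y=k_\Delta K_f$ by Lemmas~\ref{lem:compact-trace-images}, \ref{lem:compact-trace-subspace} and~\ref{lem:compact-trace-factorization}, solved there by Corollary~\ref{cor:submetrizable}, and pushed back to $X$; $\Delta$-Hausdorffness comes from the closedness clause of Lemma~\ref{lem:compact-trace-factorization} with $B=\I$ (adjoining $\{u,v\}$ to $K_f$ is harmless but unnecessary, since every trace image already contains both endpoints).
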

\begin{proof}
For every continuous $\gamma:\I\to\dT(X)(u,v)$, Lemma~\ref{lem:compact-trace-images} gives a compact subspace $K_\gamma$, metrizable by hypothesis. Lemma~\ref{lem:compact-trace-factorization} makes $\gamma(\I)$ closed. Thus the trace space is $\Delta$-Hausdorff, and the quotient is a map in either choice of $\Top$.

Let $a:\mathsf S^{n-1}\to\dP(X)(u,v)$ and $f:\mathsf D^n\to\dT(X)(u,v)$ satisfy $q_{u,v}a=f|_{\mathsf S^{n-1}}$, as in the outer rectangle below. Lemma~\ref{lem:compact-trace-images} gives a compact metrizable subspace $K_f$ containing $u,v$. Put $Y=k_\Delta K_f$ with the induced directed structure. Lemma~\ref{lem:compact-trace-factorization} factors $f$ through $\dT(Y)(u,v)$. Since $a(z)$ has trace $f(z)$, its image is contained in $K_f$. Thus $a$ takes values continuously in the relative space $P_{K_f}$ and, by $\Delta$-generation, in $k_\Delta P_{K_f}\cong\dP(Y)(u,v)$ (Lemma~\ref{lem:compact-trace-subspace}).
\[
\begin{tikzcd}[column sep=large,row sep=large]
\mathsf S^{n-1}\arrow[r,"a_K"]\arrow[d,hook]
&\dP(Y)(u,v)\arrow[r]\arrow[d,"q_Y"]
&\dP(X)(u,v)\arrow[d,"q_{u,v}"]\\
\mathsf D^n\arrow[r,"f_K"']\arrow[ur,dashed]
&\dT(Y)(u,v)\arrow[r]
&\dT(X)(u,v)
\end{tikzcd}
\]
The space $Y$ is a saturated clock, so $q_Y$ is a trivial Hurewicz fibration by Corollary~\ref{cor:submetrizable}, and hence a trivial q-fibration. Its lift in the left square, followed by the upper horizontal map, solves the original problem. This proves the required right lifting property for every $n\geq0$.
\end{proof}

\Needspace{9\baselineskip}
\begin{theorem}
\label{thm:second-countable-traces}
For every Hausdorff second countable saturated directed space $X$, every quotient map from directed paths with fixed endpoints to traces is a trivial q-fibration of spaces. Its trace spaces are $\Delta$-Hausdorff.
\end{theorem}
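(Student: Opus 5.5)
The plan is to deduce this statement directly from Theorem~\ref{thm:metrizable-quasicompacts-traces}. Its conclusions are exactly the ones required, so it suffices to check its single extra hypothesis: every quasicompact subspace of $|X|$ is metrizable.

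Let $K\subseteq|X|$ be quasicompact. Since $|X|$ is Hausdorff, $K$ with the relative topology is Hausdorff, hence compact. Second countability passes to subspaces: intersecting a countable base of $|X|$ with $K$ gives a countable base of $K$. A compact Hausdorff space is regular, indeed normal. Urysohn's metrization theorem then says that a second countable regular Hausdorff space is metrizable, so $K$ is metrizable.

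The only point needing care is the meaning of ``second countable'' for $|X|$. Here it is the topology of the object $|X|\in\Top$, which is already $\Delta$-generated. Subspaces are taken with the relative topology before any $\Delta$-kelleyfication, as fixed in Section~\ref{sec:convenient}, and this is exactly the convention used in Theorem~\ref{thm:metrizable-quasicompacts-traces} and Lemma~\ref{lem:compact-trace-images}. So no compatibility issue with $k_\Delta$ arises. I expect no real obstacle: the whole argument is this hypothesis check, followed by an application of Theorem~\ref{thm:metrizable-quasicompacts-traces}, which gives both the trivial q-fibration and the $\Delta$-Hausdorff trace spaces.
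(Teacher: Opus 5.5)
Your proposal is correct and is the paper's proof. Every quasicompact subspace of the Hausdorff second countable space $|X|$ is compact and second countable, hence metrizable by Urysohn's theorem, and Theorem~\ref{thm:metrizable-quasicompacts-traces} then gives both conclusions.
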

\begin{proof}
Every quasicompact subspace of $|X|$ is compact and second countable, and is therefore metrizable. The result follows from Theorem~\ref{thm:metrizable-quasicompacts-traces}.
\end{proof}

\begin{remark}
\label{rem:compact-cellular-comparison}
Theorem~\ref{thm:metrizable-quasicompacts-traces} also applies to the standard directed realization of every precubical set and to $\Sp(A)$ for every cellular $\mathcal P$-multipointed $d$-space $A$, where $\mathcal P\in\{\mathcal G,\mathcal M\}$. In both cases the underlying space is Hausdorff and every compact subset meets only finitely many cells, including cells of dimension $0$; see \cite[Proposition~1.5.2]{MR1074175} and \cite[Propositions~15--16]{Moore3}. A compact subset is therefore contained in a finite union of closed cells. This union is a Hausdorff continuous image of a finite disjoint union of compact disks, and is therefore metrizable. Hence every compact subset is metrizable. Saturation follows from Lemma~\ref{lem:cubical-saturated} and Theorem~\ref{thm:cofibrant-saturated-clock}.

For these realizations, the clock maps of Proposition~\ref{prop:cubical-timing} and Corollary~\ref{cor:globular-timed} give the stronger conclusion of a trivial Hurewicz fibration. The compact-family argument alone does not prove the homotopy lifting property for an arbitrary parameter space $Z$, since the image of $Z\times\I$ in the trace space need not be compact.

\end{remark}

\section{Functorial normalization}
\label{sec:functorial-normalization}

Fix a clock $C$, a continuous injective map $\iota:|C|\to(M,d)$ into a metric space, and a triple $\mathfrak a$ as in Notation~\ref{not:multiscale-profile}. We use the normalization and homotopies of Section~\ref{sec:normalization}.

\begin{proposition}[Naturality over a fixed clock]
\label{prop:natural-normalization}
Fix the injection $\iota$ and the triple $\mathfrak a$. If $h:(X,p)\to(Y,q)$ is a morphism in $\Timed(C)$, then
\[
h\circ N_p(x)=N_q(h\circ x),\qquad
h\circ H_p(x,s)=H_q(h\circ x,s).
\]
The lifts obtained from Proposition~\ref{prop:abstract-hurewicz} using \eqref{eq:homotopy-lift-parameter}--\eqref{eq:homotopy-lift} are also natural for these morphisms.
\[
\begin{tikzcd}[column sep=large,row sep=large]
 \dP(X)(u,v)\arrow[r,"N_p"]\arrow[d,"h_*"']&\dP(X)(u,v)\arrow[d,"h_*"]\\
 \dP(Y)(h(u),h(v))\arrow[r,"N_q"']&\dP(Y)(h(u),h(v)).
\end{tikzcd}
\]
\end{proposition}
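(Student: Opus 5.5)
The whole argument rests on one observation: the progress function depends on a directed path only through its image in the clock. Since $qh=p$, for every directed path $x$ of $X$ we have $\iota\circ q\circ(h\circ x)=\iota\circ p\circ x$. Hence, with the same injection $\iota$ and triple $\mathfrak a$,
\[
\ell_{h\circ x}(t)=L_{\iota q h x}(t)=L_{\iota p x}(t)=\ell_x(t)\qquad(t\in\I).
\]
In particular $\ell_{h\circ x}(1)=\ell_x(1)$. By Theorem~\ref{thm:timed-locally-presentable} the morphism $h$ is regular, so $h\circ x$ is constant exactly when $x$ is constant. The case distinction in the definitions of $\lambda$ and $N$ is therefore the same on both sides. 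For non-constant $x$ we get $\lambda_{h\circ x}=\lambda_x$, and for constant $x$ both are $\id_\I$ by convention.

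Next we prove the identity for $N$. Compose the factorization \eqref{eq:timed-factorization} with $h$:
\[
h\circ x=(h\circ N_p(x))\circ\lambda_x=(h\circ N_p(x))\circ\lambda_{h\circ x}.
\]
Since $\lambda_{h\circ x}$ is a quotient map, the factorization $h\circ x=N_q(h\circ x)\circ\lambda_{h\circ x}$ is unique, which gives $h\circ N_p(x)=N_q(h\circ x)$. For constant $x$ both sides equal the constant path $h\circ x$. The homotopy identity then follows by substitution into \eqref{eq:timed-homotopy}, since both $N$ and $\lambda$ are transported along $h$:
\[
h\circ H_p(x,s)=N_q(h\circ x)\circ\bigl((1-s)\lambda_{h\circ x}+s\id_\I\bigr)=H_q(h\circ x,s).
\]
The commutative square in the statement is just the first identity, read on $\dP(X)(u,v)$ with $h_*$ the post-composition map.

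For the lifts, write $j_X$ and $j_Y$ for the sections of Theorem~\ref{thm:timed-traces}, and $h_\sharp:\dT(X)(u,v)\to\dT(Y)(h(u),h(v))$ for the induced map on traces. It is well defined because $h\circ(x\phi)=(h\circ x)\phi$. From $j_X q=N_p$ and the first identity we obtain $h_*j_X=j_Yh_\sharp$. The parameter \eqref{eq:homotopy-lift-parameter} depends only on $\ell_x$, so $\eta_{h\circ x,s}=\eta_{x,s}$, and $D$ vanishes at $(x,s)$ exactly when it vanishes at $(h\circ x,s)$. Hence
\[
h\circ K_X(x,\tau,s)=j_Y(h_\sharp\tau)\circ\eta_{x,s}=K_Y(h\circ x,h_\sharp\tau,s),
\]
and at the degenerate points both sides are $h\circ x$. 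Substituting into \eqref{eq:abstract-homotopy-lift} gives $h\circ\widetilde B=\widetilde{h_\sharp B}$ for the lift of any homotopy $B$ with initial family $a$ replaced by $h_*a$.

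No step is deep. The only point needing care is the bookkeeping at constant paths, where $\lambda$ is defined by convention and not by the formula; regularity of $h$ is exactly what makes that convention compatible on both sides.
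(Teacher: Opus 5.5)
Your proposal is correct and follows the paper's proof essentially step by step: equality of progress functions from $qh=p$, regularity of $h$ to match the constant-path convention, uniqueness of the factorization through $\lambda_{h\circ x}$, substitution into $H$, and equality of $\eta_{x,s}$ to make $K$, and hence the lifts, natural. You are somewhat more explicit than the paper about the intertwining $h_*j_X=j_Yh_\sharp$ and the degenerate points of $D_q$.
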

\begin{proof}
The equality $qh=p$ makes the progress functions identical:
\[
\ell_{h\circ x}(t)=L_{\iota\circ q\circ h\circ x}(t)=L_{\iota\circ p\circ x}(t)=\ell_x(t).
\]
Since $h$ is regular by Theorem~\ref{thm:timed-locally-presentable}, the same case of the definition of $\lambda$ applies on both sides, and $\lambda_x=\lambda_{h\circ x}$. Uniqueness in \eqref{eq:timed-factorization} proves the first equality. For the second, substitution in \eqref{eq:timed-homotopy} gives
\[
\begin{aligned}
h\circ H_p(x,s)
&=(h\circ N_p(x))\circ\bigl((1-s)\lambda_x+s\id_\I\bigr)\\
&=N_q(h\circ x)\circ\bigl((1-s)\lambda_{h\circ x}+s\id_\I\bigr)
=H_q(h\circ x,s).
\end{aligned}
\]
The first equality also makes the sections on trace spaces commute with $h$. The equality of progress functions makes $\eta_{x,s}$ and $\eta_{h\circ x,s}$ identical, so the maps $K$ in \eqref{eq:homotopy-lift} commute with $h$. Formula~\eqref{eq:abstract-homotopy-lift} proves naturality of the lifts.
\end{proof}

\begin{remark}[Natural splitting]
\label{rem:natural-splitting}
Let $\mathcal B_C$ be the category whose objects are triples $(p:X\to C,u,v)$ with $u,v\in|X|$ and whose morphisms are morphisms in $\Timed(C)$ preserving the two chosen points. The assignments
\[
\mathsf P(p,u,v)=\dP(X)(u,v),
\qquad \mathsf T(p,u,v)=\dT(X)(u,v)
\]
define functors $\mathsf P,\mathsf T:\mathcal B_C\to\Top$. By Theorem~\ref{thm:timed-traces} and Proposition~\ref{prop:natural-normalization}, the quotient maps and their normalization sections define natural transformations
\[
\begin{tikzcd}[column sep=large]
\mathsf P\arrow[r,shift left=1ex,"q"]
&\mathsf T\arrow[l,shift left=1ex,"j"]
\end{tikzcd}
\qquad qj=\id_{\mathsf T},\qquad jq=N,\qquad N^2=N.
\]
Thus $\mathsf T$ splits the idempotent natural transformation $N$ of $\mathsf P$. Moreover, the homotopy $H:\mathsf P\times\I\to\mathsf P$ is natural and satisfies
\[
H(-,0)=\id_{\mathsf P},\qquad H(-,1)=jq,
\qquad qH(x,s)=q(x),\qquad H(j(\tau),s)=j(\tau).
\]
Consequently the trace functor, identified with the image of $j$, is a strong deformation retract of the path functor. These retractions and homotopies are compatible with every diagram in $\mathcal B_C$.
\end{remark}

\begin{remark}[The category of normalized paths]
\label{rem:normalized-category}
For a timed space $p:X\to C$, let $\Nat_p(X)$ have the points of $|X|$ as objects and the spaces $\Nat_p(X)(u,v)$ as morphism spaces. For normalized paths $a:u\to v$ and $b:v\to w$, define
\[
a\star b=N_p(a*_N b).
\]
Concatenation and normalization are continuous, so this composition is continuous for products in $\Top$. Trace equivalence is compatible with concatenation by Lemma~\ref{lem:trace-splitting}, and normalization is invariant under trace equivalence by Lemma~\ref{lem:normalization-identities}. The two parenthesizations of concatenation differ by an increasing piecewise linear reparametrization. Hence
\[
\begin{aligned}
(a\star b)\star c
&=N_p\bigl(N_p(a*_N b)*_N c\bigr)
=N_p\bigl((a*_N b)*_N c\bigr)\\
&=N_p\bigl(a*_N(b*_N c)\bigr)
=N_p\bigl(a*_N N_p(b*_N c)\bigr)
=a\star(b\star c).
\end{aligned}
\]
The constant paths are identities, since $c_u*_N a\sim a\sim a*_N c_v$. Thus $\Nat_p(X)$ is a category enriched in $\Top$. Give the trace spaces the composition $[a][b]=[a*_N b]$. The homeomorphisms of Theorem~\ref{thm:timed-traces} preserve composition and identities, and therefore identify $\Nat_p(X)$ with the trace category $\dT(X)$ as categories enriched in $\Top$.

For a morphism $h:(X,p)\to(Y,q)$, Proposition~\ref{prop:natural-normalization} gives
\[
h(a\star b)=N_q\bigl(h(a)*_N h(b)\bigr)=h(a)\star h(b).
\]
Consequently normalization defines a functor from $\Timed(C)$ to categories enriched in $\Top$, naturally isomorphic to the trace-category functor. This construction does not require additivity of the progress function.
\end{remark}

\begin{remark}[An action of the interval monoid]
\label{rem:interval-monoid-action}
Fix $p:X\to C$ and endpoints $u,v$, and write $P=\dP(X)(u,v)$ and $H_s(x)=H_p(x,s)$. Lemma~\ref{lem:normalization-identities} and formula~\eqref{eq:timed-homotopy} give
\[
N_p(H_s(x))=N_p(x),
\qquad \lambda_{H_s(x)}=(1-s)\lambda_x+s\id_\I.
\]
The first identity follows from invariance of normalization under reparametrization. For the second, suppose that $x$ is non-constant and put $L=\ell_x(1)>0$. The factorization $x=N_p(x)\circ\lambda_x$ and equivariance \eqref{eq:timed-equivariance} give
\[
\ell_{N_p(x)}\circ\lambda_x=\ell_x=L\lambda_x.
\]
Since $\lambda_x:\I\to\I$ is surjective, it follows that
\[
\ell_{N_p(x)}(r)=Lr\qquad(r\in\I).
\]
Thus the progress function of the normalized path is linear. Put $\sigma_s(t)=(1-s)\lambda_x(t)+st$. Then $\sigma_s\in\Rep$ and $H_s(x)=N_p(x)\circ\sigma_s$, so equivariance gives, for every $t\in\I$,
\[
\begin{aligned}
\ell_{H_s(x)}(t)
&=\ell_{N_p(x)}\bigl(\sigma_s(t)\bigr)\\
&=L\sigma_s(t)\\
&=L\bigl((1-s)\lambda_x(t)+st\bigr).
\end{aligned}
\]
In particular, $\ell_{H_s(x)}(1)=L$. Therefore
\[
\lambda_{H_s(x)}(t)
=\frac{\ell_{H_s(x)}(t)}{\ell_{H_s(x)}(1)}
=(1-s)\lambda_x(t)+st.
\]
For constant $x$, the path $H_s(x)=x$ is constant and $\lambda_{H_s(x)}=\lambda_x=\id_\I$, so the second identity holds as well. It follows that
\[
\begin{aligned}
H_t(H_s(x))
&=N_p(x)\circ\bigl((1-t)((1-s)\lambda_x+s\id_\I)+t\id_\I\bigr)\\
&=N_p(x)\circ\bigl((1-s)(1-t)\lambda_x+(s+t-st)\id_\I\bigr)\\
&=H_{s+t-st}(x).
\end{aligned}
\]
Put $D(x,a)=D_a(x)=H_p(x,1-a)$. Then
\[
D_aD_b=D_{ab},\qquad D_1=\id_P,\qquad D_0=N_p.
\]
Thus multiplication $\mu(a,b)=ab$ makes $\I$ a topological monoid acting continuously on $P$:
\[
\begin{tikzcd}[column sep=large,row sep=large]
P\times\I\times\I\arrow[r,"D\times\id_\I"]\arrow[d,"\id_P\times\mu"']
&P\times\I\arrow[d,"D"]\\
P\times\I\arrow[r,"D"']&P.
\end{tikzcd}
\]
Its fixed points are precisely the normalized paths: they are fixed by every $D_a$, and a point fixed by $D_0=N_p$ is normalized. Proposition~\ref{prop:natural-normalization} makes the maps induced by morphisms over $C$ equivariant for these actions.
\end{remark}

%\bibliographystyle{../plainurlwithoutprefixDOI}
%\bibliography{../Bibliotheque}

\end{document}